\documentclass[12pt,a4paper]{amsart}
\usepackage[a4paper,top=2.5cm,bottom=2.5cm,left=3cm,right=3cm,heightrounded]{geometry}
\usepackage{microtype}
\usepackage[T1]{fontenc}
\usepackage{lmodern}
\usepackage{xcolor}

\usepackage{mathtools,amssymb,amsthm}
\numberwithin{equation}{section}

\usepackage{graphicx}
\graphicspath{{./}{./figures/}}
\usepackage{booktabs}
\usepackage{array}
\usepackage{enumitem}
\usepackage{siunitx}

\usepackage{tikz}
\usetikzlibrary{calc,positioning,shapes.geometric,decorations.pathmorphing,patterns,arrows.meta}

\usepackage{algorithm}
\usepackage[noend]{algpseudocode}

\usepackage{aliascnt}

\newtheorem{theorem}{Theorem}[section]
\newtheorem{maintheorem}{Theorem}

\newaliascnt{maincorollary}{maintheorem}
\newtheorem{maincorollary}[maincorollary]{Corollary}
\aliascntresetthe{maincorollary}

\newaliascnt{proposition}{theorem}
\newtheorem{proposition}[proposition]{Proposition}
\aliascntresetthe{proposition}

\newaliascnt{lemma}{theorem}
\newtheorem{lemma}[lemma]{Lemma}
\aliascntresetthe{lemma}

\newaliascnt{corollary}{theorem}
\newtheorem{corollary}[corollary]{Corollary}
\aliascntresetthe{corollary}

\theoremstyle{definition}
\newaliascnt{definition}{theorem}
\newtheorem{definition}[definition]{Definition}
\aliascntresetthe{definition}

\theoremstyle{remark}
\newaliascnt{remark}{theorem}
\newtheorem{remark}[remark]{Remark}
\aliascntresetthe{remark}



\makeatletter
\renewcommand{\subsection}{%
  \@startsection{subsection}{2}{\z@}%
    {1.5ex \@plus 1ex \@minus .2ex}%
    {.6ex \@plus .2ex}%
    {\normalfont\bfseries}%
}
\makeatother

\usepackage[hidelinks]{hyperref}
\usepackage{bookmark}
\DeclareRobustCommand{\doi}[1]{\href{https://doi.org/#1}{\nolinkurl{doi:#1}}}
\DeclareRobustCommand{\arxiv}[1]{\href{https://arxiv.org/abs/#1}{\nolinkurl{arXiv:#1}}}
\DeclareRobustCommand{\weblink}[2]{\href{#2}{\nolinkurl{#1}}}
\usepackage{orcidlink}
\usepackage[capitalise,noabbrev]{cleveref}

\crefname{theorem}{Theorem}{Theorems}
\Crefname{theorem}{Theorem}{Theorems}
\crefname{maintheorem}{Theorem}{Theorems}
\Crefname{maintheorem}{Theorem}{Theorems}
\crefname{maincorollary}{Corollary}{Corollaries}
\Crefname{maincorollary}{Corollary}{Corollaries}
\crefname{proposition}{Proposition}{Propositions}
\Crefname{proposition}{Proposition}{Propositions}
\crefname{lemma}{Lemma}{Lemmas}
\Crefname{lemma}{Lemma}{Lemmas}
\crefname{corollary}{Corollary}{Corollaries}
\Crefname{corollary}{Corollary}{Corollaries}
\crefname{definition}{Definition}{Definitions}
\Crefname{definition}{Definition}{Definitions}
\crefname{remark}{Remark}{Remarks}
\Crefname{remark}{Remark}{Remarks}
\crefname{section}{Section}{Sections}
\Crefname{section}{Section}{Sections}
\crefname{figure}{Figure}{Figures}
\Crefname{figure}{Figure}{Figures}
\crefname{table}{Table}{Tables}
\Crefname{table}{Table}{Tables}

\numberwithin{algorithm}{section}
\crefname{algorithm}{Algorithm}{Algorithms}
\Crefname{algorithm}{Algorithm}{Algorithms}

\newcommand{\runinsubsection}[1]{%
  \par\medskip
  \noindent\textbf{#1.}\kern0.1em
}

\hypersetup{
  pdftitle={Finite capture and the closure of roots of restricted polynomials},
  pdfauthor={Bernat Espigule, David Juher},
  pdfsubject={Mathematics; Topological Dynamics, Fractal Geometry, Algebraic Number Theory},
  pdfkeywords={roots of restricted polynomials, power series, zero set, connectedness locus, self-similar sets, iterated function systems, stratification, Mandelbrot set, collinear fractals}
}

\definecolor{LevelZero}{gray}{0.94}
\definecolor{LevelOne}{gray}{0.84}
\definecolor{LevelTwo}{gray}{0.72}
\definecolor{LevelEdge}{gray}{0.20}

\colorlet{Omega0Fill}{LevelZero}
\colorlet{Omega0Line}{LevelEdge!85!black}
\colorlet{Omega1Fill}{LevelOne}
\colorlet{Omega1Line}{LevelEdge!85!black}
\colorlet{Omega2Fill}{LevelTwo}
\colorlet{Omega2Line}{LevelEdge!90!black}

\newcommand{\MaybeIncludeGraphic}[3][]{%
  \IfFileExists{figures/#2}{\includegraphics[#1]{#2}}{%
    \IfFileExists{#2}{\includegraphics[#1]{#2}}{%
      \fbox{%
        \begin{minipage}[c][0.28\textheight][c]{0.88\linewidth}
          \centering
          \textbf{Figure placeholder}\\[0.4em]
          The source file \texttt{\detokenize{#2}} is not present in this source bundle.\\[0.3em]
          #3
        \end{minipage}}%
    }%
  }%
}

\tikzset{
  trap0/.style    ={fill=white, draw=black, line width=1.2pt},
  trap1/.style    ={fill=white, draw=black, pattern=north east lines, pattern color=black, line width=0.9pt},
  trap2/.style    ={fill=white, draw=black, pattern=crosshatch, pattern color=black, line width=0.8pt},
  cover1/.style   ={fill=white, draw=black, pattern=north east lines, pattern color=gray!55, line width=0.5pt},
  cover2/.style   ={fill=white, draw=black, pattern=dots, pattern color=gray!55, line width=0.5pt},
  outline/.style  ={draw=black, line width=0.9pt},
  faint/.style    ={draw=gray!70, line width=0.5pt, dashed},
  axis/.style     ={->, black, line width=0.6pt},
  axislabel/.style={font=\footnotesize, inner sep=1pt, fill=white, draw=none, text=black},
  orbitInt/.style       ={draw=black, line width=1.2pt},
  orbitEsc/.style       ={draw=black, line width=1.2pt, dash pattern=on 4pt off 3pt},
  orbitPoint/.style     ={draw=black, fill=white, line width=0.7pt},
  orbitPointFinal/.style={draw=black, fill=black, line width=0.7pt},
  enclosure/.style      ={draw=black, line width=0.9pt, dash pattern=on 3pt off 2pt, fill=gray!20, fill opacity=0.15}
}

\newcommand{\C}{\mathbb{C}}
\newcommand{\R}{\mathbb{R}}
\newcommand{\Z}{\mathbb{Z}}
\newcommand{\N}{\mathbb{N}}

\newcommand{\D}{\overline{\mathbb{D}}}
\newcommand{\Dbar}{\D}

\newcommand{\Mn}{\mathcal{M}_n}
\newcommand{\Xn}{\mathcal{X}_n}

\newcommand{\PTrap}{\mathcal{C}}
\newcommand{\PEnc}{\mathcal{E}}

\newcommand{\Dn}{D_n}
\newcommand{\Rn}{\mathcal{R}_n}

\newcommand{\Real}{\operatorname{Re}}
\newcommand{\Imag}{\operatorname{Im}}
\newcommand{\interior}{\operatorname{int}}
\newcommand{\closure}[1]{\overline{#1}}

\DeclareMathOperator{\dist}{dist}
\DeclareMathOperator{\sgn}{sgn}

\newcommand{\lv}{\ell_v}
\newcommand{\ls}{\ell_s}
\newcommand{\VE}{\mathcal{V}_{\mathcal{E}}}
\newcommand{\SE}{\mathcal{S}_{\mathcal{E}}}
\newcommand{\Av}{\mathcal A_v}

\newcommand{\Hutch}{\mathcal{H}}

\newcommand{\Interior}{\textup{\textsc{Interior}}}
\newcommand{\Exterior}{\textup{\textsc{Exterior}}}
\newcommand{\Undetermined}{\textup{\textsc{Undetermined}}}
\newcommand{\emptyword}{\epsilon}

\newcommand{\DrawTrapP}[4][]{
  \begingroup
  \pgfmathsetmacro{\cx}{#2}
  \pgfmathsetmacro{\cy}{#3}
  \pgfmathsetmacro{\nnum}{#4}
  \pgfmathsetmacro{\NNum}{2*\nnum - 1}
  \pgfmathsetmacro{\rho}{sqrt(\cx*\cx + \cy*\cy)}
  \pgfmathsetmacro{\xabs}{abs(\cx)}
  \pgfmathsetmacro{\yabs}{abs(\cy)}
  \pgfmathsetmacro{\Vpar}{(\NNum - 2*\xabs)*\yabs/(\rho*\rho)}
  \pgfmathsetmacro{\Spar}{\NNum * \yabs/\rho}
  \pgfmathsetmacro{\invY}{1/(\cy)}
  \coordinate (C1) at ({(\rho*\invY)*(-\Spar) - (\cx*\invY)*(-\Vpar)}, {-\Vpar});
  \coordinate (C2) at ({(\rho*\invY)*(\Spar) - (\cx*\invY)*(-\Vpar)}, {-\Vpar});
  \coordinate (C3) at ({(\rho*\invY)*(\Spar) - (\cx*\invY)*(\Vpar)}, {\Vpar});
  \coordinate (C4) at ({(\rho*\invY)*(-\Spar) - (\cx*\invY)*(\Vpar)}, {\Vpar});
  \filldraw[#1] (C1) -- (C2) -- (C3) -- (C4) -- cycle;
  \endgroup
}

\newcommand{\DrawPone}[4][]{
  \begingroup
  \pgfmathsetmacro{\cx}{#2}
  \pgfmathsetmacro{\cy}{#3}
  \pgfmathsetmacro{\nnum}{#4}
  \pgfmathsetmacro{\NNum}{2*\nnum - 1}
  \pgfmathsetmacro{\rhosq}{(\cx*\cx + \cy*\cy)}
  \pgfmathsetmacro{\aoneoverc}{\cx/(\rhosq)}
  \pgfmathsetmacro{\boneoverc}{-\cy/(\rhosq)}
  \pgfmathtruncatemacro{\M}{(\NNum - 1)/2}
  \foreach \s in {-\M,...,\M}{
    \pgfmathtruncatemacro{\t}{2*\s}
    \begin{scope}[shift={(\t,0)}, cm={\aoneoverc,\boneoverc,-\boneoverc,\aoneoverc,(0,0)}]
      \DrawTrapP[#1]{#2}{#3}{#4}
    \end{scope}
  }
  \endgroup
}

\makeatletter
\newcommand{\DrawPlevel}[5][]{
  \begingroup
  \pgfmathtruncatemacro{\kk}{#2}
  \expandafter\DrawPlevel@\expandafter{\kk}{#1}{#3}{#4}{#5}
  \endgroup
}
\newcommand{\DrawPlevel@}[5]{
  \ifnum#1=0\relax
    \DrawTrapP[#2]{#3}{#4}{#5}
  \else
    \pgfmathsetmacro{\NNum}{2*#5 - 1}
    \pgfmathsetmacro{\rhosq}{(#3*#3 + #4*#4)}
    \pgfmathsetmacro{\aoneoverc}{#3/(\rhosq)}
    \pgfmathsetmacro{\boneoverc}{-(#4)/(\rhosq)}
    \pgfmathtruncatemacro{\M}{(\NNum - 1)/2}
    \foreach \s in {-\M,...,\M}{
      \pgfmathtruncatemacro{\t}{2*\s}
      \begin{scope}[shift={(\t,0)}, cm={\aoneoverc,\boneoverc,-\boneoverc,\aoneoverc,(0,0)}]
        \expandafter\DrawPlevel@\expandafter{\numexpr#1-1\relax}{#2}{#3}{#4}{#5}
      \end{scope}
    }
  \fi
}
\makeatother

\newcommand{\DrawEnv}[5][]{
  \begingroup
  \pgfmathsetmacro{\cx}{#2}
  \pgfmathsetmacro{\cy}{#3}
  \pgfmathsetmacro{\rho}{sqrt(\cx*\cx + \cy*\cy)}
  \pgfmathsetmacro{\Vpar}{#4}
  \pgfmathsetmacro{\Spar}{#5}
  \pgfmathsetmacro{\invY}{1/(\cy)}
  \coordinate (E1) at ({(\rho*\invY)*(-\Spar) - (\cx*\invY)*(-\Vpar)}, {-\Vpar});
  \coordinate (E2) at ({(\rho*\invY)*( \Spar) - (\cx*\invY)*(-\Vpar)}, {-\Vpar});
  \coordinate (E3) at ({(\rho*\invY)*( \Spar) - (\cx*\invY)*( \Vpar)}, {\Vpar});
  \coordinate (E4) at ({(\rho*\invY)*(-\Spar) - (\cx*\invY)*( \Vpar)}, {\Vpar});
  \filldraw[#1] (E1) -- (E2) -- (E3) -- (E4) -- cycle;
  \endgroup
}

\title[Restricted-root closures and finite capture]{Finite capture and the closure of roots of restricted polynomials}

\author[B. Espigule]{Bernat Espigule\,\orcidlink{0000-0002-8036-7851}}
\thanks{Corresponding author: \href{mailto:bernat@espigule.com}{bernat@espigule.com}.}
\address{Department of Computer Science, Applied Mathematics and Statistics, Universitat de Girona, 17003 Girona, Catalonia, Spain}
\email{bernat@espigule.com}

\author[D. Juher]{David Juher\,\orcidlink{0000-0001-5440-1705}}
\address{Department of Computer Science, Applied Mathematics and Statistics, Universitat de Girona, 17003 Girona, Catalonia, Spain}
\email{david.juher@udg.edu}

\subjclass[2020]{Primary 37F20, 28A80; Secondary 37M25, 65G20}
\keywords{roots of restricted polynomials, power series, zero set, connectedness locus, self-similar sets, iterated function systems, stratification, Mandelbrot set, collinear fractals}

\date{}

\begin{document}
\begin{abstract}
We study how a countable algebraic root set passes to a fractal connectedness locus. Let $\Dn=\{-n+1,-n+2,\ldots,n-1\}$, and let $\Rn$ be the set of roots of monic polynomials whose non-leading coefficients lie in $\Dn$. We study $\overline{\Rn}\setminus\D$. Outside the closed unit disk this set equals a connectedness locus $\Mn$ for a collinear affine iterated function system, or equivalently the zero set of reciprocal power series $1+\sum_{k\ge1} d_k c^{-k}$ with $d_k\in\Dn$. For non-real parameters in the lens $\Xn=\{\,c\in\C\setminus\D:\ |c\pm1|<\sqrt{2n}\,\}$ we construct a canonical trap and enclosure for the associated difference attractor and use them to define finite-capture sets $\Theta_k(n)$ for the marked point $2c$. Our main result is the uniform inclusion $\overline{\Theta_k(n)}\cap(\Xn\setminus\R)\subset\Theta_{k+2}(n)$ for every $k\ge0$. Consequently, $(\Mn\cap\Xn)\setminus\R$ is exactly the closure of the finite-capture locus. The paper combines explicit trap geometry with certified inverse search. Moreover, $\Mn\setminus\R\subset\Xn$ for every $n\ge20$, and this is sharp for $2\le n\le19$. Thus, for $n\ge20$, the non-real part of $\overline{\Rn}\setminus\D$ is exactly the closure of the finite-capture locus.
\end{abstract}

\maketitle

\section{Introduction}

Roots of polynomials with coefficients in a finite set often produce intricate loci in the complex plane. In the classical $\{-1,0,1\}$ case the closure coincides with the \emph{Mandelbrot set} for a pair of linear maps~\cite{Barnsley1985AMaps,Bousch1993ConnexiteFonctions,Bandt2002OnMaps,SolomyakXu2003,ShmerkinSolomyak2006,Calegari2017RootsConjecture}. For the root set, membership is finite by construction; the difficult part is the closure.

The principal contribution of this paper is the formulation of a finite mechanism that accounts for that transition. For the specific family considered here, the closure of the set of roots is exactly a connectedness locus for a collinear affine iterated function system, and membership in that locus is controlled by a single marked point in the corresponding difference attractor. On a natural two-disk lens in parameter space we construct a canonical trap and enclosure and use them to define finite-capture sets for that marked point.

The main new ingredient is a bounded-delay closure theorem. Exact membership in the root set is equivalent to a finite backward iterate of the marked point landing at $0$. For the closure, exact landing is too rigid. We replace it by finite entry into the trap and prove that every limit of depth-$k$ capture parameters is captured after at most two additional inverse steps. This yields an exact description of the non-real part of the closure inside the lens and, after a sharp threshold theorem, globally for every $n\ge20$.

Now fix an integer $n\ge2$. Let
\[
    \Dn:=\{-n+1,-n+2,\dots,n-1\},
\]
and let $\Rn$ be the set of roots of monic polynomials whose non-leading coefficients lie in $\Dn$:
\[
    \Rn:=\Bigl\{\,c\in\C:\ c^m+\sum_{j=0}^{m-1} d_jc^j=0,\ d_j\in\Dn,\ m\ge1\,\Bigr\}.
\]
We study the closure of $\Rn$ outside the closed unit disk $\D$. By \Cref{prop:Mn-closure-Rn,cor:Mn-power-series}, this is equivalently the zero locus of reciprocal power series
\[
    1+\sum_{k=1}^{\infty} d_k c^{-k}\qquad (d_k\in\Dn).
\]
Thus, the problem can be phrased either algebraically, in terms of restricted roots, or analytically, in terms of zeros of restricted reciprocal series.

The dynamical dictionary begins with the digit set
\[
    A_n:=\{-n+1,-n+3,\dots,n-1\},
\]
and the affine maps
\[
    f_t(z)=t+\frac{z}{c}\qquad (t\in A_n,\ |c|>1).
\]
Let $E(c,n)$ be the attractor of the corresponding iterated function system and define the connectedness locus
\[
    \Mn:=\{\,c\in\C\setminus\D:E(c,n)\text{ is connected}\,\}.
\]
A basic fact proved in \Cref{prop:Mn-closure-Rn} is that
\begin{equation}\label{eq:intro-root-closure}
    \Mn=\overline{\Rn}\setminus\D.
\end{equation}
Thus the closure of a countable algebraic root set is literally the connectedness locus of a collinear affine IFS.

The key reduction comes from the difference attractor. Writing $N:=2n-1$ and $A_N:=A_n-A_n$, let $E(c,N)$ denote the attractor of the associated difference system. Then connectedness is equivalent to a one-point condition:
\begin{equation}\label{eq:intro-membership}
    c\in\Mn\quad\Longleftrightarrow\quad 2c\in E(c,N),
\end{equation}
see \Cref{prop:Mn_characterization}. In particular, the geometry of $\overline{\Rn}\setminus\D$ is determined by the inverse dynamics of the single marked point $2c$. By \Cref{prop:dyn-poly}, a parameter $c\in\C\setminus\D$ belongs to $\Rn$ if and only if some finite backward iterate of $2c$ under the inverse branches $g_t(z):=c(z-t)=f_t^{-1}(z)$ lands at $0$. Exact landing therefore describes $\Rn$ itself, but it is too rigid to control the boundary. Our replacement is finite capture into an explicit interior region.

That region is available on the two-disk lens
\[
    \Xn=\{\,c\in\C\setminus\D:\ |c\pm1|<\sqrt{2n}\,\}.
\]
For every non-real $c\in\Xn$ we construct a canonical trap $\PTrap(c,N)$ and a canonical enclosure $\PEnc(c,N)$ such that
\[
    \PTrap(c,N)\subset \interior E(c,N)\subset E(c,N)\subseteq \PEnc(c,N).
\]
Finite backward entry of $2c$ into the trap defines the finite-capture sets
\[
    \Theta_k(n):=\Bigl\{\,c\in\Xn\setminus\R:\ \exists\,u\in A_N^{\le k}\text{ such that }g_u(2c)\in\PTrap(c,N)\,\Bigr\},
\]
and the finite-capture locus
\[
    \Theta_n:=\bigcup_{k\ge0}\Theta_k(n).
\]
Thus $\Theta_n$ is a dynamical thickening of the exact finite-landing locus; see \Cref{fig:M3zoom}. Since $0\in\PTrap(c,N)$ throughout $\Xn\setminus\R$, every restricted root in the lens belongs to $\Theta_n$.

\begin{figure}[!htbp]
    \centering
    \MaybeIncludeGraphic[width=\linewidth,height=0.82\textheight,keepaspectratio,trim=8mm 7mm 9mm 7mm,clip]{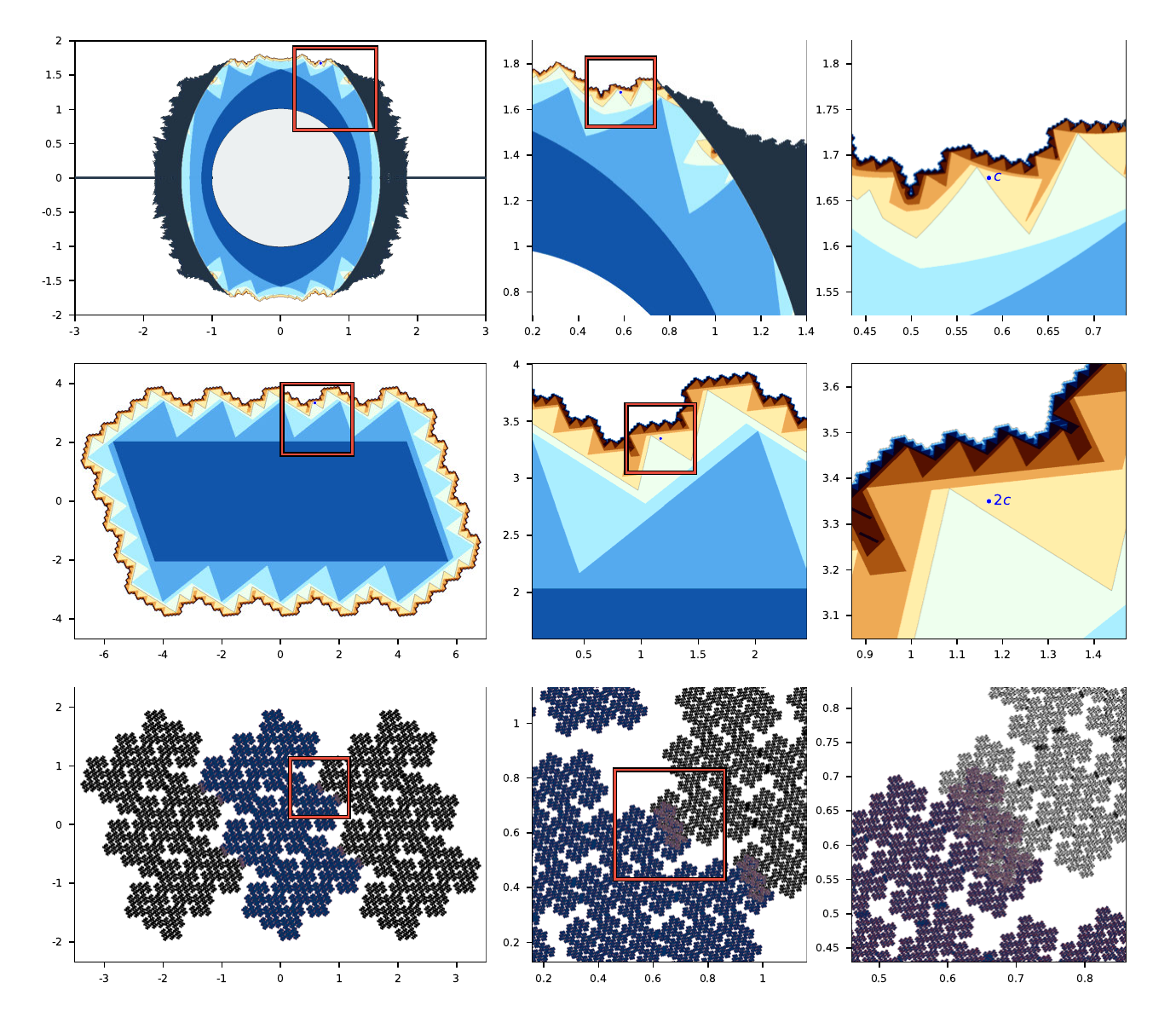}{Finite-capture filtration for $n=3$ and an interior zoom of the connectedness locus.}
    \caption{Finite-capture filtration for $n=3$ ($N=5$).
        Top: connectedness locus $\Mn$.
        Middle: difference attractor $E(c,5)$ for $c=0.7+1.4i$ with $2c$ captured at depth $4$.
        Bottom: original attractor $E(c,3)$.}
    \label{fig:M3zoom}
\end{figure}

\runinsubsection{Main results}
The first theorem is the key closure statement.

\begin{maintheorem}[Two-step closure theorem]\label{thm:bounded-delay-closure}
    Let $n\ge2$. For every $k\ge0$, the finite-capture sets satisfy
    \[
        \overline{\Theta_k(n)}\cap(\Xn\setminus\R)\subset \Theta_{k+2}(n).
    \]
\end{maintheorem}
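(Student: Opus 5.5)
Take $c_0\in\overline{\Theta_k(n)}\cap(\Xn\setminus\R)$ and a sequence $c_j\to c_0$ with $c_j\in\Theta_k(n)$. Since $A_N^{\le k}$ is finite, pass to a subsequence on which a single word $u$ of length at most $k$ satisfies $g_u^{c_j}(2c_j)\in\PTrap(c_j,N)$. The point $z_j:=g_u^{c_j}(2c_j)$ is a polynomial in $c_j$, so it converges to $z_0:=g_u^{c_0}(2c_0)$. The trap $\PTrap(c,N)$ is an explicit parallelogram (the four vertices in the construction) whose vertices depend continuously on $c$ in $\Xn\setminus\R$: the parameters $V,S$ involve $|\Imag c|$ and $|c|$, and $\Imag c_0\ne0$. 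Hence $z_0\in\overline{\PTrap(c_0,N)}$. The task is therefore a statement at the single parameter $c_0$: every point of the closed trap is carried into the open trap (or whatever exact form $\PTrap$ takes) by at most two further inverse steps $g_t$, $t\in A_N$. This gives $g_{ut_1t_2}(2c_0)\in\PTrap(c_0,N)$ with a word of length at most $k+2$, so $c_0\in\Theta_{k+2}(n)$.

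The heart of the proof is thus a uniform lemma. For every non-real $c\in\Xn$ and every $z\in\partial\PTrap(c,N)$ there is a word $w\in A_N^{\le2}$ with $g_w(z)\in\PTrap(c,N)$. For $z$ in the (open) trap itself the empty word suffices. I would prove the lemma from the self-covering property that presumably underlies $\PTrap\subset\interior E(c,N)$, namely
\[
\PTrap(c,N)\subset\bigcup_{t\in A_N} f_t\bigl(\PTrap(c,N)\bigr),
\]
which is the level-one picture drawn by \verb|\DrawPone|. Applying $g_t$ to a point covered by $f_t(\PTrap)$ puts it in $\PTrap$. The difficulty is boundary points: a point of $\partial\PTrap$ may lie only on boundaries of the pieces $f_t(\overline{\PTrap})$. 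I would classify boundary points by their position. Points on the two long sides (parallel to the real axis, at height $\pm V$) should lie strictly inside some translate $f_t(\PTrap)$, because the images overlap with positive margin in the imaginary direction as long as $|c\pm1|<\sqrt{2n}$; this gives one step. The remaining points are the slanted sides and the vertices, together with points on the seams between adjacent images $f_t(\PTrap)$ and $f_{t+2}(\PTrap)$. For these, one application of $g_t$ lands on a boundary segment of a specific type, for instance a slanted side mapping into a horizontal side. A second step then finishes. This is why the bound is $2$ rather than $1$.

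The main obstacle is making this boundary bookkeeping uniform over all of $\Xn\setminus\R$ and all $n\ge2$. In particular, I must verify the strict inequalities giving interior overlap, which should reduce to $|c\pm1|^2<2n$ together with $|c|>1$ and the explicit formulas $V=(N-2|\Real c|)|\Imag c|/|c|^2$ and $S=N|\Imag c|/|c|$. I would first check, in the affine coordinates adapted to the parallelogram, the images under $g_t$ of each side and vertex. Any degenerate configuration, such as a vertex mapping exactly to a vertex, would be handled by choosing the alternative digit $t\pm2$, which is available because $N\ge3$.
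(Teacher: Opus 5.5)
Your reduction is exactly the paper's: fix a single word $u$ by pigeonhole, pass to the limit to get $g_u(2c_0)\in\overline{\PTrap(c_0,N)}$, and then rest everything on the single-parameter statement that every point of the closed trap enters the open trap after at most two inverse steps. That statement is the paper's buffer $\overline{\PTrap_0(c)}\subset\PTrap_2(c)$. The gap is that you never prove it, and several claims in the boundary bookkeeping you sketch are wrong.

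Take $c=iy$ with $1<|y|<\sqrt N$, which lies in $\Xn\setminus\R$. Then $x=0$, so $\mathcal S(c,N)=N=\rho\,\mathcal V(c,N)$, and \Cref{lem:dynamics} gives $s'=-\rho v$ for \emph{every} digit $t$. Hence any $z$ with $|\lv(z)|=\mathcal V$ satisfies $|\ls(g_t(z);c)|=\mathcal S$ for all $t\in A_N$. So the long horizontal sides are not covered by any open piece $f_t(\PTrap)$, there is no overlap margin in the imaginary direction, and switching to $t\pm2$ cannot help. Conversely, a point with $|\ls(z;c)|=\mathcal S$ and $|\lv(z)|<\mathcal V$ enters the open trap in one step. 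The roles in your classification are therefore reversed, and the proposed fix for ``degenerate configurations'' is not backed by any argument.

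The missing idea is the decoupled estimate in canonical coordinates, which needs no case analysis; this is how the paper proves the buffer. For $z$ in the closed trap, $|\alpha|=\rho|s|/|y|\le\rho\mathcal S/|y|=N$, because the canonical trap saturates the digit-range condition. So the nearest admissible digit satisfies $|\alpha-t|\le1$, and
\[
|v'|\le|y|<\mathcal V,\qquad |s'|\le\frac{2|x||y|}{\rho}+\rho|v|\le\mathcal S,
\]
with the last inequality strict as soon as $|v|<\mathcal V$.

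The first step therefore always repairs the vertical coordinate; this is where $c\in\Xn$, i.e.\ $|y|<\mathcal V$, enters. The second step then repairs the slanted coordinate and lands in the open trap. Only points with $|\lv(z)|=\mathcal V$ can need the second step. This is why the delay is two, and why it is uniform in $k$, $n$ and $c$.
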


The inclusion says that limits of depth-$k$ capture parameters are still captured after at most two additional inverse steps. The delay-two bound is uniform in both $k$ and $n$.

The second theorem identifies the non-real part of $\Mn$ in the lens with the closure of the finite-capture locus.

\begin{maintheorem}[Non-real closure in the lens]\label{thm:finite-capture-non-real}
    Let $n\ge2$. Then
    \[
        (\Mn\cap\Xn)\setminus\R
        =
        \closure{\Theta_n}\cap(\Xn\setminus\R).
    \]
\end{maintheorem}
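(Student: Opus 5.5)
We prove the two inclusions separately. \Cref{thm:bounded-delay-closure} does most of the work for the harder one.

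\emph{The inclusion $\supseteq$.} Let $c\in\Theta_k(n)$, so $g_u(2c)\in\PTrap(c,N)\subset\interior E(c,N)$ for some word $u$ of length at most $k$. The attractor satisfies $E(c,N)=\bigcup_{t\in A_N} f_t(E(c,N))$. Since $f_u$ is the inverse of $g_u$, we get $2c=f_u(g_u(2c))\in f_u(E(c,N))\subset E(c,N)$. By \eqref{eq:intro-membership} (\Cref{prop:Mn_characterization}), $c\in\Mn$. Hence $\Theta_n\subset(\Mn\cap\Xn)\setminus\R$.

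Next we pass to closures. $\Mn$ is closed in $\C\setminus\D$: the attractor $E(c,N)$ depends continuously on $c$ in the Hausdorff metric, so the condition $2c\in E(c,N)$ is closed; alternatively, use $\Mn=\overline{\Rn}\setminus\D$ from \eqref{eq:intro-root-closure}. Also $\Xn\setminus\R$ is open and disjoint from $\D$. Therefore
\[
\closure{\Theta_n}\cap(\Xn\setminus\R)\subset \Mn\cap\Xn\setminus\R.
\]

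\emph{The inclusion $\subseteq$.} Let $c\in(\Mn\cap\Xn)\setminus\R$. By \eqref{eq:intro-root-closure}, $c$ is a limit of roots $c_j\in\Rn$. Since $\Xn\setminus\R$ is open, we may take all $c_j\in\Xn\setminus\R$. By \Cref{prop:dyn-poly}, for each $c_j$ some finite backward iterate lands exactly: $g_{u_j}(2c_j)=0$. The text notes that $0\in\PTrap(c_j,N)$ throughout $\Xn\setminus\R$, so $c_j\in\Theta_n$. Hence $c\in\closure{\Theta_n}$.

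This route needs no uniform bound on the lengths $|u_j|$, which may be unbounded. That is why the statement is phrased with $\closure{\Theta_n}$ rather than with a single $\Theta_k(n)$. \Cref{thm:bounded-delay-closure} then adds the stronger information that every point of $\closure{\Theta_k(n)}$ already lies in $\Theta_{k+2}(n)$.

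\emph{Main obstacle.} The main check is the claim $0\in\PTrap(c,N)$ for all $c\in\Xn\setminus\R$. This relies on the explicit construction of the trap, and I would cite it from the paper's trap section.

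If one instead tried to prove the inclusion $\subseteq$ directly from $2c\in E(c,N)$, the hard step would be showing that the backward orbit of $2c$ eventually enters the trap. A point of the attractor's boundary need not do so. For that reason I would go through the density of $\Rn$ as above, rather than attempt a direct dynamical argument.
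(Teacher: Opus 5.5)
Your proof is correct and matches the paper's. The inclusion $\supseteq$ follows from $\Theta_n\subset\Mn$ (the trap lies in $\interior E(c,N)$) together with closedness of $\Mn$ in $\C\setminus\D$. The inclusion $\subseteq$ follows by approximating $c$ with restricted roots $c_j$ in the open set $\Xn\setminus\R$. Each $c_j$ is captured because $g_{u_j}(2c_j)=0\in\PTrap(c_j,N)$, and this holds since the trap is a parallelogram centered at $0$ with positive half-widths on the lens.

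One correction to your framing: \Cref{thm:bounded-delay-closure} is not used in either inclusion, in your argument or in the paper's. You should drop the opening claim that it ``does most of the work.''
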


Thus, within $\Xn$, the non-real connectedness locus is recovered exactly from finite capture.

\begin{maincorollary}[Completion by the real trace in the lens]\label{cor:lens-real-trace}
    Let $n\ge2$. Then
    \[
        \Mn\cap\Xn
        =
        \bigl(\closure{\Theta_n}\cup(\partial\Mn\cap\R)\bigr)\cap\Xn.
    \]
\end{maincorollary}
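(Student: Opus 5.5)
The plan is to split $\Mn\cap\Xn$ into its non-real and real parts and treat each with a different tool. The non-real part is handled directly by \Cref{thm:finite-capture-non-real}. The real part will come from a topological argument showing that, inside the lens, real points of $\Mn$ are exactly real boundary points of $\Mn$.

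\emph{Step 1: reduce to the real axis.} Intersecting both sides with $\Xn\setminus\R$, the right-hand side becomes $\closure{\Theta_n}\cap(\Xn\setminus\R)$, since $\partial\Mn\cap\R$ contributes nothing off the real axis. By \Cref{thm:finite-capture-non-real} this equals $(\Mn\cap\Xn)\setminus\R$. It therefore suffices to prove
\[
\Mn\cap\Xn\cap\R=\bigl(\closure{\Theta_n}\cup(\partial\Mn\cap\R)\bigr)\cap\Xn\cap\R .
\]

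\emph{Step 2: the inclusion $\supseteq$.} Recall that $\Mn=\overline{\Rn}\setminus\D$ is closed in $\C\setminus\D$ by \eqref{eq:intro-root-closure}. Every point of $\Theta_n$ lies in $\Mn$: a capture $g_u(2c)\in\PTrap(c,N)\subset E(c,N)$ gives $2c=f_u(g_u(2c))\in E(c,N)$, by invariance of the attractor. Hence $\Theta_n\subset\Mn$, and the closure in $\C\setminus\D$ stays in $\Mn$. Points of the lens satisfy $|c|>1$, so we are always working inside $\C\setminus\D$. Likewise $\partial\Mn\cap\Xn\subset\Mn$ because $\Mn$ is closed there.

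\emph{Step 3: the inclusion $\subseteq$.} This is the main obstacle. Take a real $c\in\Mn\cap\Xn$; I want to show $c\in\partial\Mn$, i.e.\ that $c$ is not an interior point of $\Mn$. Equivalently, every real $c$ in the lens is a limit of non-real points outside $\Mn$, or is itself outside $\Mn$.

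The first thing I will try is to show that $\Mn\cap\R$ has empty interior relative to $\C$ near the real axis. For real $c$ with $|c|>1$, the attractor $E(c,N)$ lies on the real line, so it has no interior, and its length satisfies $|E|\le N\cdot|E|/|c|$. For $c$ slightly off the real axis, this collinear degeneracy should force $2c\notin E(c,N)$ for suitable small imaginary perturbations. Concretely, $E(c,N)$ is contained in the enclosure $\PEnc(c,N)$, a thin parallelogram of height $O(|\Imag c|)$ (see the figure macros), whereas $2c$ has imaginary part $2\Imag c$.

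So the plan is to compare the width parameter $V$ of $\PEnc$ with $2|\Imag c|$. If $V<2|\Imag c|$ near a real $c$, then a full punctured neighbourhood minus $\R$ avoids $\Mn$. That would force $\Mn\cap\Xn\cap\R\subset\partial\Mn$, since $c$ is then a limit of non-real non-members while lying in $\Mn$.

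If that estimate fails, for example when the enclosure is too fat for small $|c|$, the fallback is to be less ambitious. Either $c\in\partial\Mn$, and we are done. Or $c\in\interior\Mn$, in which case nearby non-real points lie in $(\Mn\cap\Xn)\setminus\R=\closure{\Theta_n}\cap(\Xn\setminus\R)$ by \Cref{thm:finite-capture-non-real}. Since $\Xn$ is open, $c$ is then a limit of such points and so lies in $\closure{\Theta_n}$. This dichotomy proves the inclusion unconditionally, with no estimate needed.

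I would therefore present the dichotomy as the actual proof, and keep the enclosure estimate only as a remark explaining when the $\partial\Mn\cap\R$ term is genuinely needed.
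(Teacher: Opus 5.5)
Your final argument is correct and is essentially the paper's proof. It consists of the $\supseteq$ direction from $\Theta_n\subset\Mn$ and closedness of $\Mn$ in $\C\setminus\D$, and the dichotomy $c\in\partial\Mn$ versus $c\in\interior(\Mn)$ for real $c$, with real interior points approximated by non-real points of $\interior(\Mn)\cap\Xn$ to which \Cref{thm:finite-capture-non-real} applies. Drop the enclosure idea from the write-up, because its goal is false. For $n\ge3$, real points of $\Xn$ just to the right of $1$ lie in the annulus $\{1<|c|<\sqrt n\}$, which is contained in $\interior(\Mn)$ by \Cref{prop:inner-annulus}, so the interior branch of the dichotomy genuinely occurs.
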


The next theorem gives the exact threshold for the lens regime.

\begin{maintheorem}[Sharp threshold for the non-real locus]\label{thm:lens-regime}
    For every $n\ge20$,
    \[
        \Mn\setminus\R \subset \Xn.
    \]
    Moreover, for each $2\le n\le19$ there exists a non-real parameter in $\Mn\setminus\Xn$.
\end{maintheorem}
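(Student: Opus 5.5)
The plan has two parts: an inclusion for $n\ge20$ by exclusion, and sharpness for $2\le n\le19$ by explicit witnesses. For the inclusion I would use the one-point criterion \eqref{eq:intro-membership} (\Cref{prop:Mn_characterization}). It suffices to show that for every non-real $c\notin\Xn$ with $|c|>1$ we have $2c\notin E(c,N)$. By the symmetries $c\mapsto\bar c$ and $c\mapsto-c$ of $\Mn$, I may assume $\Real c\ge0$ and $\Imag c>0$, so the violated lens condition is $|c-1|\ge\sqrt{2n}$. A crude a priori bound also applies: zeros of $1+\sum d_kc^{-k}$ with $|d_k|\le n-1$ satisfy $|c|<n$. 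So only a bounded region $K_n=\{\,1<|c|<n,\ |c-1|\ge\sqrt{2n},\ \Imag c>0\,\}$ must be excluded.

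\emph{Step 1 (analytic strip escape).} Every point of $E(c,N)$ has the form $z=\sum_{k\ge1}t_kc^{-k}$ with $t_k\in A_N$ and $|t_k|\le 2n-2$. This gives
\[
|\Imag z|\le H(c):=(2n-2)\sum_{k\ge1}|\Imag c^{-k}|.
\]
Using $|\Imag c^{-k}|\le k\,|\Imag c|\,|c|^{-k-1}$, or sharper bounds, one gets $H(c)\lesssim(2n-2)\,|\Imag c|/(|c|-1)^2$. Since $\Imag(2c)=2\Imag c$, the point $2c$ lies outside $E(c,N)$ as soon as $(2n-2)/(|c|-1)^2<2$, roughly when $|c|-1>\sqrt{n-1}$.

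This is the right scale, $\sqrt{2n}$, but it is not yet the exact lens. I would therefore iterate it. Apply inverse branches $g_t$ to $2c$ for one or two steps and use a sharper invariant set than a strip: a parallelogram adapted to the direction of $c$, essentially the enclosure construction $\PEnc$ extended beyond $\Xn$. This should show that every backward iterate of $2c$ leaves the invariant set on a region $K_n^{\rm an}$ that covers $K_n$ except for a neighbourhood of the corner where the lens boundary meets $|c|\approx n$ and the near-real part. Near the real axis, $\Imag c\to0$, the strip estimate is uniform in $\Imag c$ because both sides are linear in $\Imag c$. So the near-real antenna beyond the lens is excluded for free once $|c|-1$ exceeds the threshold.

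\emph{Step 2 (certified covering of the remainder).} The part of $K_n$ not handled analytically is compact and bounded away from the real axis. I would cover it by finitely many boxes. For each box, a certified inverse search finds a word $u$ such that $g_u(2c)$ lies outside the enclosure uniformly over the box, verified by interval arithmetic. For large $n$, the analytic region should swallow everything once the margin in Step 1 grows with $n$. So I would prove by hand that for $n\ge n_0$ no remainder exists, and run the certified search only for $20\le n<n_0$.

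\emph{Sharpness.} For each $2\le n\le19$ I would exhibit an explicit monic polynomial with coefficients in $\Dn$ having a non-real root $c$ with $|c-1|\ge\sqrt{2n}$ or $|c+1|\ge\sqrt{2n}$. This places $c\in\Rn\subset\Mn$ by \eqref{eq:intro-root-closure}, and certified root isolation confirms it lies outside $\Xn$.

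I expect the main obstacle to be Step 1 near the exact boundary. The lens constant $\sqrt{2n}$ comes from a quadratic balance, $|c\mp1|^2$ against $2n$, so the crude geometric-series bound on $H(c)$ loses too much. I would first try to replace it by the two-step parallelogram invariant set and optimise its side lengths so that the escape condition becomes exactly $|c-1|\ge\sqrt{2n}$ up to a margin that is positive for $n\ge20$. The failure of that margin for $n\le19$ should be consistent with the witnesses found in the sharpness part.
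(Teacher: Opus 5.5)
Your plan has a genuine gap at $n=20$, which is the only nontrivial case of the inclusion.

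\textbf{Step~1 already settles $n\ge21$.} Your Step~1 is exactly the vertical enclosure bound: $H(c)=\VE(c,N)<2|y|$ as soon as $(|c|-1)^2\ge n-1$, so $\Mn\setminus\R\subset\{|c|<1+\sqrt{n-1}\}$. This is a bound at scale $\sqrt n$, not $\sqrt{2n}$. It already places $\Mn\setminus\R$ inside $\Xn$ whenever $1+\sqrt{n-1}<\sqrt{2n}-1$, i.e.\ for every $n\ge21$. So $n_0=21$, nothing needs to be optimised toward $|c\mp1|=\sqrt{2n}$, and the whole difficulty is $n=20$.

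\textbf{Where the plan breaks.} With $\Real c\ge0$, the violated lens condition is $|c+1|\ge\sqrt{2n}$, not $|c-1|\ge\sqrt{2n}$. So near the axis the off-lens region starts at $|c|=\sqrt{2n}-1$, not at $\sqrt{2n}+1$. For $n=20$ one has $\sqrt{40}-1<1+\sqrt{19}$. What survives Step~1 is the sliver $\{x>0,\ y>0,\ |c+1|\ge\sqrt{40},\ |c|<1+\sqrt{19}\}$, contained in $41/8<x<43/8$ and accumulating on the real segment $[\sqrt{40}-1,\,1+\sqrt{19})$. That segment lies in $\mathcal M_{20}$: for real $1<c\le n$, $E(c,N)$ is the interval $[-M,M]$ with $M=2(n-1)c/(c-1)\ge2c$. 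Consequently:
\begin{enumerate}
\item the near-real part is not ``excluded for free'';
\item the remainder is not ``bounded away from the real axis'';
\item Step~2 cannot close the case, because every escape margin in the $(s,v)$ coordinates is $O(y)$, so no finite family of certified boxes covers the sliver.
\end{enumerate}

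\textbf{The missing idea.} Normalise the one-step parallelogram test by $y$, exactly as you did for the strip. Set $\beta:=\VE/y$. On the sliver $\rho\ge|c+1|-1>5$, so $\beta<38/(\rho-1)^2<19/8$. If $c\in\mathcal M_{20}$, some $z_1=g_t(2c)$ lies in $\PEnc(c,39)$. \Cref{lem:dynamics} gives $\lv(z_1)=y(4x-t)$ and $\ls(z_1;c)=-\tfrac{2y}{\rho}\bigl(\rho^2+x(t-4x)\bigr)$. Hence $|4x-t|\le\beta$ and $|\rho^2+x(t-4x)|\le(38+\beta)/2<323/16$. The first inequality forces $t\in\{20,22\}$. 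Then $\rho^2+x(t-4x)\ge 39+18x-4x^2>323/16$ for $x<43/8$, a contradiction. This argument is uniform as $y\to0$ and needs no computer.

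\textbf{A smaller error.} In Step~2, a single word $u$ with $g_u(2c)$ outside the enclosure certifies nothing. Exclusion needs every branch to leave, i.e.\ extinction of the enclosure-admissible tree at some depth, as you said correctly in Step~1.

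Your sharpness part (explicit monic polynomials in $\Dn[c]$ with certified non-real roots outside $\Xn$) is the paper's approach.
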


\begin{maincorollary}[Global description for \texorpdfstring{$n\ge20$}{n>=20}]\label{cor:global-finite-capture-description}
    If $n\ge20$, then
    \[
        \Mn=\closure{\Theta_n}\cup(\partial\Mn\cap\R).
    \]
\end{maincorollary}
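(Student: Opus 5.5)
The plan is to derive the corollary from \Cref{thm:lens-regime} and \Cref{thm:finite-capture-non-real}. Split $\Mn$ into its non-real and real parts. Throughout, closures are taken relative to $\C\setminus\D$, the ambient space in which $\Mn$ lives; this is also how the closures in \Cref{cor:lens-real-trace} should be read. Since $\Mn=\closure{\Rn}\setminus\D$ by \eqref{eq:intro-root-closure}, $\Mn$ is relatively closed in $\C\setminus\D$. In particular $\partial\Mn\cap\R\subset\Mn$, where the boundary is also taken relative to $\C\setminus\D$.

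\emph{Non-real part.} Fix $n\ge20$. By \Cref{thm:lens-regime}, $\Mn\setminus\R\subset\Xn$, so $\Mn\setminus\R=(\Mn\cap\Xn)\setminus\R$. By \Cref{thm:finite-capture-non-real} this equals $\closure{\Theta_n}\cap(\Xn\setminus\R)$. Hence $\Mn\setminus\R\subset\closure{\Theta_n}$.

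\emph{Real part.} Let $c\in\Mn\cap\R$. If $c\in\partial\Mn$ we are done. Otherwise $c$ is an interior point of $\Mn$, so a small disk around $c$ lies in $\Mn$. This disk contains non-real points of $\Mn$ converging to $c$. By the previous step these points lie in $\closure{\Theta_n}$, and this set is closed, so $c\in\closure{\Theta_n}$. This proves $\Mn\subset\closure{\Theta_n}\cup(\partial\Mn\cap\R)$.

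\emph{Reverse inclusion.} We already have $\partial\Mn\cap\R\subset\Mn$, so it remains to show $\closure{\Theta_n}\subset\Mn$. Since $\Mn$ is relatively closed, it suffices to show $\Theta_n\subset\Mn$. Take $c\in\Theta_n$, so $g_u(2c)\in\PTrap(c,N)\subset E(c,N)$ for some word $u$. The inverse branches $g_t=f_t^{-1}$ are inverses of the IFS maps, and $E(c,N)$ is invariant under the IFS. Applying the maps $f_t$ in reverse order along $u$ therefore returns $2c=f_u(g_u(2c))$ to $E(c,N)$. By \eqref{eq:intro-membership} this gives $c\in\Mn$. Alternatively, this inclusion is already contained in \Cref{thm:finite-capture-non-real}, since $\Theta_n\subset\Xn\setminus\R$.

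\emph{Expected obstacle.} The step I expect to be the only delicate point is the convention on closures near the unit circle. Points of $\closure{\Theta_n}$ taken in $\C$ could a priori have modulus $1$, and such points are excluded from $\Mn$. I would resolve this by stating explicitly that the closure is taken in $\C\setminus\D$, matching the relative closedness of $\Mn$ and the convention already used in \Cref{cor:lens-real-trace}. With that convention, the real-part argument and the closedness argument above go through directly.
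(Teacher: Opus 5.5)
Your proposal is correct and follows essentially the same route as the paper: the non-real part comes from \Cref{thm:lens-regime} combined with \Cref{thm:finite-capture-non-real}, real interior points are approximated by non-real interior points, and the reverse inclusion follows from $\Theta_n\subset\Mn$ together with the relative closedness of $\Mn$ in $\C\setminus\D$. Your concern about the closure convention is resolved exactly as in the paper, which states explicitly that $\closure{\Theta_n}$ denotes closure in $\C\setminus\D$.
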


In view of \Cref{eq:intro-root-closure}, the last statement gives a complete description of $\overline{\Rn}\setminus\D$ for $n\ge20$, away from the real boundary trace.

Relative to~\cite{EJSn24}, the advances are threefold:
\begin{enumerate}[label=(\roman*),leftmargin=2.2em,itemsep=0.3ex,topsep=0.4ex]
    \item \textbf{Canonical trap and enclosure.} The earlier self-covering rectangle is replaced by a canonical trap--enclosure pair defined throughout the lens.
    \item \textbf{Finite capture and closure.} Exact algebraic landing is replaced by finite capture, and finite capture satisfies a uniform delay-two closure theorem that yields the exact non-real description of $\overline{\Rn}\setminus\D$ inside the lens.
    \item \textbf{Sharp threshold.} The global lens regime improves from $n\ge21$ to the optimal threshold $n\ge20$; the sharpness direction for $2\le n\le19$ is certified in the appendix.
\end{enumerate}
Certified computation enters the theorems themselves, not only the illustrations: the inverse-certification algorithm produces finite witnesses, and the sharp-threshold theorem combines certified computation with analysis.

\runinsubsection{Strategy of the proof}
Two features of the argument are worth emphasizing. First, the set $\overline{\Rn}\setminus\D$ is defined \emph{a priori} by infinitely many reciprocal-series constraints. Second, the proof replaces these constraints by explicit geometric bounds and a finite-capture filtration. Parameters are certified by finite backward itineraries, and the two-step closure theorem shows that these certificates persist under limits.

The argument has three layers:
\begin{enumerate}[label=(\arabic*),leftmargin=2.2em,itemsep=0.3ex,topsep=0.4ex]
    \item \textbf{Algebraic--dynamical layer.} \Cref{sec:prelim} identifies $\Mn$ with $\overline{\Rn}\setminus\D$, records the reciprocal-series description, and reduces connectedness to the marked-point criterion \Cref{eq:intro-membership}.
    \item \textbf{Geometric layer.} \Cref{sec:trap} constructs the canonical trap--enclosure pair.
    \item \textbf{Structural layer.} \Cref{sec:certification,sec:stratification,sec:global} turns that geometry into a finite-capture filtration, proves the two-step closure theorem, and recovers the closure of $\Rn$ first in the lens and then globally once the sharp threshold is available.
\end{enumerate}
A central observation is that $0\in\PTrap(c,N)$ throughout $\Xn\setminus\R$. Therefore every restricted root in the lens is automatically a finite-capture parameter. In this sense, finite capture extends exact landing from the countable set $\Rn$ to its closure. The same pattern may be useful for other affine families.

\runinsubsection{Context and related work}
The connectedness locus $\mathcal M_2$ was introduced by Barnsley and Harrington~\cite{Barnsley1985AMaps}. Results most relevant here include Bousch's work on connectedness and local connectivity~\cite{Bousch1993ConnexiteFonctions}, Bandt's inverse-iteration viewpoint~\cite{Bandt2002OnMaps}, Solomyak's analyses of local geometry and asymptotic self-similarity~\cite{Solomyak2004LocalGeometry,Solomyak2005AsymptoticSelfSimilarity}, the regular-closedness theorem of Calegari--Koch--Walker~\cite{Calegari2017RootsConjecture}, and R\"osler's recent trap-based analysis of a related affine-map parameter locus in $\R^2$~\cite{Rosler2025}. For closely related $n$-gon families and restricted zero sets, see Bandt--Hung~\cite{BandtHung2008Ngons}, Himeki--Ishii~\cite{HimekiIshii2020}, and Nakajima~\cite{Nakajima2024}. For $n=2$, self-covering arguments go back to~\cite{Indlekofer1995}; see also~\cite{SolomyakXu2003,ShmerkinSolomyak2006,Hung2007PolygonFractals}.

On the algebraic side, the paper belongs to the study of roots of restricted-coefficient polynomials and power series~\cite{ShmerkinSolomyak2006,OdlyzkoPoonen1993Zeros01,Beaucoup1998,Beaucoup1998PowerRay,BorweinErdelyiLittmann2008}. What is distinctive here is not merely the existence of a restricted-root locus, but the fact that its closure is recovered exactly from finite dynamical certification.

The parameter-space viewpoint also places $\Mn$ near Thurston's set and the Master Teapot~\cite{Thurston2014,Tiozzo2020,Bray2021,LindseyWu2023,LindseyTiozzoWu2025,Silvestri2023AccessibilitySet}. The comparison is only structural: in each case, a parameter space of substantial geometric complexity is organized by algebraic constraints arising from dynamics. In our setting, $\Rn$ is the discrete algebraic counterpart and $\Mn$ the full dynamical locus.

Our immediate starting point is~\cite{EJSn24}, where we proved $\Mn\setminus\R\subset\Xn$ for all $n\ge21$ using a self-covering rectangle. The present paper replaces that construction by a canonical trap--enclosure geometry, introduces a finite-capture filtration with a uniform two-step closure theorem, and sharpens the threshold to the optimal range $n\ge20$.

Broader geometric background comes from self-affine tiles and planar number systems with collinear digit sets~\cite{KiratLau2000CCTiles,AkiyamaThuswaldner2000Topo2D,Akiyama2021CollinearTiles}, general topology of self-similar sets~\cite{BandtKeller1991,Hata1985Structure}, and overlap, interior, and uniqueness phenomena for self-affine families~\cite{BandtHung2008Overlaps,HareSidorov2016,Hare2017OnExpansions,ShmerkinSolomyak2016}. Although the paper is formulated in parameter space, the same viewpoint also applies in the dynamical plane. For fixed $c$, replacing the test point $2c$ with an arbitrary initial point $z_0$ yields capture-time stratifications of $E(c,N)$, and likewise of $E(c,n)$ in the smaller lens $|c\pm 1|<\sqrt{n+1}$. This viewpoint is complementary to Bandt's virtual-magnification approach to planar self-similar sets~\cite{Bandt2025MagnificationFlow}: here, the emphasis is on finite certification and multiscale stratification.

\begin{center}
    \begingroup
    \small
    \setlength{\tabcolsep}{6pt}
    \renewcommand{\arraystretch}{1.15}
    \begin{tabular}{@{}p{0.22\linewidth}p{0.74\linewidth}@{}}
        \toprule
        \textbf{Symbol}                   & \textbf{Meaning / first reference}                                                             \\
        \midrule
        $D_n$                             & $2n-1$ restricted integer coefficients, $\{-n+1,-n+2,\dots,n-1\}$.                                     \\
        $A_n$                             & $n$-ary digit set for the IFS $\{f_t\}_{t\in A_n}$, $\{-n+1,-n+3,\dots,n-1\}$.                                     \\
        $f_t, g_t$                        & forward $f_t(z):=t+z/c$ and inverse $g_t(z):=c(z-t)$ maps. \\
        $E(c,n)$                          & original collinear attractor (\Cref{sec:prelim}, \eqref{eq:series}).                           \\
        $\Mn$                            & connectedness locus; equivalently $\overline{\Rn}\setminus\D$.                               \\
        $\Rn$                            & roots of monic polynomials with non-leading coefficients in $\Dn$. \\
        $N=2n-1$, $A_N$                   & difference alphabet $A_N=A_n-A_n$ (\Cref{sec:prelim}).                                         \\
        $E(c,N)$                          & difference attractor governing connectivity (\Cref{prop:Mn_characterization}).                 \\
        $\Xn$                            & two-disk lens $\{\,c\in\C\setminus\D:\ |c\pm1|^2< 2n\,\}$ (\Cref{def:lens}).            \\
        $(s,v)$                           & slanted $\ell_s(z;c)$ and vertical $\ell_v(z)$ coordinates (\Cref{def:functionals-canonical}). \\
        $\PTrap(c,N)$                    & canonical trap (\Cref{def:canonical-trap}).                                                    \\
        $\PEnc(c,N)$                     & canonical enclosure (\Cref{def:Enc}).                                                          \\
        $k(c)$, $\Theta_k(n)$, $\Theta_n$ & capture time, capture sets, and finite-capture locus (\Cref{def:levels}).                    \\
        $\Omega_k(n)$                    & capture-time strata of the forward trap filtration (\Cref{def:strata}).                        \\
        $B(z_0,r)$, $\overline{B(z_0,r)}$ & open disk and its closure $\{z:|z-z_0|<r\}$, $\{z:|z-z_0|\le r\}$.                         \\
        \bottomrule
    \end{tabular}
    \endgroup
\end{center}

For convenience, the main symbols used in this work are summarized above.

\runinsubsection{Organization}
\Cref{sec:prelim} establishes the algebraic--dynamical correspondence linking restricted roots, connectedness, and the marked-point criterion. \Cref{sec:trap} constructs the canonical trap and enclosure on the lens. \Cref{sec:certification} introduces finite capture, survival, and the inverse-certification algorithm. \Cref{sec:stratification} proves the delay-two closure theorem. \Cref{sec:global} returns to the root problem, identifies the lens-local closure, and globalizes the description through the sharp threshold. The appendix contains the borderline case $n=20$ and the certified off-lens witnesses for $2\le n\le19$.

\runinsubsection{Conventions}
For an arbitrary subset $S\subset\C$, $\overline{S}$ denotes Euclidean closure in $\C$. If $S\subset \C\setminus\D$, we also use $\overline{S}$ for closure in the ambient parameter space $\C\setminus\D$ when the ambient space is clear from context. In particular, $\overline{\Rn}$ in \eqref{eq:intro-root-closure} denotes Euclidean closure in $\C$, whereas $\overline{\Theta_k(n)}$, $\overline{\Theta_n}$, and $\overline{\interior(\Mn)}$ denote closure in $\C\setminus\D$. For subsets of the dynamical plane, $\overline{S}$ always denotes Euclidean closure in $\C$, and $\partial\Mn$ is always the boundary relative to $\C\setminus\D$.

\section{Roots, closures, and the marked-point criterion}\label{sec:prelim}

We now set up the algebraic--dynamical correspondence that governs the paper. The closure problem for restricted roots is recast as a connectedness problem, and connectedness is then reduced to the backward dynamics of the marked point $2c$ in the difference attractor.

\subsection{The collinear fractals family}
Fix $n\ge2$ and $A_n=\{-n+1,-n+3,\dots,n-1\}$. For $c\in\C$ with $|c|>1$, consider the IFS $\{f_t\}_{t\in A_n}$ given by $f_t(z)=t+z/c$. The Hutchinson operator $\Hutch_c$ acts on nonempty compact subsets $K$ of $\C$ by
\[
	\Hutch_c(K)=\bigcup_{t\in A_n} f_t(K).
\]
By classical IFS theory~\cite{Hutchinson1981FractalsMeasures}, $\Hutch_c$ has a unique fixed point $E(c,n)$, the attractor, which admits the series representation
\begin{equation}\label{eq:series}
	E(c,n) = \left\{\ \sum_{k=0}^\infty a_k c^{-k} : a_k\in A_n\ \right\}.
\end{equation}

\subsection{Connectedness as a marked-point condition}
Set $N=2n-1$ and denote by $E(c,N)$ the attractor for the difference alphabet
$A_N=A_n-A_n=2\Dn$, where $\Dn=\{-n+1,-n+2,\dots,n-1\}$.

\begin{proposition}\label{prop:Mn_characterization}
	For $n\ge2$,
	\[
		\Mn = \{\,c\in\C\setminus\D : 2c\in E(c,N)\,\}.
	\]
\end{proposition}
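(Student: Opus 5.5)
We use the classical Hata criterion for connectedness of self-similar attractors. The attractor $E:=E(c,n)$ is connected if and only if the graph on $A_n$, with an edge $t\sim t'$ whenever $f_t(E)\cap f_{t'}(E)\ne\emptyset$, is connected (Hata~\cite{Hata1985Structure}). Since the digits $A_n$ are collinear, we first reduce this graph condition to consecutive digits. The argument then translates the one relevant intersection into membership of $2c$ in the difference attractor.

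\textbf{Step 1: translate an intersection into the difference attractor.} By \eqref{eq:series}, $f_t(E)\cap f_{t'}(E)\neq\emptyset$ means
\[
t+\sum_{k\ge1}a_kc^{-k}=t'+\sum_{k\ge1}a'_kc^{-k}
\]
for some $a_k,a'_k\in A_n$. Multiplying by $c$, this becomes $c(t'-t)=\sum_{k\ge0}(a_{k+1}-a'_{k+1})c^{-k}$. Since $A_n-A_n=A_N$, the right-hand side ranges exactly over $E(c,N)$. Hence
\[
f_t(E)\cap f_{t'}(E)\neq\emptyset \iff c(t'-t)\in E(c,N).
\]
For consecutive digits $t'-t=2$, so this says $2c\in E(c,N)$.

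\textbf{Step 2: sufficiency.} Suppose $2c\in E(c,N)$. Then every consecutive pair $t,t+2$ in $A_n$ is joined by an edge. The Hata graph therefore contains the path $-n+1,-n+3,\dots,n-1$ and is connected, so $E$ is connected.

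\textbf{Step 3: necessity.} This is the main obstacle, since the Hata graph could in principle be connected through non-consecutive edges $t\sim t+2m$ with $m\ge2$. We avoid the graph here and argue directly by separation.

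Suppose $2c\notin E(c,N)$. Then $f_t(E)\cap f_{t+2}(E)=\emptyset$ for every $t$. Note that $E(c,N)=-E(c,N)$, because $A_N$ is symmetric. For a fixed $t$, set
\[
L:=\bigcup_{s\le t}f_s(E),\qquad R:=\bigcup_{s\ge t+2}f_s(E).
\]
Both are compact and nonempty, and $E=L\cup R$. If $E$ is connected they must intersect, so $f_s(E)\cap f_{s'}(E)\neq\emptyset$ for some $s\le t<t+2\le s'$. Such an intersection is given by
\[
\sum_{k\ge1}(a_k-a'_k)c^{-k}=s'-s=2m,\qquad m\ge1,
\]
which only shows $2mc\in E(c,N)$ and not yet $2c\in E(c,N)$.

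To close the gap we use the lattice structure of the digit differences and choose $t$ cleverly. The intended route is an intermediate-value argument. Fix a codeword $x=\sum_{k\ge1}a_kc^{-k}$ and vary the codeword $y$ one digit at a time, each change being $\pm2$. Since $E$ is connected, it should be possible to move from a point of $f_s(E)$ to a point of $f_{s'}(E)$ through a chain of overlapping sets $f_u(E)$, $u\in A_n$. A more robust alternative uses Hata's theorem in reverse: a connected $E$ has a connected graph, and a connected graph on collinear digits must contain an edge crossing each gap $(t,t+2)$. We would then pick the crossing edge $(s,s')$ of minimal length $s'-s$ and try to show $s'-s=2$. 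The idea is to use $f_s(E)\cap f_{s'}(E)\ne\emptyset$ together with self-similarity, via the sets $f_sf_u(E)$, to produce a shorter crossing edge. We expect this minimality step to be the delicate point.

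Should it fail, the fallback is the known theorem for equally spaced collinear digit sets in planar attractors (Bandt--Hung, Kirat--Lau~\cite{KiratLau2000CCTiles}). That result states that connectedness is equivalent to $f_t(E)\cap f_{t+2}(E)\ne\emptyset$ for consecutive digits, and it would close the argument by combining $2c\in E(c,N)$ with Step 1.
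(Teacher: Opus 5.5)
Your proposal ends up with the same proof as the paper. The paper also treats the consecutive-overlap criterion ($E(c,n)$ connected iff $f_t(E)\cap f_{t+2}(E)\neq\varnothing$) as a black box, citing \cite[Prop.~2.2]{EJSn24}, and then performs exactly your Step~1 computation; your Hata-based Step~2 is a correct self-contained proof of sufficiency, and in both versions the necessity direction rests on the citation. The exploratory minimality and intermediate-value sketches in Step~3 are not arguments and should be dropped; indeed the Hata graph alone cannot force a consecutive edge, since for $n=4$ the differences $\{4,6\}$ already give a connected graph. You should also cite the criterion for this specific family with arbitrary $|c|>1$, e.g.\ \cite[Prop.~2.2]{EJSn24}, rather than Kirat--Lau, whose setting is self-affine tiles.
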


\begin{proof}
	By~\cite[Prop.~2.2]{EJSn24}, the attractor $E(c,n)$ is connected if and only if two consecutive first-level pieces of the original IFS overlap, that is,
	\[
		f_{t}\bigl(E(c,n)\bigr)\cap f_{t+2}\bigl(E(c,n)\bigr)\neq\varnothing.
	\]
	Equivalently, there exist digits $a_k\in A_N$ such that
	\begin{equation}\label{eq:series-critical}
		0=2+\sum_{k=1}^{\infty} a_k c^{-k}.
	\end{equation}
	Multiplying by $-c$ yields
	\[
		2c = \sum_{k=1}^\infty -a_k c^{-(k-1)} = \sum_{j=0}^\infty -a_{j+1} c^{-j}.
	\]
	Since $A_N$ is symmetric, $-a_{j+1}\in A_N$. The series representation \eqref{eq:series} (with $A_N$ in place of $A_n$) therefore shows that this is equivalent to $2c\in E(c,N)$.
\end{proof}

\begin{remark}\label{rem:DN-series}
	Since $A_N=2\Dn$, write $a_k=2d_k$ with $d_k\in\Dn$ and divide
	\eqref{eq:series-critical} by $2$. Then, for $c\in\C\setminus\D$ (equivalently $|c|>1$),
	$c\in\Mn$ if and only if there exists $(d_k)_{k=1}^\infty\subset\Dn$ such that
	$1 + \sum_{k=1}^\infty d_k c^{-k} = 0$.
\end{remark}

\begin{remark}[Basic symmetries]\label{rem:basic-symmetries}
	The characterization in \Cref{rem:DN-series} shows that $\Mn$ is invariant under complex conjugation and under the map $c\mapsto -c$. Indeed, if $1+\sum_{k=1}^\infty d_k c^{-k}=0$ with $d_k\in\Dn$, then conjugation gives $1+\sum_{k=1}^\infty d_k \,\overline{c}^{-k}=0$, and replacing $d_k$ by $(-1)^k d_k\in\Dn$ gives $1+\sum_{k=1}^\infty (-1)^k d_k\,(-c)^{-k}=0$.
\end{remark}

\subsection{Restricted roots and closure outside the unit disk}
The one-point criterion also admits an algebraic reformulation. Setting $\lambda:=1/c$ transfers the regime $|c|>1$ to $|\lambda|<1$, so that restricted-coefficient polynomials in $c$ become reciprocal power series in $\lambda$.

\begin{lemma}\label{lem:Mn-closed-prelim}
	The connectedness locus $\Mn$ is closed in $\C\setminus\D$.
\end{lemma}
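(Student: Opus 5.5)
We will use the series description of \Cref{rem:DN-series}: $c\in\Mn$ if and only if $|c|>1$ and there is a sequence $(d_k)_{k\ge1}\subset\Dn$ with $1+\sum_{k\ge1}d_kc^{-k}=0$. To show that $\Mn$ is closed in $\C\setminus\D$, we take a sequence $c_j\in\Mn$ converging to some $c$ with $|c|>1$, and we show that $c\in\Mn$. The argument is a compactness argument on the space of digit sequences.

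\textbf{Step 1 (digit sequences).} For each $j$, choose $(d^{(j)}_k)_k\in\Dn^{\N}$ with
\[
1+\sum_k d^{(j)}_k c_j^{-k}=0 .
\]
The space $\Dn^{\N}$ with the product topology is compact by Tychonoff, and it is metrizable. Passing to a subsequence, we may therefore assume that $d^{(j)}\to d$ coordinatewise. Since $\Dn$ is finite, this means that for every $K$ the first $K$ digits of $d^{(j)}$ agree with those of $d$ for all large $j$.

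\textbf{Step 2 (uniform tail estimate).} Fix $r$ with $1<r<|c|$. Then $|c_j|\ge r$ for all large $j$. For every sequence $e\in\Dn^{\N}$ and every $w$ with $|w|\ge r$,
\[
\Bigl|\sum_{k>K}e_kw^{-k}\Bigr|\le (n-1)\,\frac{r^{-K}}{r-1}.
\]
This bound is uniform in both $e$ and $w$.

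\textbf{Step 3 (passing to the limit).} Let $F(e,w):=1+\sum_k e_kw^{-k}$. We estimate $|F(d,c)|$ by splitting it into three pieces: $|F(d,c)-F(d,c_j)|$, then $|F(d,c_j)-F(d^{(j)},c_j)|$, and finally $F(d^{(j)},c_j)=0$.

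The first piece tends to zero because $F(d,\cdot)$ is continuous (indeed holomorphic) on $|w|>r$, as a uniform limit of polynomials in $w^{-1}$.

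For the second piece, the first $K$ digits of $d$ and $d^{(j)}$ agree for large $j$, so only the tails contribute. By Step 2 it is bounded by $2(n-1)r^{-K}/(r-1)$. Letting $j\to\infty$ and then $K\to\infty$ gives $F(d,c)=0$, hence $c\in\Mn$.

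\textbf{Alternative route.} One could instead use \Cref{prop:Mn_characterization} together with the continuity of $c\mapsto E(c,N)$ in the Hausdorff metric, which follows from the uniform contraction on compact subsets of $|c|>1$. Closedness of the condition $2c\in E(c,N)$ then follows. The series route above avoids checking the Hausdorff continuity.

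\textbf{Main obstacle.} The only delicate point is the uniformity of the tail bound. This requires keeping $|c_j|$ bounded away from $1$, and that holds precisely because the limit is taken inside $\C\setminus\D$. This is why the statement claims closedness only relative to $\C\setminus\D$ and not in $\C$.
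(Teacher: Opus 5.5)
Your proof is correct, but it takes a different route from the paper. The paper's proof is essentially your ``alternative route''. It cites Hausdorff continuity of $c\mapsto E(c,N)$ on $\C\setminus\D$, deduces that $\phi(c)=\dist(2c,E(c,N))$ is continuous, and reads off $\Mn=\phi^{-1}(0)$ from \Cref{prop:Mn_characterization}.

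You instead work with the reciprocal-series form of \Cref{rem:DN-series} and argue by compactness on the code space $\Dn^{\N}$. Your locally uniform tail bound $(n-1)r^{-K}/(r-1)$ on $|w|\ge r>1$ makes the condition $F(d,c)=0$ closed jointly in $(d,c)$. Then $\Mn$ is the projection of this closed set along a compact factor, hence closed.

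The two routes are closer than they look. The Hausdorff continuity the paper invokes is usually proved by exactly your tail estimate, with the contraction ratio $1/|c|$ kept away from $1$ near the limit point.

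Your version is self-contained. It needs no Hutchinson metric and no attractor as a set, only the series criterion, which like the paper's argument ultimately rests on \Cref{prop:Mn_characterization}. The paper's version is shorter, given the standard continuity fact. It also yields an explicit continuous defect function $\phi$ whose zero set is $\Mn$.

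Your closing remark is also right: closedness can only hold relative to $\C\setminus\D$. The annulus $1<|c|<\sqrt n$ lies in $\Mn$, so the unit circle lies in the Euclidean closure of $\Mn$ but not in $\Mn$.
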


\begin{proof}
	The map $c\mapsto E(c,N)$ is continuous on $\C\setminus\D$ in the Hausdorff metric
	(a standard fact for contractive IFS attractors; cf.~\cite{Hutchinson1981FractalsMeasures}).
	Hence $\phi(c):=\dist(2c,E(c,N))$ is continuous.
	By \Cref{prop:Mn_characterization}, $\Mn=\phi^{-1}(0)\cap(\C\setminus\D)$, so $\Mn$ is closed in $\C\setminus\D$.
\end{proof}

\begin{proposition}\label{prop:Mn-closure-Rn}
	Let $\Rn$ be the set of roots of monic polynomials with coefficients in $\Dn=\{-n+1,-n+2,\dots,n-1\}$:
	\[
		\Rn := \Bigl\{\,c\in\C : c^m + \sum_{j=0}^{m-1} d_j c^j = 0,\ d_j\in\Dn,\ m\ge 1 \,\Bigr\}.
	\]
	For $n\ge2$,
	\[
		\Mn = \overline{\Rn}\setminus\D.
	\]
\end{proposition}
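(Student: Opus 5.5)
Via \Cref{rem:DN-series}, for $|c|>1$ we have $c\in\Mn$ if and only if $F(c):=1+\sum_{k\ge1}d_kc^{-k}$ vanishes for some $(d_k)\subset\Dn$. We prove the two inclusions separately.

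\emph{Inclusion $\overline{\Rn}\setminus\D\subset\Mn$.} First take $c\in\Rn$ with $|c|>1$, a root of $c^m+\sum_{j<m}d_jc^j$. Dividing by $c^m$ gives
\[
1+\sum_{k=1}^m d_{m-k}c^{-k}=0 .
\]
Padding with $d_k=0\in\Dn$ for $k>m$ yields a restricted reciprocal series vanishing at $c$, so $c\in\Mn$. Next let $c\in\overline{\Rn}$ with $|c|>1$. Then $c$ is a limit of points of $\Rn$, and eventually these points lie in $\C\setminus\D$, hence in $\Mn$. Since $\Mn$ is closed in $\C\setminus\D$ by \Cref{lem:Mn-closed-prelim}, we get $c\in\Mn$.

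\emph{Inclusion $\Mn\subset\overline{\Rn}$.} Let $c\in\Mn$, so $F(c)=0$ for some $(d_k)\subset\Dn$. Write $\lambda=1/c$, $|\lambda|<1$, and consider the truncations
\[
P_m(\lambda)=1+\sum_{k=1}^m d_k\lambda^k .
\]
These are uniformly bounded on compact subsets of the unit disk, since $|d_k|\le n-1$, and they converge locally uniformly to $F(\lambda)=1+\sum_{k\ge1} d_k\lambda^k$. This limit is not identically zero because its constant term is $1$.

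Apply Hurwitz's theorem on a small disk around $\lambda_0=1/c$ that lies inside the unit disk and contains no other zero of $F$. For all large $m$, $P_m$ has a zero $\lambda_m$ in that disk, with $\lambda_m\to\lambda_0$. Each $\lambda_m\neq0$, because $P_m(0)=1$.

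Then $c_m=1/\lambda_m$ satisfies
\[
c_m^m+\sum_{k=1}^m d_k c_m^{m-k}=0 ,
\]
which is a monic polynomial of degree $m$ whose non-leading coefficients $d_k$ lie in $\Dn$. Hence $c_m\in\Rn$ and $c_m\to c$. Together with $|c|>1$, this gives $c\in\overline{\Rn}\setminus\D$.

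The only delicate step is the Hurwitz argument. It requires local uniform convergence, which holds because the coefficients are bounded and $|\lambda|<1$. It also requires that $F\not\equiv0$, which the constant term $1$ guarantees. Everything else is bookkeeping between $c$ and $\lambda=1/c$.
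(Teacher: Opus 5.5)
Your proof is correct and follows the paper's argument essentially step for step. The shared steps are: dividing by $c^m$ and padding with zeros for roots outside $\D$, using closedness of $\Mn$ in $\C\setminus\D$ (\Cref{lem:Mn-closed-prelim}) to pass to limit points, and applying Hurwitz/Rouch\'e to the truncations $1+\sum_{k\le m}d_k\lambda^k$ near $\lambda=1/c$ before setting $c_m=1/\lambda_m$. The only cosmetic difference is that the paper obtains $\lambda_m\to\lambda$ by applying Rouch\'e on a sequence of radii $r_j\downarrow0$; your appeal to Hurwitz's theorem on a single disk implicitly relies on this same shrinking-disk argument.
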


\begin{proof}
	\smallskip
	\noindent\emph{Step 1: $\Rn\setminus\D\subset\Mn$.}
	Let $c\in\Rn$ with $|c|>1$. Then
	\[
		c^m + \sum_{j=0}^{m-1} d_j c^j = 0
	\]
	for some $d_j\in\Dn$. Dividing by $c^m$ and setting $e_k:=d_{m-k}\in\Dn$ gives
	\[
		1+\sum_{k=1}^m e_k c^{-k}=0.
	\]
	Extending by $e_k=0$ for $k>m$, \Cref{rem:DN-series} yields $c\in\Mn$.

	\smallskip
	\noindent\emph{Step 2: $\Mn\subset\overline{\Rn}$.}
	Let $c\in\Mn$. By \Cref{rem:DN-series} there exists $(d_k)_{k\ge1}\subset\Dn$ with
	\[
		1+\sum_{k=1}^\infty d_k c^{-k}=0.
	\]
	Set $\lambda:=1/c$ and
	\[
		F(z):=1+\sum_{k=1}^\infty d_k z^k,
		\qquad
		F_m(z):=1+\sum_{k=1}^m d_k z^k.
	\]
	Then $F(\lambda)=0$ with $|\lambda|<1$, and $F$ is holomorphic on $\{|z|<1\}$. Choose a sequence $r_j\downarrow0$ such that $\overline{B(\lambda,r_j)}\subset\{|z|<1\}$ and $F$ has no zeros on $\partial B(\lambda,r_j)$ for every $j$. Since $F_m\to F$ uniformly on each $\overline{B(\lambda,r_j)}$, Rouch\'e's theorem (equivalently, the local form of Hurwitz's theorem) implies that for each $j$ there exists $m_j$ such that $F_{m_j}$ has a zero $\lambda_j\in B(\lambda,r_j)$. Hence $\lambda_j\to\lambda$.

	Define
	\[
		Q_j(z):=z^{m_j}F_{m_j}(1/z).
	\]
	Then $Q_j$ is monic with coefficients in $\Dn$, and $Q_j(1/\lambda_j)=0$. Thus $c_j:=1/\lambda_j\in\Rn$ and $c_j\to c$, so $c\in\overline{\Rn}$.

	Since $\Rn\setminus\D\subset\Mn$ by Step~1 and $\Mn$ is closed in $\C\setminus\D$ by \Cref{lem:Mn-closed-prelim}, it suffices to observe that $\overline{\Rn}\setminus\D\subset\overline{\Rn\setminus\D}$. If $c\in\overline{\Rn}\setminus\D$, choose $c_m\in\Rn$ with $c_m\to c$. Since $\C\setminus\D$ is an open neighborhood of $c$, we have $c_m\in\Rn\setminus\D$ for all sufficiently large $m$, so $c\in\overline{\Rn\setminus\D}\subset\Mn$.

	Combining the two steps gives $\Mn=\overline{\Rn}\setminus\D$.
\end{proof}

\begin{corollary}[Restricted reciprocal-series description]\label{cor:Mn-power-series}
	For every $n\ge2$,
	\[
		\overline{\Rn}\setminus\D
		=
		\Bigl\{\,c\in\C\setminus\D:\ \exists (d_k)_{k\ge1}\subset\Dn
		\text{ such that } 1+\sum_{k=1}^{\infty} d_k c^{-k}=0\,\Bigr\}.
	\]
\end{corollary}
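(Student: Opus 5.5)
This corollary follows by chaining two results already proved: \Cref{prop:Mn-closure-Rn} identifies $\overline{\Rn}\setminus\D$ with $\Mn$, and \Cref{rem:DN-series} describes $\Mn$ as the set of $c$ with $|c|>1$ for which some sequence $(d_k)_{k\ge1}\subset\Dn$ satisfies $1+\sum_{k\ge1}d_kc^{-k}=0$. So we will write
\[
\overline{\Rn}\setminus\D=\Mn=\Bigl\{\,c\in\C\setminus\D:\ \exists (d_k)_{k\ge1}\subset\Dn,\ 1+\sum_{k=1}^\infty d_kc^{-k}=0\,\Bigr\},
\]
where the first equality is \Cref{prop:Mn-closure-Rn} and the second is \Cref{rem:DN-series}.

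The one point to make explicit is that \Cref{rem:DN-series} covers every $c\in\C\setminus\D$, which is exactly the condition $|c|>1$. Its proof runs through \Cref{prop:Mn_characterization} and the overlap criterion cited from \cite{EJSn24}, both of which hold for all $|c|>1$. On this set each such series converges absolutely, since $|d_k|\le n-1$ and $|c^{-1}|<1$, so the defining condition makes sense.

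There is no real obstacle here; the argument is a direct combination of two earlier results. The only care needed is with conventions: $\overline{\Rn}$ is the Euclidean closure in $\C$, and removing $\D$ lands inside $\C\setminus\D$, which matches the ambient set on the right-hand side.
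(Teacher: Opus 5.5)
Your proposal is correct and is exactly the paper's argument: the paper's proof simply combines \Cref{prop:Mn-closure-Rn} with \Cref{rem:DN-series}. Your extra remarks on absolute convergence and on the closure convention are harmless additions.
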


\begin{proof}
	Combine \Cref{prop:Mn-closure-Rn} with \Cref{rem:DN-series}.
\end{proof}

The closure identity describes $\Mn$ as a limit of finite algebraic data. Through \Cref{cor:Mn-power-series}, it also identifies $\Mn$ with a restricted reciprocal-series zero locus. To extract finite witnesses from this description, we now rewrite the same condition in terms of backward iterates of the marked point $2c$.

\subsection{Backward dynamics and polynomial evaluation}
\begin{definition}[Words and composed branches]\label{def:gu}
	Fix $c$ with $|c|>1$ and an alphabet $A_N$.
	For each $t\in A_N$ define
	\[
		f_t(z):=t+\frac{z}{c},
		\qquad
		g_t(z):=c(z-t)=f_t^{-1}(z).
	\]
	For a word $u=t_1 t_2\cdots t_k\in A_N^k$ of length $|u|:=k$, define
	\[
		f_u:=f_{t_1}\circ f_{t_2}\circ\cdots\circ f_{t_k},
		\qquad
		g_u:=g_{t_k}\circ\cdots\circ g_{t_2}\circ g_{t_1}=f_u^{-1}.
	\]
	For words $u,v$ we have $f_{uv}=f_u\circ f_v$ and $g_{uv}=g_v\circ g_u$. Let $A_N^{\le k}:=\bigcup_{j=0}^k A_N^j$, and set
	$f_{\emptyword}=g_{\emptyword}=\mathrm{id}$ for the empty word $\emptyword$.
\end{definition}

The backward dynamics of $2c$ under the difference IFS encodes evaluations of restricted polynomials.

\begin{proposition}\label{prop:dyn-poly}
	Let $u=t_1\cdots t_k\in A_N^k$ and $t_j=2d_j$ with $d_j\in\Dn$. Then,
	\[
		g_u(2c) = 2c\,P_u(c),
	\]
	where $P_u(z) = z^k - d_1 z^{k-1} - \dots - d_k$ is a monic polynomial whose non-leading coefficients lie in $-\Dn=\Dn$.
\end{proposition}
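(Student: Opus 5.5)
The plan is induction on the length $k=|u|$, using the composition rule $g_{ut}=g_t\circ g_u$ from \Cref{def:gu}, where $ut$ denotes the word $u$ followed by the single letter $t$. Everything reduces to the fact that each inverse branch $g_t(z)=c(z-t)$ is affine with linear part $c$. Composing branches is then exactly a Horner-scheme evaluation in the variable $c$.

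\emph{Base cases.} For the empty word, $g_{\emptyword}(2c)=2c$ and $P_{\emptyword}\equiv1$. For a single letter $t_1=2d_1$,
\[
g_{t_1}(2c)=c(2c-2d_1)=2c\,(c-d_1)=2c\,P_{t_1}(c).
\]

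\emph{Inductive step.} Assume $g_u(2c)=2c\,P_u(c)$ for some $u=t_1\cdots t_k$, and append a letter $t_{k+1}=2d_{k+1}$. Then
\[
g_{ut_{k+1}}(2c)=g_{t_{k+1}}\bigl(g_u(2c)\bigr)=c\bigl(2c\,P_u(c)-2d_{k+1}\bigr)=2c\bigl(c\,P_u(c)-d_{k+1}\bigr).
\]
It remains to check the polynomial recursion $P_{ut_{k+1}}(z)=z\,P_u(z)-d_{k+1}$. This is immediate from the explicit form: multiplying $z^k-d_1z^{k-1}-\dots-d_k$ by $z$ and subtracting $d_{k+1}$ gives $z^{k+1}-d_1z^k-\dots-d_kz-d_{k+1}$.

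\emph{Coefficients.} Each $P_u$ is monic, and its non-leading coefficients are $-d_j$. Since $\Dn$ is symmetric, these lie in $-\Dn=\Dn$.

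\emph{Main obstacle.} There is no real difficulty. The only point needing care is the order convention: the first letter $t_1$ is applied first in $g_u$, so $d_1$ multiplies the highest remaining power $z^{k-1}$. Taking $g_{ut}=g_t\circ g_u$ (rather than prepending letters) is what makes the Horner recursion line up with the stated coefficient order.
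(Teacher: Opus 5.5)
Your proposal is correct and matches the paper's proof: induction on word length using $g_{ut}=g_t\circ g_u$, which gives the Horner recursion $P_{ut_{k+1}}(z)=z\,P_u(z)-d_{k+1}$. Your extra empty-word base case, $P_{\emptyword}\equiv1$, is harmless; the paper simply starts the induction at $k=1$.
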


\begin{proof}
	We argue by induction on the word length $k$. For $k=1$,
	\[
		g_{t_1}(2c) = c(2c-t_1) = c(2c-2d_1) = 2c(c-d_1),
	\]
	so the claim holds with $P_{t_1}(z)=z-d_1$.

	For the inductive step, write $u=u't_k$, where $u'$ has length $k-1$. Then
	\[
		g_u(2c) = g_{t_k}\bigl(g_{u'}(2c)\bigr) = c\bigl(g_{u'}(2c)-t_k\bigr).
	\]
	By the induction hypothesis, $g_{u'}(2c)=2c\,P_{u'}(c)$, hence
	\[
		g_u(2c) = c\bigl(2c\,P_{u'}(c)-2d_k\bigr) = 2c\bigl(c\,P_{u'}(c)-d_k\bigr).
	\]
	Thus the recursion $P_u(z)=zP_{u'}(z)-d_k$ gives the required monic polynomial of degree $k$ with non-leading coefficients in $-\Dn=\Dn$.
\end{proof}

In view of \Cref{prop:dyn-poly}, $g_u(2c)=0$ if and only if $c$ is a root of a monic polynomial with coefficients in $\Dn$. In particular, $\Rn$ consists precisely of parameters $c$ for which a finite backward iterate of $2c$ hits $0$.

\begin{remark}\label{rem:zero-in-attractor}
	For every $|c|>1$, one has $0\in E(c,N)$ because the constant address $a_k\equiv 0$ (and $0\in A_N$)
	gives $0=\sum_{k\ge 0} 0\cdot c^{-k}\in E(c,N)$.
	Consequently, if for some word $u$ we have $g_u(2c)=0$, then $2c=f_u(0)\in E(c,N)$ and hence $c\in\Mn$
	by \Cref{prop:Mn_characterization}.
\end{remark}

\section{Canonical parallelograms in the lens}\label{sec:trap}

This section constructs the two canonical parallelograms used throughout the paper. For non-real parameters in the lens we introduce coordinates adapted to the inverse dynamics, build an explicit self-covering trap, and then complement it by a canonical enclosure. Together these parallelograms provide the interior and exterior geometry from which finite capture emerges.

\subsection{Canonical coordinates for the inverse dynamics}
\begin{definition}\label{def:functionals-canonical}
	Identify $\C\simeq\R^2$ with the inner product $\langle z,w\rangle := \Real(z\overline w)$.
	For $c\in\C\setminus\R$, define the unit vector $e_s(c):=\frac{i\overline c}{|c|}$. The \emph{slanted} and \emph{vertical} canonical functionals are
	\begin{equation}
		\ell_s(z;c)=\langle z,e_s(c)\rangle=\frac{\Imag(cz)}{|c|},
		\qquad
		\lv(z)=\Imag z.
	\end{equation}
\end{definition}

Equivalently, if $z=u+iv$ and $c=x+iy$ with $\rho:=|c|$, then
\[
	\ell_s(z;c)=\frac{yu+xv}{\rho},
	\qquad
	\ell_v(z)=v.
\]
The map $z\mapsto (s,v):=(\ell_s(z;c),\ell_v(z))$ gives linear coordinates for each fixed $c\notin\R$.

Given $\mathcal S,\mathcal V>0$, define the \emph{canonical parallelogram}
\begin{equation}
	\mathcal P_c(\mathcal S,\mathcal V)
	:= \{\,z\in\C : |\ls(z;c)|<\mathcal S,\ |\lv(z)|<\mathcal V \,\}.
\end{equation}
In $(s,v)$-coordinates this is the axis-aligned rectangle $\{|s|<\mathcal S,\ |v|<\mathcal V\}$; see \Cref{fig:canonical-parallelogram-to-rectangle}.

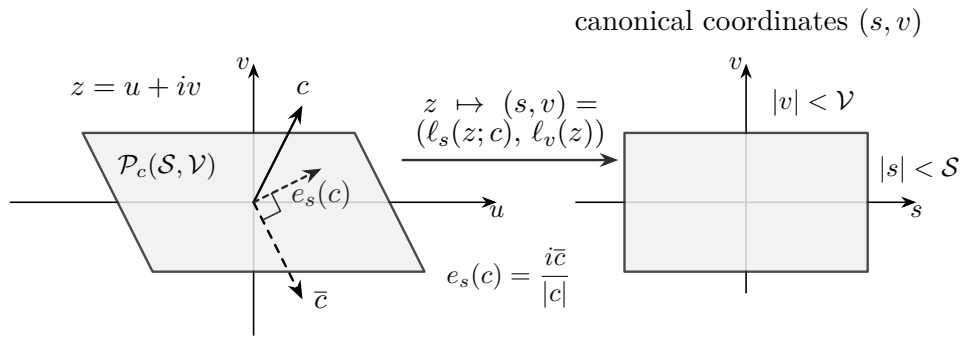
\begin{figure}[!htbp]
	\centering
	\begin{tikzpicture}[scale=0.80, >=Stealth, line cap=round, line join=round, font=\small]
		\pgfmathsetmacro{\xC}{1.0}
		\pgfmathsetmacro{\yC}{2.0}
		\pgfmathsetmacro{\rhoC}{sqrt(\xC*\xC + \yC*\yC)}

		\pgfmathsetmacro{\ux}{\xC/\rhoC}
		\pgfmathsetmacro{\uy}{\yC/\rhoC}
		\pgfmathsetmacro{\esx}{\yC/\rhoC}
		\pgfmathsetmacro{\esy}{\xC/\rhoC}

		\pgfmathsetmacro{\Vh}{1.15}
		\pgfmathsetmacro{\Sh}{2.00}

		\pgfmathsetmacro{\aInv}{\rhoC/\yC}
		\pgfmathsetmacro{\cInv}{-\xC/\yC}

		\tikzset{
			canpara/.style={fill=Omega0Fill, draw=Omega0Line, line width=0.9pt, opacity=0.85},
			maparr/.style={->, line width=0.9pt, draw=LevelEdge!90!black},
			vect/.style={->, line width=0.9pt},
			axislabel/.append style={fill=none},
		}

		\begin{scope}
			\node[anchor=south west] at (-3.2,1.55) {$z=u+iv$};

			\draw[axis] (-4,0) -- (4,0) node[axislabel, pos=1, below] {$u$};
			\draw[axis] (0,-2.2) -- (0,2.3) node[axislabel, pos=1, left] {$v$};

			\begin{scope}[cm={\aInv,0,\cInv,1,(0,0)}]
				\filldraw[canpara] (-\Sh,-\Vh) -- (\Sh,-\Vh) -- (\Sh,\Vh) -- (-\Sh,\Vh) -- cycle;
				\node[font=\footnotesize] at (-1,0.6) {$\mathcal P_c(\mathcal S,\mathcal V)$};
			\end{scope}

			\pgfmathsetmacro{\Lvec}{1.80}
			\pgfmathsetmacro{\Lnorm}{1.25}
			\draw[vect, black] (0,0) -- ({\Lvec*\ux},{\Lvec*\uy}) node[above] {$c$};
			\draw[vect, black, dashed] (0,0) -- ({\Lvec*\ux},{-\Lvec*\uy}) node[right] {$\overline c$};
			\draw[vect, gray!30!black, dash pattern=on 2pt off 2pt] (0,0) -- ({\Lnorm*\esx},{\Lnorm*\esy})
			node[below] {$e_s(c)$};

			\pgfmathsetmacro{\eps}{0.33}
			\coordinate (B)  at ({\eps*\ux},{-\eps*\uy});
			\coordinate (C)  at ({\eps*\esx},{\eps*\esy});
			\coordinate (BC) at ($(B)+(C)$);
			\draw[LevelEdge!90!black, line width=0.6pt] (B) -- (BC) -- (C);

		\end{scope}

		\draw[maparr] (2.45,0.7) -- (6.0,0.7)
		node[midway, above, align=center]
		{$z\ \mapsto\ (s,v)=$\\[-2pt]$(\ell_s(z;c),\,\ell_v(z))$};

		\node[font=\footnotesize, text=black!95, align=left] at (4.2,-1.25)
		{$e_s(c)=\dfrac{i\overline c}{|c|}$};

		\begin{scope}[xshift=8.1cm]
			\node[anchor=south west] at (-3.0,2.55) {canonical coordinates $(s,v)$};

			\draw[axis] (-2.8,0) -- (2.8,0) node[axislabel, pos=1, below] {$s$};
			\draw[axis] (0,-1.5) -- (0,2.3) node[axislabel, pos=1, left] {$v$};

			\filldraw[canpara] (-\Sh,-\Vh) rectangle (\Sh,\Vh);

			\node[font=\footnotesize] at (1.1,1.65) {$|v|<\mathcal V$};
			\node[font=\footnotesize] at (2.85,0.55) {$|s|<\mathcal S$};
		\end{scope}
	\end{tikzpicture}
	\caption{Canonical coordinates straighten $\mathcal P_c(\mathcal S,\mathcal V)$ into a rectangle.}
	\label{fig:canonical-parallelogram-to-rectangle}
\end{figure}

The main advantage of these coordinates is that the inverse branches have an especially simple form.

\begin{lemma}[Dynamics in canonical coordinates]\label{lem:dynamics}
	Let $c=x+iy\in\C\setminus\R$ with $\rho:=|c|$, and let $z=u+iv\in\C$.
	Set $(s,v)=(\ls(z;c),\lv(z))$.
	For $t\in A_N$, the backward iterate $z'=g_t(z)=c(z-t)$ has canonical coordinates $(s',v')$ given by
	\[
		v'=\rho s-yt,
		\qquad
		s'=\frac{2x}{\rho}\,v' - \rho v.
	\]
\end{lemma}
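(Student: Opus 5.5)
The plan is a direct computation in real coordinates. It uses only the definitions of $\ls$ and $\lv$ from \Cref{def:functionals-canonical} and the facts that $t\in A_N\subset\R$ and $c\bar c=\rho^2$. No earlier dynamical result is needed.

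\emph{Vertical coordinate.} By definition $v'=\Imag(z')=\Imag\bigl(c(z-t)\bigr)=\Imag(cz)-t\,\Imag(c)$, where we used that $t$ is real. \Cref{def:functionals-canonical} gives $\Imag(cz)=\rho\,\ls(z;c)=\rho s$. Since $\Imag c=y$, this yields $v'=\rho s-yt$ immediately.

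\emph{Slanted coordinate.} We need $s'=\Imag(cz')/\rho$, and the goal is to express $\Imag(cz')$ through $v'$ and $v$. The key identity holds for any $w\in\C$:
\[
	\Imag(cw)+\Imag(\bar c w)=2x\,\Imag w .
\]
Indeed, $\Imag(cw)=x\Imag w+y\Real w$ and $\Imag(\bar cw)=x\Imag w-y\Real w$. Apply this with $w=z'$. Then $\bar c z'=\bar c c(z-t)=\rho^2(z-t)$, so $\Imag(\bar c z')=\rho^2\Imag(z-t)=\rho^2 v$, again because $t$ is real. Hence
\[
	\Imag(cz')=2x v'-\rho^2 v .
\]
Dividing by $\rho$ gives $s'=\frac{2x}{\rho}v'-\rho v$, as claimed.

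There is no real obstacle. The only point requiring care is the choice of the identity above: multiplying by $\bar c$ kills the non-real rotation, which is what makes the formula close up in terms of $v$ alone. The hypothesis $c\notin\R$ enters only because the coordinates $(s,v)$ are defined under that assumption (\Cref{def:functionals-canonical}); the computation itself does not use it.
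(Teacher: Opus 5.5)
Your computation is correct and is essentially the paper's argument: a direct calculation that uses only linearity of $\ls,\lv$ and the fact that $t$ is real. The difference is bookkeeping. The paper computes $\ls(cz;c)=2xs-\rho v$ and $\ls(ct;c)=\frac{2xy}{\rho}t$ separately, subtracts, and regroups in terms of $v'$. Your identity $\Imag(cw)+\Imag(\bar c w)=2x\,\Imag w$, applied to $w=z'$, yields the $v'$-form directly without that regrouping.
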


\begin{proof}
	Since $z'=cz-ct$ and $\ls,\lv$ are linear, it suffices to compute $\ls(cz;c)$ and $\lv(cz)$.
	A direct calculation gives $\ls(cz;c)=2xs-\rho v$ and $\lv(cz)=\Imag(cz)=\rho s$.
	Also $\ls(ct;c)=\frac{2xy}{\rho}t$ and $\lv(ct)=yt$ because $t\in\R$.
	Subtracting yields $v'=\rho s-yt$ and
	$s'=(2xs-\rho v)-\frac{2xy}{\rho}t=\frac{2x}{\rho}(\rho s-yt)-\rho v$.
\end{proof}

The key point is that the digit $t$ enters only through the vertical update. Since consecutive digits in $A_N$ differ by $2$, choosing $t$ near $\rho s/y$ keeps $v'$ close to $0$. This observation motivates the nearest admissible digit rule introduced below.

\subsection{Self-covering parallelograms and interior inclusion}
\begin{definition}[Self-covering set]\label{def:self-covering-parallelogram}
	Fix $|c|>1$ and consider the difference IFS $\{f_t(z)=t+z/c\}_{t\in A_N}$ with inverse branches
	$g_t(z)=c(z-t)$.
	A nonempty set $U\subset\C$ is called \emph{self-covering} (for this IFS) if
	\begin{equation}\label{eq:self-covering-forward}
		U \subset \bigcup_{t\in A_N} f_t(U)
		\qquad\text{equivalently}\qquad
		(\forall z\in U)\,(\exists t\in A_N):\ g_t(z)\in U.
	\end{equation}
\end{definition}

The significance of this notion is that a bounded open self-covering set automatically lies in the interior of the attractor.

\begin{lemma}\label{lem:self-covering-interior}
	Let $|c|>1$, and let $U\subset\C$ be a nonempty bounded open set such that
	$U\subset\Hutch_c(U)$ for the difference IFS\@. Then $U\subset\interior E(c,N)$.
\end{lemma}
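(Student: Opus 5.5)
The idea is to show that every point of $U$ is the image, under an infinite composition of forward maps, of points that stay in $U$. That makes $U\subset E(c,N)$, and openness of $U$ then gives $U\subset\interior E(c,N)$.

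Fix $z\in U$. By the self-covering property \eqref{eq:self-covering-forward} we choose $t_1\in A_N$ with $z_1:=g_{t_1}(z)\in U$. Inductively we choose $t_{k+1}$ with $z_{k+1}:=g_{t_{k+1}}(z_k)\in U$. This gives an infinite word $t_1t_2\cdots$ and a sequence $(z_k)\subset U$ with
\[
    z=f_{t_1t_2\cdots t_k}(z_k)\qquad\text{for all }k.
\]
Expanding the composition gives
\[
    z=\sum_{j=1}^{k} t_j c^{-(j-1)} + c^{-k}z_k .
\]
Since $U$ is bounded, say $|w|\le R$ on $U$, the remainder satisfies $|c^{-k}z_k|\le R|c|^{-k}\to0$ because $|c|>1$. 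Hence
\[
    z=\sum_{j\ge0} t_{j+1}c^{-j},
\]
and this lies in $E(c,N)$ by the series representation \eqref{eq:series} with $A_N$ in place of $A_n$.

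A second route is via Hutchinson iteration. Monotonicity gives $U\subset\Hutch_c^k(U)\subset\Hutch_c^k(\overline U)$ for every $k$. The sets $\Hutch_c^k(\overline U)$ converge to $E(c,N)$ in the Hausdorff metric, and $E(c,N)$ is closed, so $U\subset E(c,N)$. Either argument works; I would use the explicit address argument since it is self-contained.

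Finally, $U$ is open and contained in $E(c,N)$, so $U\subset\interior E(c,N)$ by the definition of the interior. The only point needing care is the use of boundedness: it controls the tail $c^{-k}z_k$ uniformly. Without it the backward orbit could escape and the argument would fail. I expect no genuine obstacle here.
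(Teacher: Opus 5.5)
Your proposal is correct and follows essentially the same argument as the paper. You build a backward orbit that stays in $U$, expand it into the series with remainder $c^{-k}z_k$, use boundedness of $U$ to kill the tail, and then use openness to pass from $U\subset E(c,N)$ to $U\subset\interior E(c,N)$. Your alternative Hutchinson-iteration route is also valid, but the paper uses only the address argument.
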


\begin{proof}
	Fix $z_0\in U$. Since $U\subset\Hutch_c(U)$, there exist sequences $(t_k)_{k=0}^\infty\subset A_N$ and $(z_k)_{k=0}^\infty\subset U$ such that $z_k=f_{t_k}(z_{k+1})$ for all $k\ge0$. Induction gives
	\[
		z_0 = \sum_{j=0}^{m-1} t_j c^{-j} + z_m c^{-m}\qquad(m\ge1).
	\]
	Because $U$ is bounded, the remainder term $z_m c^{-m}$ tends to $0$ as $m\to\infty$. Hence $z_0=\sum_{j=0}^\infty t_j c^{-j}\in E(c,N)$ by the series representation \eqref{eq:series} with digit set $A_N$. Since $z_0$ was arbitrary and $U$ is open, we conclude that $U\subset\interior E(c,N)$.
\end{proof}

We now show how the nearest admissible digit rule produces such self-covering parallelograms. For $z\in\C$ with canonical coordinates $(s,v)$, set
\[
	\alpha:=\frac{\rho}{y}\,s.
\]
Then \Cref{lem:dynamics} gives $v'=\rho s-yt = y(\alpha-t)$. Thus any digit $t\in A_N$ satisfying $|\alpha-t|\le 1$ automatically yields $|v'|\le |y|$.

\subsection{Nearest digits and one-step return}
\begin{definition}[Nearest admissible digit rule]\label{def:NE}
	For any real number $r\in\R$, let $t^*(r)\in A_N$ denote an element of $A_N$ minimizing the distance to $r$, with a fixed tie-breaking convention toward $0$. For a point $z$ with canonical slanted coordinate $s=\ls(z;c)$, define $\alpha:=\frac{\rho}{y}\,s$ and choose the digit $t=t^*(\alpha)$. We call this assignment the \emph{nearest admissible digit rule}.
\end{definition}

\begin{lemma}[Nearest admissible digit]\label{lem:nearest-even-digit}
	The difference alphabet is $A_N=\{-N+1,-N+3,\dots,N-1\}$. If a real number $r\in\R$ satisfies $|r|\le N$, then there exists an admissible digit $t\in A_N$ such that $|r-t|\le 1$. In particular, the parity-constrained choice $t^*(r)$ from \Cref{def:NE} satisfies $|r-t^*(r)|\le 1$.
\end{lemma}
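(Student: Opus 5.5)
The lemma concerns only the arithmetic progression $A_N=\{-N+1,-N+3,\dots,N-1\}$, which has step $2$ and extreme elements $\pm(N-1)$. The plan is to show that the closed unit intervals $[t-1,t+1]$, $t\in A_N$, cover $[-N,N]$, and then to read off the statement about $t^*$.

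First, confirm the description of $A_N$. Since $A_N=A_n-A_n=2\Dn$ and $N=2n-1$, we have $2\Dn=\{-2n+2,\dots,2n-2\}$ in steps of $2$. This equals $\{-N+1,\dots,N-1\}$ because $N-1=2n-2$.

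Next, the covering. Consecutive digits $t$ and $t+2$ give intervals $[t-1,t+1]$ and $[t+1,t+3]$ that share the endpoint $t+1$. Hence the union over $t\in A_N$ is the single interval $[-(N-1)-1,\,(N-1)+1]=[-N,N]$. Concretely, given $r$ with $|r|\le N$, we can set $t:=2\lfloor (r+N-1)/2+\tfrac12\rfloor-(N-1)$, clamped into $[-(N-1),N-1]$. One then checks $|r-t|\le1$: in the interior range this is ordinary rounding to the lattice $-(N-1)+2\Z$, and at the ends $|r|\le N$ means the clamping costs at most distance $1$.

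Finally, the claim about $t^*(r)$. By definition, $t^*(r)$ minimizes $|r-t|$ over $A_N$, so $|r-t^*(r)|\le|r-t|\le1$ for the digit $t$ just found. The tie-breaking convention is irrelevant to this bound.

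There is no real obstacle here. The only point needing care is the endpoint case $|r|$ close to $N$, where the nearest lattice point of $-(N-1)+2\Z$ may lie outside $A_N$. This is exactly where the hypothesis $|r|\le N$ is used, since the extreme digit $\pm(N-1)$ is then within distance $1$ of $r$.
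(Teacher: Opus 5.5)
Your proposal is correct and matches the paper's argument: both use that $A_N$ is a step-$2$ progression with extreme digits $\pm(N-1)$, so the radius-$1$ neighborhoods of its elements cover $[-N,N]$, with the endpoints handled by $t=\pm(N-1)$. Your explicit rounding formula, and the remark that minimality of $t^*(r)$ gives $|r-t^*(r)|\le|r-t|\le 1$, only spell out the same argument in more detail.
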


\begin{proof}
	The set $A_N$ forms an arithmetic progression with step $2$. Hence, every point in the interval
	$[-N,N]$ lies within distance $1$ of some element of $A_N$; at the boundary points, one may take
	$t=\pm(N-1)$.
\end{proof}

\begin{lemma}[Vertical control]\label{lem:NE-properties}
	Let $\alpha=\rho s/y$, and let $t\in A_N$ satisfy $|\alpha-t|\le 1$.
	Then
	\[
		|v'|=|\rho s-yt|\le |y|.
	\]
	If in addition $|y|<\mathcal V$, then $|v'|<\mathcal V$.
\end{lemma}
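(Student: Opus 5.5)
The plan is to use the formula for the vertical update from \Cref{lem:dynamics} and then bound it using the hypothesis on the digit. \Cref{lem:dynamics} gives the backward iterate $z'=g_t(z)$ the vertical coordinate $v'=\rho s-yt$, where $s=\ls(z;c)$. Since $c\notin\R$ we have $y\neq0$, so $\alpha=\rho s/y$ is well defined and $\rho s=y\alpha$.

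The key step is the factorization
\[
	v'=\rho s-yt=y\alpha-yt=y(\alpha-t).
\]
Taking absolute values gives $|v'|=|y|\,|\alpha-t|$. The hypothesis $|\alpha-t|\le1$ then yields $|v'|\le|y|$, which is the first claim.

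The second claim follows by chaining with the strict inequality: if $|y|<\mathcal V$, then $|v'|\le|y|<\mathcal V$.

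There is no real obstacle here. The only thing to check is that $y\ne0$, which holds because we work with non-real $c$ throughout the construction. In the applications, the digit $t$ with $|\alpha-t|\le1$ will be supplied by \Cref{lem:nearest-even-digit}, whenever $|\alpha|\le N$. This lemma itself assumes the existence of such a $t$, so it does not need that input.
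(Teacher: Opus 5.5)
Your proof is correct and matches the paper's argument: you factor $v'=\rho s-yt=y(\alpha-t)$, take absolute values to get $|v'|=|y|\,|\alpha-t|\le|y|$, and chain with $|y|<\mathcal V$ for the strict bound. Your remark that $y\neq0$ (so that $\alpha$ is well defined) is a sensible addition; the paper leaves it implicit in its standing assumption $c\notin\R$.
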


\begin{proof}
	Since $v'=\rho s-yt=y(\alpha-t)$, we have $|v'|=|y|\,|\alpha-t|\le |y|$. The second statement is immediate.
\end{proof}

Suppose now that $z\in\mathcal P_c(\mathcal S,\mathcal V)$, so $|s|<\mathcal S$ and $|v|<\mathcal V$. If a digit $t\in A_N$ satisfies $|\alpha-t|\le1$, then \Cref{lem:NE-properties} gives $|v'|\le |y|$, while \Cref{lem:dynamics} yields
\[
	|s'|\le \rho|v|+\frac{2|x|}{\rho}|v'|
	< \rho\mathcal V + \frac{2|x||y|}{\rho}.
\]
The next proposition unifies these two estimates into a one-step self-covering criterion.

\begin{proposition}[A nearest-digit self-covering criterion]\label{prop:C-NE}
	Assume the standing assumptions and the following inequalities:
	\begin{enumerate}[label=(\roman*), leftmargin=*]
		\item $|y|<\mathcal V$;
		\item $\rho\mathcal S \le N|y|$;
		\item $\rho\mathcal V + \dfrac{2|x||y|}{\rho}\le \mathcal S$.
	\end{enumerate}
	Then the parallelogram $\mathcal P_c(\mathcal S,\mathcal V)$ is self-covering in the
	sense of \Cref{def:self-covering-parallelogram}. More precisely, the nearest admissible digit rule defines a strategy
	$\tau:\mathcal P_c(\mathcal S,\mathcal V)\to A_N$ such that
	\[
		g_{\tau(z)}(z)\in\mathcal P_c(\mathcal S,\mathcal V)\qquad\text{for all }z\in\mathcal P_c(\mathcal S,\mathcal V).
	\]
\end{proposition}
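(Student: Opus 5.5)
Fix $z\in\mathcal P_c(\mathcal S,\mathcal V)$ with canonical coordinates $(s,v)$, so $|s|<\mathcal S$ and $|v|<\mathcal V$. Set $\alpha:=\rho s/y$ and define $\tau(z):=t^*(\alpha)$ by the nearest admissible digit rule of \Cref{def:NE}. I will verify that $z'=g_{\tau(z)}(z)$ has coordinates $(s',v')$ with $|s'|<\mathcal S$ and $|v'|<\mathcal V$. The order of steps is: admissibility of the digit, then the vertical bound, then the slanted bound.

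\textbf{Step 1: the digit is admissible and close to $\alpha$.} Using hypothesis (ii),
\[
|\alpha|=\frac{\rho|s|}{|y|}<\frac{\rho\mathcal S}{|y|}\le N .
\]
Hence \Cref{lem:nearest-even-digit} applies with $r=\alpha$ and gives $|\alpha-t^*(\alpha)|\le 1$. This is the only place where (ii) enters, and it is the step to watch. It is what guarantees that the nearest digit stays within distance $1$, rather than being clipped at $\pm(N-1)$ with a larger error. Since $|\alpha|\le N$ holds even in the non-strict case, the boundary case is covered by the lemma.

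\textbf{Step 2: vertical bound.} By \Cref{lem:NE-properties}, the bound $|\alpha-t|\le 1$ gives $|v'|\le|y|$. Hypothesis (i) then gives $|v'|<\mathcal V$.

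\textbf{Step 3: slanted bound.} By \Cref{lem:dynamics}, $s'=\frac{2x}{\rho}v'-\rho v$. Applying the triangle inequality together with $|v'|\le|y|$ and $|v|<\mathcal V$,
\[
|s'|\le \frac{2|x||y|}{\rho}+\rho|v|<\frac{2|x||y|}{\rho}+\rho\mathcal V\le \mathcal S .
\]
The last inequality is hypothesis (iii). The strict inequality comes from $|v|<\mathcal V$ and $\rho>0$.

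Steps 2 and 3 together show $z'\in\mathcal P_c(\mathcal S,\mathcal V)$. Since $g_{\tau(z)}(z)\in\mathcal P_c$ and $z=f_{\tau(z)}(g_{\tau(z)}(z))$, the point $z$ lies in $\bigcup_t f_t(\mathcal P_c)$. This is the self-covering property \eqref{eq:self-covering-forward}.

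No step is genuinely hard. The only delicate point is keeping the strict and non-strict inequalities consistent, so that the image lands in the open parallelogram. This works because every strictness is inherited from the strict bounds $|s|<\mathcal S$ and $|v|<\mathcal V$, together with (i).
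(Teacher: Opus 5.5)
Your proof is correct and follows the paper's argument essentially line for line. You define $\tau(z)=t^*(\rho s/y)$, use (ii) to get $|\alpha|<N$ so that \Cref{lem:nearest-even-digit} gives $|\alpha-\tau(z)|\le 1$, and then obtain $|v'|\le|y|<\mathcal V$ from (i) and $|s'|<\mathcal S$ from the shear formula of \Cref{lem:dynamics} together with (iii); the nonemptiness required by \Cref{def:self-covering-parallelogram} is immediate, since $0\in\mathcal P_c(\mathcal S,\mathcal V)$.
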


\begin{proof}
	Define
	\[
		\tau(z):=t^*\!\left(\frac{\rho}{y}\,\ls(z;c)\right)
		\qquad\bigl(z\in\mathcal P_c(\mathcal S,\mathcal V)\bigr).
	\]
	Let $z\in\mathcal P_c(\mathcal S,\mathcal V)$ with $(s,v)=(\ls(z;c),\lv(z))$, and set $\alpha:=\rho s/y$. Then $\tau(z)=t^*(\alpha)$. By the digit-range condition,
	\[
		|\alpha|=\frac{\rho|s|}{|y|}<\frac{\rho\mathcal S}{|y|}\le N.
	\]
	Hence \Cref{lem:nearest-even-digit} gives $|\alpha-\tau(z)|\le1$, and \Cref{lem:NE-properties} yields $|v'|\le |y|<\mathcal V$.

	For the slanted coordinate, \Cref{lem:dynamics} gives
	\[
		|s'|
		=\left|\frac{2x}{\rho}v'-\rho v\right|
		\le \frac{2|x|}{\rho}|v'|+\rho|v|
		< \frac{2|x||y|}{\rho}+\rho\mathcal V
		\le \mathcal S.
	\]
	Thus $g_{\tau(z)}(z)\in\mathcal P_c(\mathcal S,\mathcal V)$ for every $z\in\mathcal P_c(\mathcal S,\mathcal V)$, so $\mathcal P_c(\mathcal S,\mathcal V)$ is self-covering by \eqref{eq:self-covering-forward}.
\end{proof}

\subsection{The lens and the canonical trap}
The preceding criterion is governed by the single inequality $\rho^2+2|x|<N$. We now isolate the corresponding region of parameter space.

\begin{definition}[Lens]\label{def:lens}
	Let $N=2n-1$ and write $c=x+iy$ with $\rho:=|c|$. Define
	\[
		\Xn := \{\,c\in\C\setminus\D : \rho^2 + 2|x|< N\,\}.
	\]
	Equivalently, $\Xn=\{\,c\in\C\setminus\D : |c\pm1|< \sqrt{2n}\,\}$, so $\Xn$ is the intersection of the two radius-$\sqrt{2n}$ disks centered at $\pm1$, with the closed unit disk removed; see~\Cref{fig:Xn}.
\end{definition}

\begin{figure}[!htbp]
	\centering
	\begin{tikzpicture}[scale=0.78, line cap=round, line join=round, >=Stealth, font=\small]
		\pgfmathsetmacro{\n}{8}
		\pgfmathsetmacro{\N}{2*\n - 1}
		\pgfmathsetmacro{\rad}{sqrt(\N+1)}

		\tikzset{
			lensfill/.style={fill=gray!17},
			diskboundary/.style={draw=gray!65, line width=0.75pt},
			lensboundary/.style={draw=black, line width=1.0pt},
			unitdisk/.style={draw=gray!50!black, line width=0.85pt, dash pattern=on 2.2pt off 1.7pt}
		}

		\draw[axis] (-\rad-1.2,0) -- (\rad+1.2,0) node[axislabel, pos=1, below right] {$\Real$};
		\draw[axis] (0,-\rad-1.0) -- (0,\rad+1.0) node[axislabel, pos=1, above left] {$\Imag$};

		\begin{scope}
			\clip (-1,0) circle (\rad);
			\fill[lensfill] (1,0) circle (\rad);
		\end{scope}

		\draw[diskboundary] (-1,0) circle (\rad);
		\draw[diskboundary] (1,0) circle (\rad);
		\begin{scope}
			\clip (1,0) circle (\rad);
			\draw[lensboundary] (-1,0) circle (\rad);
		\end{scope}
		\begin{scope}
			\clip (-1,0) circle (\rad);
			\draw[lensboundary] (1,0) circle (\rad);
		\end{scope}

		\fill[white] (0,0) circle (1);
		\draw[unitdisk] (0,0) circle (1);

		\fill (-1,0) circle (2pt);
		\fill (1,0) circle (2pt);

		\node[font=\small] at (0,2.55) {$\Xn$};
		\node[font=\footnotesize] at (0.36,0.46) {$\D$};
	\end{tikzpicture}
	\caption{Lens $\Xn=\{\,c\in\C\setminus\D:\ |c\pm1|< \sqrt{2n}\,\}$, where the nearest admissible digit criterion is feasible (\Cref{prop:NE-feasible-lens}).}
	\label{fig:Xn}
\end{figure}

\begin{proposition}[Feasibility of the nearest-digit criterion]\label{prop:NE-feasible-lens}
	Let $c=x+iy$ with $|c|>1$ and $y\neq 0$, set $\rho=|c|$, and $N=2n-1$.
	There exist half-widths $\mathcal S,\mathcal V>0$ such that
	$\mathcal P_c(\mathcal S,\mathcal V)$ is a \emph{nonempty open self-covering set}
	for the difference IFS, with self-covering certified by \Cref{prop:C-NE},
	if and only if
	\begin{equation}\label{eq:strict-lens-ineq}
		\rho^2+2|x|<N.
	\end{equation}
	Equivalently, this holds if and only if $c\in\Xn$.
\end{proposition}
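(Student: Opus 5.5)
The plan is to show that the three conditions of \Cref{prop:C-NE} admit a common solution $(\mathcal S,\mathcal V)$ exactly when $\rho^2+2|x|<N$. Nonemptiness and openness are automatic for any $\mathcal S,\mathcal V>0$: the parallelogram is the preimage of an open rectangle under a linear isomorphism (since $y\neq0$, the functionals $\ls,\lv$ are independent). So the question reduces to feasibility of the inequality system
\[
	|y|<\mathcal V,\qquad \rho\mathcal S\le N|y|,\qquad \rho\mathcal V+\frac{2|x||y|}{\rho}\le\mathcal S .
\]

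\emph{Necessity.} Suppose $(\mathcal S,\mathcal V)$ satisfies the system. I would chain (i), (iii) and (ii) into the single strict bound
\[
	\rho|y|+\frac{2|x||y|}{\rho}<\rho\mathcal V+\frac{2|x||y|}{\rho}\le\mathcal S\le\frac{N|y|}{\rho}.
\]
The first inequality is strict because $\rho>0$ and $|y|<\mathcal V$. Multiplying through by $\rho/|y|>0$, which is allowed since $y\neq0$, gives $\rho^2+2|x|<N$.

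\emph{Sufficiency.} Assume $\rho^2+2|x|<N$. I would take $\mathcal S:=N|y|/\rho$, so that (ii) holds with equality. Then I would choose $\mathcal V$ slightly above $|y|$, specifically
\[
	\mathcal V:=|y|+\varepsilon,\qquad 0<\varepsilon\le\frac{|y|\,(N-\rho^2-2|x|)}{\rho^2}.
\]
Such an $\varepsilon$ exists because the upper bound is positive. Condition (i) is then immediate. For (iii), compute
\[
	\rho\mathcal V+\frac{2|x||y|}{\rho}=\frac{|y|(\rho^2+2|x|)}{\rho}+\rho\varepsilon\le\frac{N|y|}{\rho}=\mathcal S .
\]
With these choices, \Cref{prop:C-NE} certifies that $\mathcal P_c(\mathcal S,\mathcal V)$ is self-covering.

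\emph{Equivalence with the lens.} Expanding gives $|c\pm1|^2=\rho^2\pm2x+1$. Hence $\max(|c+1|^2,|c-1|^2)=\rho^2+2|x|+1$, and $\rho^2+2|x|<N=2n-1$ is equivalent to $|c\pm1|^2<2n$. Together with $|c|>1$, this is exactly membership in $\Xn$ by \Cref{def:lens}.

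I expect no real obstacle here. The only point needing care is the strictness in the necessity direction, which comes from the strict inequality (i), together with making sure the chosen $\varepsilon$ keeps (iii) non-strict.
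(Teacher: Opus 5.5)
Your proof is correct and follows the paper's argument. The necessity chain (i)$\Rightarrow$(iii)$\Rightarrow$(ii), rescaled by $\rho/|y|$, is exactly the paper's; for sufficiency you saturate the digit-range condition (ii) and take $\mathcal V$ slightly above $|y|$, whereas the paper saturates the shear bound (iii) and leaves (ii) strict, which is an immaterial variation (your extremal choice $\varepsilon=|y|(N-\rho^2-2|x|)/\rho^2$ saturates both and reproduces exactly the canonical trap of \Cref{def:canonical-trap}).
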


\begin{proof}
	Assume \Cref{prop:C-NE}(i)--(iii). Multiplying the shear bound by $\rho$ and using the digit-range condition gives
	\[
		\rho^2\mathcal V + 2|x||y|
		\ \le\ \rho\mathcal S
		\ \le\ N|y|.
	\]
	Dividing by $|y|$ yields
	\[
		\rho^2\frac{\mathcal V}{|y|}+2|x|\le N.
	\]
	Since the vertical margin gives $1<\mathcal V/|y|$, we obtain
	\[
		\rho^2+2|x|
		<\rho^2\frac{\mathcal V}{|y|}+2|x|
		\le N,
	\]
	so $\rho^2+2|x|<N$.

	Conversely, assume $\rho^2+2|x|<N$. Choose $\mathcal V>|y|$ sufficiently close to $|y|$ that $\rho^2\mathcal V+2|x||y|<N|y|$ still holds, and set
	\[
		\mathcal S:=\rho\mathcal V+\frac{2|x||y|}{\rho}.
	\]
	Then the vertical margin holds by construction, the shear bound holds with equality, and the digit-range condition follows from
	\[
		\rho\mathcal S=\rho^2\mathcal V+2|x||y|<N|y|.
	\]
	Hence all hypotheses of \Cref{prop:C-NE} are satisfied, and $\mathcal P_c(\mathcal S,\mathcal V)$ is a nonempty open self-covering set.
\end{proof}

Inside $\Xn$ the half-widths are therefore no longer arbitrary. The canonical choice is obtained by saturating the digit-range condition and the shear bound in \Cref{prop:C-NE}.

\begin{definition}[Canonical trap]\label{def:canonical-trap}
	For $c=x+iy$ and $N=2n-1$, let
	\begin{equation}\label{eq:VhTrap-opt}
		\mathcal S(c,N) = \frac{N\,|y|}{\rho}, \qquad
		\mathcal V(c,N) = \frac{(N-2|x|)\,|y|}{\rho^2}.
	\end{equation}
	The \emph{canonical trap} is
	\[
		\PTrap(c,N) := \mathcal P_c(\mathcal S(c,N),\mathcal V(c,N)).
	\]
\end{definition}

\begin{corollary}[Sharp validity of the canonical trap]\label{cor:canonical-validity}
	Let $c=x+iy$ with $|c|>1$ and $y\neq 0$, set $\rho:=|c|$, and let $N=2n-1$.
	With $\mathcal S(c,N),\mathcal V(c,N)$ as in \eqref{eq:VhTrap-opt}, one has:
	\begin{enumerate}[label=(\roman*), leftmargin=*]
		\item \textup{(Nonemptiness)} $\PTrap(c,N)\neq\varnothing$ if and only if $\mathcal V(c,N)>0$,
		      equivalently $2|x|<N$.

		\item \textup{(Sharp characterization)} $\PTrap(c,N)$ is a nonempty open self-covering set for the difference IFS
		      if and only if $c\in\Xn$, i.e.\ $\rho^2+2|x|<N$.
		      On the boundary $\rho^2+2|x|=N$ one has $\mathcal V(c,N)=|y|$ and $\PTrap(c,N)$ is \emph{not} self-covering.
	\end{enumerate}
\end{corollary}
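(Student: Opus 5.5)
The plan is to verify everything directly from the explicit formulas \eqref{eq:VhTrap-opt}, using \Cref{prop:C-NE} for the positive direction. For the negative direction I will exhibit an explicit point of $\PTrap(c,N)$ with no admissible inverse branch returning it to $\PTrap(c,N)$. Throughout, $y\neq0$ and $\rho>1$.

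\emph{Part (i).} Since $y\neq0$ and $N\ge3$, we always have $\mathcal S(c,N)=N|y|/\rho>0$. In $(s,v)$-coordinates the trap is the open rectangle $\{|s|<\mathcal S,\ |v|<\mathcal V\}$, so it is nonempty exactly when $\mathcal V(c,N)>0$. Because $|y|/\rho^2>0$, this is equivalent to $N-2|x|>0$.

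\emph{Part (ii), the direction ``$c\in\Xn$ implies self-covering''.} I will check the three hypotheses of \Cref{prop:C-NE} for $\mathcal S=\mathcal S(c,N)$ and $\mathcal V=\mathcal V(c,N)$.
\begin{itemize}
\item Condition (ii) holds with equality, since $\rho\mathcal S=N|y|$.
\item Condition (iii) also holds with equality, since $\rho\mathcal V+2|x||y|/\rho=(N-2|x|)|y|/\rho+2|x||y|/\rho=N|y|/\rho=\mathcal S$.
\item Condition (i), namely $|y|<\mathcal V$, is equivalent after dividing by $|y|$ to $\rho^2<N-2|x|$. This is exactly the lens inequality of \Cref{def:lens}.
\end{itemize}
Hence on $\Xn$ the trap is self-covering by \Cref{prop:C-NE}. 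It is nonempty by (i), because $\rho^2+2|x|<N$ forces $2|x|<N$, and it is open by definition. The same computation shows that on the boundary $\rho^2+2|x|=N$ we get $\mathcal V=|y|$.

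\emph{Part (ii), the direction ``self-covering implies $c\in\Xn$'', together with the boundary claim.} This is the only step needing an idea, since failure of the sufficient criterion of \Cref{prop:C-NE} does not by itself rule out self-covering. Suppose $\rho^2+2|x|\ge N$, so that $\mathcal V\le|y|$. If $\mathcal V\le0$ the trap is empty and there is nothing to prove, so assume $\mathcal V>0$.
\begin{itemize}
\item Take the point $z$ with canonical coordinates $(s,v)=(y/\rho,0)$, so that $\alpha=\rho s/y=1$.
\item It lies in the trap: $|s|=|y|/\rho<N|y|/\rho=\mathcal S$ and $|v|=0<\mathcal V$.
\item Every digit of $A_N$ is even, because $N-1$ is even. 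Hence for every $t\in A_N$ we have $|1-t|\ge1$.
\item By \Cref{lem:dynamics}, $|v'|=|y|\,|1-t|\ge|y|\ge\mathcal V$, so $g_t(z)\notin\PTrap(c,N)$ for every $t\in A_N$.
\end{itemize}
Thus $\PTrap(c,N)$ is not self-covering in the sense of \eqref{eq:self-covering-forward}. This covers in particular the boundary case $\rho^2+2|x|=N$, and completes the equivalence.
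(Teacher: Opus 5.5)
Your proposal is correct and follows the paper's proof essentially verbatim: part (i) and the forward direction of (ii) both verify the hypotheses of \Cref{prop:C-NE} in the same way. For the converse and the boundary case, your point with canonical coordinates $(s,v)=(y/\rho,0)$ is exactly $z=1$, which is the paper's witness, and the parity argument $|\ell_v(g_t(1))|=|y|\,|1-t|\ge|y|\ge\mathcal V$ is the same.
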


\begin{proof}
	Let $\mathcal S:=\mathcal S(c,N)$ and $\mathcal V:=\mathcal V(c,N)$ as in \eqref{eq:VhTrap-opt}.

	\emph{(i)} Since $y\neq0$, one has $\mathcal S>0$. Thus $\PTrap(c,N)$ is nonempty if and only if $\mathcal V>0$, that is, if and only if $N-2|x|>0$, equivalently $2|x|<N$.

	\smallskip
	\noindent\emph{(ii)} Suppose first that $\rho^2+2|x|<N$. Then $\mathcal V>|y|$, because $\frac{N-2|x|}{\rho^2}>1$. Moreover,
	\[
		\rho\,\mathcal S=N|y|,
		\qquad
		\mathcal S=\rho\,\mathcal V+\frac{2|x||y|}{\rho}
	\]
	show that the digit-range condition and the shear bound in \Cref{prop:C-NE} hold with equality. Hence \Cref{prop:C-NE} applies, and $\PTrap(c,N)$ is a nonempty open self-covering set.

	Conversely, suppose $\rho^2+2|x|\ge N$. Assume that $\mathcal V>0$. Since $N-2|x|\le \rho^2$, the explicit formula for $\mathcal V$ gives $\mathcal V\le |y|$. The point $z=1$ lies in $\PTrap(c,N)$ because $\lv(1)=0<\mathcal V$ and $|\ls(1;c)|=\frac{|y|}{\rho}<\frac{N|y|}{\rho}=\mathcal S$.

	For any $t\in A_N$, the number $1-t$ is odd, hence $|1-t|\ge1$. Therefore
	\[
		|\lv(g_t(1))|
		=|\Imag(c(1-t))|
		=|y|\,|1-t|
		\ge |y|
		\ge \mathcal V.
	\]
	Thus $g_t(1)\notin\PTrap(c,N)$ for every $t\in A_N$, so $\PTrap(c,N)$ is not self-covering. If $\rho^2+2|x|=N$, then $\mathcal V=|y|$, and the same argument shows failure of self-covering on the boundary as well.
\end{proof}

\begin{corollary}[Trap lies in the interior of the attractor]\label{cor:self-covering}
	If $c\in\Xn\setminus\R$, then the canonical trap $\PTrap(c,N)$ is a nonempty open
	self-covering set for the difference IFS\@. In particular,
	\[
		\PTrap(c,N)\subset \interior E(c,N).
	\]
\end{corollary}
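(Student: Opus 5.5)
This follows directly from \Cref{cor:canonical-validity} combined with \Cref{lem:self-covering-interior}. Fix $c=x+iy\in\Xn\setminus\R$, so that $y\neq0$, $|c|>1$ and $\rho^2+2|x|<N$.

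The first step is to invoke \Cref{cor:canonical-validity}(ii). It gives that $\PTrap(c,N)$ is a nonempty open self-covering set. The nonemptiness can also be seen directly. The inequality $2|x|<\rho^2+2|x|<N$ gives $\mathcal V(c,N)>0$, and $\mathcal S(c,N)>0$ because $y\neq0$.

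Next I verify the hypotheses of \Cref{lem:self-covering-interior}. That lemma requires $U=\PTrap(c,N)$ to be nonempty, bounded and open, and to satisfy $U\subset\Hutch_c(U)$.

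\begin{itemize}
\item \emph{Open.} In the $(s,v)$-coordinates $U$ is defined by strict inequalities on the continuous linear functionals $\ls(\cdot;c)$ and $\lv$.
\item \emph{Bounded.} Since $y\neq0$, the map $z\mapsto(\ls(z;c),\lv(z))$ is an invertible real-linear map of $\R^2$. Its determinant in $(u,v)$ is $y/\rho\neq0$. So $U$ is the preimage of a bounded rectangle under an invertible linear map, hence a bounded parallelogram.
\item \emph{Self-covering.} By \eqref{eq:self-covering-forward}, the condition $U\subset\Hutch_c(U)$ is exactly the self-covering property. This is supplied by the nearest admissible digit strategy $\tau$ of \Cref{prop:C-NE}: for each $z\in U$, one has $g_{\tau(z)}(z)\in U$, i.e.\ $z=f_{\tau(z)}(g_{\tau(z)}(z))\in f_{\tau(z)}(U)$.
\end{itemize}

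Applying \Cref{lem:self-covering-interior} then gives $\PTrap(c,N)\subset\interior E(c,N)$.

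There is no substantive obstacle here. The one point needing care is boundedness, which the lemma requires so that the remainder $z_mc^{-m}$ tends to $0$; it holds precisely because $c\notin\R$.
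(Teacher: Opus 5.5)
Your proposal is correct and follows the paper's proof exactly: the paper also invokes \Cref{cor:canonical-validity}(ii) for the nonempty, open, self-covering property and then applies \Cref{lem:self-covering-interior}. Your explicit check that $\PTrap(c,N)$ is bounded (the coordinate map has determinant $y/\rho\neq0$) is a detail the paper leaves implicit, and it is right to include it.
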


\begin{proof}
	If $c\in\Xn\setminus\R$, then $\PTrap(c,N)$ is self-covering by
	\Cref{cor:canonical-validity}(ii). The interior inclusion follows from
	\Cref{lem:self-covering-interior}.
\end{proof}

\Cref{fig:parallelogram-iterates} illustrates the basic mechanism: the base trap is already contained in its first forward iterate,
\[
	\PTrap(c,N)\subset\Hutch_c\bigl(\PTrap(c,N)\bigr).
\]

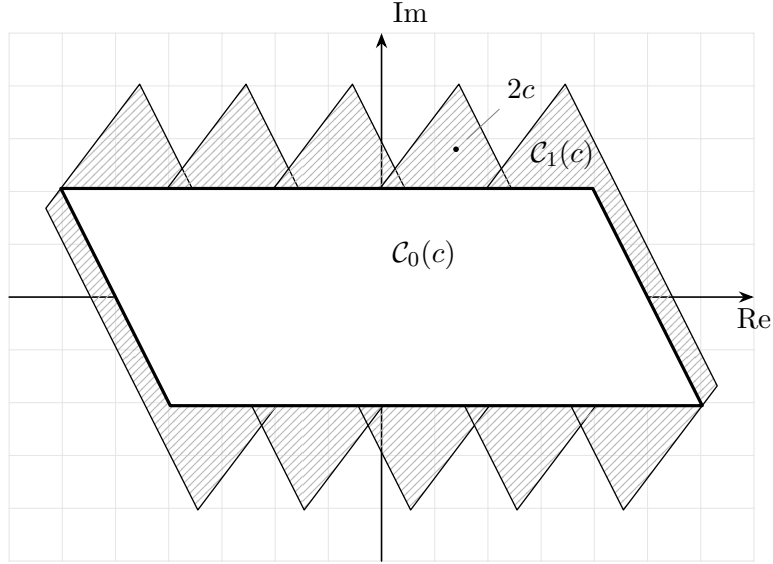
\begin{figure}[!htbp]
	\centering
	\begin{tikzpicture}[scale=0.70, line cap=round, line join=round, >=Stealth, font=\small]
		\pgfmathsetmacro{\cxval}{0.7}
		\pgfmathsetmacro{\cyval}{1.4}
		\pgfmathsetmacro{\nval}{3}

		\pgfmathsetmacro{\xmin}{-7} \pgfmathsetmacro{\xmax}{7}
		\pgfmathsetmacro{\ymin}{-5} \pgfmathsetmacro{\ymax}{5}
		\draw[step=1, thin, color=black!10] (\xmin,\ymin) grid (\xmax,\ymax);
		\draw[axis] (\xmin,0) -- (\xmax,0) node[below] {$\Real$};
		\draw[axis] (0,\ymin) -- (0,\ymax) node[above right] {$\Imag$};

		\DrawPone[cover1]{\cxval}{\cyval}{\nval}
		\DrawTrapP[trap0]{\cxval}{\cyval}{\nval}

		\node at (0.8, 0.8) {$\PTrap_0(c)$};
		\node at (3.4, 2.7) {$\PTrap_1(c)$};
		\fill (2*\cxval, 2*\cyval) circle (1.5pt) node[pin=45:{$2c$}] {};
	\end{tikzpicture}
	\caption{Canonical trap and first forward iterate in phase space for $n=3$ ($N=5$) and $c=0.7+1.4i$.
		The lighter region is $\PTrap_0(c):=\PTrap(c,N)$ and the darker region is $\PTrap_1(c)=\Hutch_c(\PTrap_0(c))$.
		The inclusion $\PTrap_0(c)\subset\PTrap_1(c)$ illustrates the self-covering mechanism.}
	\label{fig:parallelogram-iterates}
\end{figure}

\subsection{The exact canonical enclosure}
For exterior certification we use an explicit canonical parallelogram containing $E(c,N)$. The vertical half-width is the basic quantity; the slanted half-width is recovered exactly from it.

\begin{definition}[Canonical enclosure]\label{def:Enc}
	Let $c=x+iy\in\C\setminus(\Dbar\cup\R)$, and write $c=\rho e^{i\theta}$. Define
	\begin{equation}\label{eq:VE-series-Y}
		\VE(c,N):=(N-1)\sum_{k=1}^{\infty}\bigl|\lv(c^{-k})\bigr|
		=(N-1)\sum_{k=1}^{\infty}\rho^{-k}\,|\sin(k\theta)|.
	\end{equation}
	Also set
	\begin{equation}\label{eq:SE-from-VE-def}
		\SE(c,N):=(N-1)\sum_{k=0}^{\infty}\bigl|\ls(c^{-k};c)\bigr|
		=(N-1)\frac{|y|}{\rho}+\frac{1}{\rho}\VE(c,N).
	\end{equation}
	The \emph{canonical enclosure} is
	\[
		\PEnc(c,N):=\overline{\mathcal P_c\!\bigl(\SE(c,N),\,\VE(c,N)\bigr)}.
	\]
\end{definition}

\begin{proposition}[Exact canonical enclosure]\label{prop:enc-exact}
	Let $c=x+iy$ with $|c|>1$ and $y\ne0$. Then
	\[
		\sup_{z\in E(c,N)} |\lv(z)| = \VE(c,N),
		\qquad
		\sup_{z\in E(c,N)} |\ls(z;c)| = \SE(c,N).
	\]
	Consequently,
	\[
		E(c,N)\subseteq \PEnc(c,N),
	\]
	and $\PEnc(c,N)$ is the unique smallest closed canonical parallelogram containing $E(c,N)$.
\end{proposition}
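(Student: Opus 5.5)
The plan is to compute the supremum of a real linear functional over the attractor directly from the series representation \eqref{eq:series} with digit set $A_N$. Every $z\in E(c,N)$ has the form $z=\sum_{k\ge0}a_kc^{-k}$ with $a_k\in A_N=\{-(N-1),\dots,N-1\}$. Since both $\lv$ and $\ls(\cdot;c)$ are $\R$-linear and the digits $a_k$ are real, we have
\[
\lv(z)=\sum_{k\ge0}a_k\,\lv(c^{-k}),\qquad \ls(z;c)=\sum_{k\ge0}a_k\,\ls(c^{-k};c),
\]
and both series converge absolutely because $|c|>1$.

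For the upper bound, the triangle inequality together with $|a_k|\le N-1$ gives $|\lv(z)|\le (N-1)\sum_{k\ge0}|\lv(c^{-k})|$. The $k=0$ term vanishes because $\lv(1)=0$, which yields exactly $\VE(c,N)$. The same argument gives $|\ls(z;c)|\le (N-1)\sum_{k\ge0}|\ls(c^{-k};c)|=\SE(c,N)$.

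For attainment, I choose the digits $a_k=(N-1)\sgn(\lv(c^{-k}))$, with the convention $a_k=0$ (or $N-1$) when the sign is zero. These are admissible, since $\pm(N-1)\in A_N$ and $0\in A_N$ because $N$ is odd. The resulting point lies in $E(c,N)$ and realizes $\lv(z)=\VE(c,N)$. The analogous choice realizes $\SE(c,N)$, so both suprema are in fact maxima.

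Next I verify the closed form in \eqref{eq:SE-from-VE-def}. By \Cref{def:functionals-canonical}, $\ls(w;c)=\Imag(cw)/|c|$. Hence $\ls(c^{-k};c)=\Imag(c^{1-k})/\rho=\lv(c^{-(k-1)})/\rho$ for $k\ge1$, while the $k=0$ term is $\ls(1;c)=y/\rho$. Summing absolute values gives
\[
\sum_{k\ge0}|\ls(c^{-k};c)|=\frac{|y|}{\rho}+\frac1\rho\Bigl(|\lv(1)|+\sum_{j\ge1}|\lv(c^{-j})|\Bigr),
\]
and since $\lv(1)=0$, multiplying by $N-1$ produces the stated formula. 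Writing $c^{-k}=\rho^{-k}e^{-ik\theta}$ also recovers the trigonometric form of $\VE$.

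From the bounds, every $z\in E(c,N)$ satisfies $|\ls|\le\SE$ and $|\lv|\le\VE$. Therefore $z$ lies in the closed rectangle in $(s,v)$-coordinates, which equals $\PEnc(c,N)$, the closure of the open parallelogram.

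For minimality, take any closed canonical parallelogram $\{|\ls|\le S',\,|\lv|\le V'\}$ containing $E(c,N)$. It must satisfy $S'\ge\SE$ and $V'\ge\VE$, since these maxima are attained. So $\PEnc(c,N)$ is contained in it, which gives uniqueness of the smallest such set.

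The main subtlety is only bookkeeping: the index shift relating $\ls(c^{-k};c)$ to $\lv(c^{1-k})$, and checking that the extremal sign-choice digits lie in $A_N$. There is no genuine analytic obstacle. One point to state explicitly is that "canonical parallelogram" here means one centered at the origin, since the minimality claim is made among parallelograms of the form $\mathcal P_c(\cdot,\cdot)$.
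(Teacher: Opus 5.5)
Your proposal is correct and follows essentially the same route as the paper: bound a real-linear functional termwise from the series representation using $|a_k|\le N-1$, attain the bound with the sign-choice digits $(N-1)\sgn(\varphi(c^{-k}))\in A_N$, obtain the closed form of $\SE$ from the index shift $\ls(c^{-k};c)=\lv(c^{-(k-1)})/\rho$ together with $\lv(1)=0$, and deduce minimality from attainment. One small point to make explicit: $\VE,\SE>0$ when $y\neq0$, since the $k=1$ term of $\VE$ is positive, and this is what makes the closure of $\mathcal P_c(\SE,\VE)$ equal to the closed rectangle $\{|s|\le\SE,\ |v|\le\VE\}$.
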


\begin{proof}
	Let $\varphi:\C\to\R$ be a real-linear functional. If $z=\sum_{k=0}^{\infty} a_k c^{-k}\in E(c,N)$ with $a_k\in A_N$, then
	\[
		|\varphi(z)|
		\le \sum_{k=0}^{\infty}|a_k|\,|\varphi(c^{-k})|
		\le (N-1)\sum_{k=0}^{\infty}|\varphi(c^{-k})|.
	\]
	This bound is sharp. Because $0,\pm(N-1)\in A_N$, choosing
	\[
		a_k:=(N-1)\sgn\!\bigl(\varphi(c^{-k})\bigr)\in A_N \qquad (\text{with }\sgn(0)=0)
	\]
	defines a point $z_\varphi:=\sum_{k=0}^{\infty} a_k c^{-k}\in E(c,N)$ for which equality holds. Hence
	\begin{equation}\label{eq:support-functional-E}
		\sup_{z\in E(c,N)} |\varphi(z)|
		=(N-1)\sum_{k=0}^{\infty}|\varphi(c^{-k})|.
	\end{equation}

	Applying \eqref{eq:support-functional-E} first to $\varphi=\lv$ and using $\lv(1)=0$ gives
	\[
		\sup_{z\in E(c,N)} |\lv(z)|
		=(N-1)\sum_{k=1}^{\infty}|\lv(c^{-k})|
		=\VE(c,N).
	\]
	For $\varphi=\ls(\,\cdot\,;c)$, we use $\ls(1;c)=\frac{y}{\rho}$ and $\ls(c^{-k};c)=\frac{\Imag(c^{1-k})}{\rho}=\frac{1}{\rho}\,\lv(c^{-(k-1)})$ for $k\ge1$. Then
	\begin{align*}
		\sup_{z\in E(c,N)} |\ls(z;c)|
			 & =(N-1)\sum_{k=0}^{\infty}|\ls(c^{-k};c)|                                  \\
			 & =(N-1)\frac{|y|}{\rho} + \frac{N-1}{\rho}\sum_{m=1}^{\infty}|\lv(c^{-m})| \\
			 & =\SE(c,N).
	\end{align*}

	Thus every $z\in E(c,N)$ satisfies $|\lv(z)|\le \VE(c,N)$ and $|\ls(z;c)|\le \SE(c,N)$, so $E(c,N)\subseteq \PEnc(c,N)$. Finally, if $\overline{\mathcal P_c(\mathcal S,\mathcal V)}$ is any closed canonical parallelogram containing $E(c,N)$, then necessarily $\mathcal V\ge \VE(c,N)$ and $\mathcal S\ge \SE(c,N)$. Therefore $\PEnc(c,N)$ is the unique smallest closed canonical parallelogram containing $E(c,N)$.
\end{proof}

\begin{theorem}[Canonical trap--enclosure framework]\label{thm:trap-enclosure-framework}
	Let $n\ge2$, set $N:=2n-1$, and let $c\in\Xn\setminus\R$. Then the canonical trap $\PTrap(c,N)$ is a nonempty open self-covering set and the canonical enclosure $\PEnc(c,N)$ is the unique smallest closed canonical parallelogram containing $E(c,N)$. In particular,
	\[
		\PTrap(c,N)\subset \interior E(c,N)\subset \PEnc(c,N).
	\]
\end{theorem}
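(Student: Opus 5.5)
The statement to prove is \Cref{thm:trap-enclosure-framework}. It is essentially an assembly of results already established, and the plan is to combine them.

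Fix $n\ge2$, $N=2n-1$, and $c=x+iy\in\Xn\setminus\R$. Then $|c|>1$ and $y\neq0$, which are the standing hypotheses of everything in \Cref{sec:trap}.

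\emph{Trap.} By \Cref{def:lens} we have $\rho^2+2|x|<N$. This is exactly the condition in \Cref{cor:canonical-validity}(ii), so $\PTrap(c,N)$ is a nonempty open self-covering set. Concretely, $\mathcal V(c,N)=(N-2|x|)|y|/\rho^2>|y|>0$, and conditions (i)--(iii) of \Cref{prop:C-NE} hold, with (ii) and (iii) as equalities. The trap is also bounded, since it is a parallelogram with finite half-widths. Being self-covering means $U\subset\Hutch_c(U)$ by \eqref{eq:self-covering-forward}, so \Cref{lem:self-covering-interior} applies and gives $\PTrap(c,N)\subset\interior E(c,N)$. This is precisely \Cref{cor:self-covering}.

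\emph{Enclosure.} \Cref{prop:enc-exact} needs only $|c|>1$ and $y\ne0$. It gives $E(c,N)\subseteq\PEnc(c,N)$, and it shows that $\PEnc(c,N)$ is the unique smallest closed canonical parallelogram containing the attractor. One should check that $\SE$ and $\VE$ are finite, which holds because the series converge geometrically as $\rho>1$. Then $\interior E(c,N)\subset E(c,N)\subseteq\PEnc(c,N)$, and chaining this with the trap inclusion gives the displayed chain.

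\emph{Main obstacle.} There is no real obstacle here. The one point needing care is that \Cref{lem:self-covering-interior} requires boundedness and the forward-covering form $U\subset\Hutch_c(U)$, and both are immediate from \eqref{eq:self-covering-forward} and the finite half-widths. A minor side remark: the chain also implies that $E(c,N)$ has nonempty interior for $c\in\Xn\setminus\R$.
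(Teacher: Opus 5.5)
Your proposal is correct and follows essentially the same route as the paper. The paper also takes the trap statement from \Cref{cor:self-covering} (via \Cref{cor:canonical-validity}(ii) and \Cref{lem:self-covering-interior}) and the enclosure statement from \Cref{prop:enc-exact}, then chains the two inclusions.
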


\begin{proof}
	The trap statement is \Cref{cor:self-covering}, and the enclosure statement is \Cref{prop:enc-exact}. Combining them gives the displayed inclusions.
\end{proof}
\section{Finite capture and inverse certification}\label{sec:certification}

The canonical trap and enclosure turn membership in $\Mn$ into a finite-depth certification problem. Trap entry supplies interior certificates, enclosure extinction supplies exterior certificates, and together they define the capture sets $\Theta_k(n)$, the survival sets $\Xi_k(n)$, and the finite-capture locus $\Theta_n$.

\subsection{Interior and exterior certification}
The backward search compares $2c$ with two canonical parallelograms. Since $E(c,N)\subseteq\PEnc(c,N)$, if no enclosure-admissible branch survives to some finite depth, then $2c\notin E(c,N)$, hence $c\notin \Mn$. On the lens, $\PTrap(c,N)\subset \operatorname{int}E(c,N)$, so if some backward branch satisfies $g_u(2c)\in \PTrap(c,N)$, then $2c\in f_u(\PTrap(c,N))\subset E(c,N)$, and therefore $c\in \operatorname{int}(\Mn)$.

\begin{definition}[Enclosure-admissible words]\label{def:enclosure-admissible}
	Let $m\ge0$. A word $u\in A_N^{m}$ is \emph{$\PEnc$-admissible} if every prefix iterate remains inside the parallelogram:
	\[
		g_{u'}(2c)\in\PEnc(c,N)
		\qquad\text{for every prefix }u'\preceq u.
	\]
	For $m=0$, this means $u=\emptyword$ and $2c\in \PEnc(c,N)$.
\end{definition}

If $2c\in E(c,N)$, then $2c$ has an infinite address $(t_1,t_2,\dots)$ with every prefix point in $E(c,N)\subseteq\PEnc(c,N)$. Hence at every depth there exists at least one $\PEnc$-admissible word. In particular, if at some depth $k$ there are no $\PEnc$-admissible words, then necessarily $2c\notin E(c,N)$ and therefore $c\notin\Mn$ by \Cref{prop:Mn_characterization}. This is the mechanism behind the \Exterior\ verdict in \Cref{alg:geom}.

\subsubsection*{Trap entry (interior certificate).}
If $c\in\Xn\setminus\R$ and $g_u(2c)\in\PTrap(c,N)$ for some word $u$, then $g_u(2c)\in\interior E(c,N)$ by \Cref{cor:self-covering}. Applying $f_u$ gives $2c\in E(c,N)$, and hence $c\in\Mn$ by \Cref{prop:Mn_characterization}.

Given upper bounds $\widehat{\SE}\ge \SE(c,N)$ and $\widehat{\VE}\ge \VE(c,N)$, define the associated approximate enclosure
\[
	\widehat{\PEnc}(c,N):=\overline{\mathcal P_c(\widehat{\SE},\widehat{\VE})}.
\]
Then \Cref{prop:enc-exact} yields $E(c,N)\subseteq \PEnc(c,N)\subseteq \widehat{\PEnc}(c,N)$.

\subsection{Capture–survival sets}
By \Cref{prop:Mn_characterization}, determining whether $c\in\Mn$ amounts to deciding whether the
marked point $2c$ belongs to $E(c,N)$. The inverse-iteration perspective of \Cref{alg:geom}
therefore naturally gives rise to two exact finite-depth notions: backward capture of $2c$ into the trap, and
survival of the enclosure-pruned search tree. These encode, respectively, finite-word \Interior\
certificates and finite-depth obstructions to \Exterior.

\begin{remark}
	The sets introduced below are defined using the exact canonical parallelograms $\PTrap(c,N)$ and
	$\PEnc(c,N)$. They are independent of the auxiliary bounds $\widehat{\SE},\widehat{\VE}$ and of the
	node cap $L_{\max}$ used in \Cref{alg:geom}.
\end{remark}

\begin{definition}[Finite-capture sets and capture time]\label{def:levels}
	For $k\ge0$ define the \emph{finite-capture set}
	\[
		\Theta_k(n)
		:=\Bigl\{\,c\in\Xn\setminus\R:\ \exists\,u\in A_N^{\le k}\ \text{such that}\ g_u(2c)\in\PTrap(c,N)\,\Bigr\}.
	\]
	Define the \emph{capture time}
	\[
		k(c):=\min\{\,m\ge0:\ \exists\,u\in A_N^{m}\ \text{such that}\ g_u(2c)\in\PTrap(c,N)\,\},
	\]
	with the convention that $k(c)=\infty$ if the set is empty. Finally, define the
	\emph{finite-capture locus}
	\[
		\Theta_n:=\bigcup_{k\ge0}\Theta_k(n)
		=\{\,c\in\Xn\setminus\R:\ k(c)<\infty\,\}.
	\]
\end{definition}

\begin{definition}[Depth-$k$ survival sets]\label{def:survival}
	For $k\ge0$ define the \emph{depth-$k$ survival set}
	\[
		\Xi_k(n)
		:=\Bigl\{\,c\in\C\setminus(\Dbar\cup\R):\ \exists\,u\in A_N^{k}\ \text{enclosure-admissible}\Bigr\}.
	\]
\end{definition}

\begin{remark}
	The survival sets are nested: $\Xi_{k+1}(n)\subseteq \Xi_k(n)$ for all $k\ge0$, since any admissible
	word has an admissible prefix.
\end{remark}

\begin{proposition}[Finite-depth extinction gives an exterior certificate]\label{prop:depth-k-exterior-certificate}
	If $c\in\C\setminus(\Dbar\cup\R)$ satisfies $c\notin \Xi_k(n)$ for some $k\ge0$, then $c\notin\Mn$.
	Equivalently,
	\[
		\Mn\setminus\R \subseteq \Xi_k(n)\qquad(k\ge0).
	\]
\end{proposition}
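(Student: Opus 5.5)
The plan is to prove the contrapositive form directly: assume $c\in\Mn\setminus\R$ and produce, for the given $k$, an enclosure-admissible word of length exactly $k$, so that $c\in\Xi_k(n)$. By \Cref{prop:Mn_characterization}, $c\in\Mn$ means $2c\in E(c,N)$. Using the series representation \eqref{eq:series} with digit set $A_N$, we write
\[
2c=\sum_{j\ge0}t_{j+1}c^{-j},\qquad t_j\in A_N .
\]
We then take $u=t_1t_2\cdots t_k$ and check that every prefix iterate stays in $\PEnc(c,N)$.

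The key computation is that backward iteration shifts addresses. For $z=\sum_{j\ge0}a_jc^{-j}\in E(c,N)$ we have
\[
g_{a_0}(z)=c(z-a_0)=\sum_{j\ge0}a_{j+1}c^{-j},
\]
which again lies in $E(c,N)$ by \eqref{eq:series}. By induction on the prefix length $m\le k$, we get
\[
g_{t_1\cdots t_m}(2c)=\sum_{j\ge0}t_{m+j+1}c^{-j}\in E(c,N).
\]
The case $m=0$ is just $2c\in E(c,N)$. The prefix convention of \Cref{def:gu}, namely $g_{uv}=g_v\circ g_u$, makes the induction consistent with applying $g_{t_1}$ first.

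Next we invoke \Cref{prop:enc-exact}, which gives $E(c,N)\subseteq\PEnc(c,N)$ because $y\neq0$ and $|c|>1$. Hence every prefix iterate lies in $\PEnc(c,N)$, so $u$ is $\PEnc$-admissible in the sense of \Cref{def:enclosure-admissible}. Since $c\notin\Dbar\cup\R$, we conclude $c\in\Xi_k(n)$, which gives $\Mn\setminus\R\subseteq\Xi_k(n)$. The contrapositive is the stated exterior certificate.

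No step is a serious obstacle. The only point needing care is the shift identity and its compatibility with the word-composition convention, which is a one-line verification. This argument is essentially the remark preceding \Cref{def:survival}, made precise.
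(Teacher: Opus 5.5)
Your proposal is correct and follows the paper's proof: take an address of $2c\in E(c,N)$, note that every prefix iterate $g_{t_1\cdots t_m}(2c)$ stays in $E(c,N)\subseteq\PEnc(c,N)$, and conclude that the length-$k$ prefix is enclosure-admissible. Your explicit shift identity $g_{t_1\cdots t_m}(2c)=\sum_{j\ge0}t_{m+j+1}c^{-j}$ simply spells out the step the paper states in one line.
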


\begin{proof}
	Assume $c\in\Mn\setminus\R$. By \Cref{prop:Mn_characterization}, $2c\in E(c,N)$, so $2c$ has an address
	$(t_1,t_2,\dots)$ with $g_{t_1\cdots t_j}(2c)\in E(c,N)$ for all $j\ge0$. Since
	$E(c,N)\subseteq\PEnc(c,N)$ (\Cref{prop:enc-exact}), every prefix word is enclosure-admissible. In
	particular, the length-$k$ prefix is admissible, hence $c\in\Xi_k(n)$.
\end{proof}

\begin{corollary}[Survival characterizes $\Mn$ off the real axis]
	For $c\in\C\setminus(\Dbar\cup\R)$,
	\[
		c\in\Mn
		\quad\Longleftrightarrow\quad
		c\in\bigcap_{k\ge0}\Xi_k(n).
	\]
\end{corollary}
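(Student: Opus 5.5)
One direction is \Cref{prop:depth-k-exterior-certificate}: if $c\in\Mn\setminus\R$, then $c\in\Xi_k(n)$ for every $k$. So the work lies in the converse. Take $c\in\C\setminus(\Dbar\cup\R)$ with $c\in\Xi_k(n)$ for all $k\ge0$; the goal is $2c\in E(c,N)$, after which \Cref{prop:Mn_characterization} gives $c\in\Mn$. I will use a compactness (K\"onig's lemma) argument on the tree of enclosure-admissible words.

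First, set up the tree. Its nodes are the $\PEnc$-admissible words $u$, and the children of $u$ are the words $ut$ with $t\in A_N$. The tree is finitely branching, since $|A_N|=N$. By \Cref{def:enclosure-admissible}, admissibility is closed under taking prefixes, so the admissible words form a subtree rooted at $\emptyword$. The hypothesis $c\in\Xi_k(n)$ for all $k$ says this subtree has nodes at every depth, so it is infinite. K\"onig's lemma then gives an infinite sequence $(t_1,t_2,\dots)\in A_N^{\N}$ all of whose prefixes are admissible, i.e.\ $z_j:=g_{t_1\cdots t_j}(2c)\in\PEnc(c,N)$ for every $j\ge0$.

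Next, convert the branch into an address. Since $z_{j-1}=f_{t_j}(z_j)$, induction as in the proof of \Cref{lem:self-covering-interior} gives
\[
	2c=\sum_{j=1}^{m} t_j c^{-(j-1)}+z_m c^{-m}\qquad(m\ge1).
\]
The set $\PEnc(c,N)$ is a bounded closed parallelogram, so $|z_m|$ is uniformly bounded, and $|c|>1$ sends the remainder to $0$. Hence $2c=\sum_{j\ge0}t_{j+1}c^{-j}\in E(c,N)$ by \eqref{eq:series} with digit set $A_N$. Concretely, boundedness means finite half-widths: $\VE(c,N)\le (N-1)\sum_k\rho^{-k}<\infty$, and then $\SE(c,N)<\infty$ by \eqref{eq:SE-from-VE-def}.

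The only delicate points are modest. One must check that the hypothesis $y\neq0$ makes the canonical coordinates legitimate, and that $(\ls,\lv)$ is a linear isomorphism, so that a bounded $(s,v)$-rectangle is bounded in $\C$. The K\"onig step needs finite branching, which holds here. No lens assumption is needed, and the statement is purely qualitative, so no quantitative step is expected to be an obstacle.
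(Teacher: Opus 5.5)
Your proposal is correct and follows the paper's proof essentially verbatim. The forward direction is \Cref{prop:depth-k-exterior-certificate}, and the converse applies K\"onig's lemma to the finitely branching, prefix-closed tree of $\PEnc$-admissible words, then unwinds the infinite branch into a series address whose remainder $z_m c^{-m}$ tends to $0$. Your additional checks fill in details the paper leaves implicit: that $\VE$ and $\SE$ are finite, and that $(\ls,\lv)$ is a linear isomorphism when $y\neq0$, so that $\PEnc(c,N)$ is bounded.
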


\begin{proof}
	The implication $c\in\Mn\Rightarrow c\in\bigcap_k\Xi_k(n)$ is \Cref{prop:depth-k-exterior-certificate}.

	Conversely, assume $c\in\bigcap_{k\ge0}\Xi_k(n)$.
	Consider the rooted tree whose vertices are enclosure-admissible words, with edges given by
	one-digit extension. Because the tree is finitely branching and possesses a vertex at every depth, K\H{o}nig's lemma yields an infinite branch $(t_1,t_2,\dots)$.
	Let $z_m:=g_{t_1\cdots t_m}(2c)$. Then $z_m\in\PEnc(c,N)$ for all $m$, hence $(z_m)$ is bounded.
	Unwinding $z_{m+1}=c(z_m-t_{m+1})$ gives, for every $m\ge1$,
	\[
		2c=t_1+\frac{t_2}{c}+\cdots+\frac{t_m}{c^{m-1}}+\frac{z_m}{c^{m}}.
	\]
	Since $|c|>1$ and $(z_m)$ is bounded, the last term tends to $0$ as $m\to\infty$, so
	$2c=\sum_{j\ge1} t_j c^{-(j-1)}\in E(c,N)$. Therefore $c\in\Mn$ by \Cref{prop:Mn_characterization}.
\end{proof}

\begin{proposition}[Capture--survival bounds]\label{prop:Theta-Xi-bounds}
	Let $k\ge0$. On $\Xn\setminus\R$ one has
	\[
		\Theta_k(n)\subset \Mn \subset \Xi_k(n).
	\]
\end{proposition}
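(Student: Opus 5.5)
The statement is a direct combination of the interior certificate given by the trap and the exterior certificate given by the enclosure. I would prove the two inclusions separately, for a fixed $k\ge0$ and a fixed $c\in\Xn\setminus\R$.

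\emph{First inclusion, $\Theta_k(n)\subset\Mn$.} Let $c\in\Theta_k(n)$.
\begin{enumerate}[label=(\arabic*),leftmargin=2.2em]
	\item By \Cref{def:levels}, there is a word $u\in A_N^{\le k}$ with $g_u(2c)\in\PTrap(c,N)$.
	\item Since $c\in\Xn\setminus\R$, \Cref{cor:self-covering} gives $\PTrap(c,N)\subset\interior E(c,N)\subset E(c,N)$.
	\item Apply $f_u=g_u^{-1}$. Using the invariance $\Hutch_c(E(c,N))=E(c,N)$, we have $f_u(E(c,N))\subset E(c,N)$ for every word $u$, including $u=\emptyword$. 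Hence $2c=f_u(g_u(2c))\in E(c,N)$.
	\item By \Cref{prop:Mn_characterization}, $c\in\Mn$.
\end{enumerate}

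\emph{Second inclusion, $\Mn\subset\Xi_k(n)$ on $\Xn\setminus\R$.} This is exactly \Cref{prop:depth-k-exterior-certificate}, restricted to the lens. Every $c\in\Xn\setminus\R$ satisfies $|c|>1$ and $y\ne0$, so $\Xn\setminus\R\subset\C\setminus(\Dbar\cup\R)$. This is the domain on which $\Xi_k(n)$ is defined, so the proposition applies without change.

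There is no real obstacle here. The only point needing care is that the forward maps $f_t$ send $E(c,N)$ into itself; this follows from the fixed-point property of the Hutchinson operator.
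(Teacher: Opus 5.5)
Your proposal is correct and follows the paper's proof: trap entry plus $\PTrap(c,N)\subset\interior E(c,N)$ (\Cref{cor:self-covering}), then applying $f_u$ and \Cref{prop:Mn_characterization} for the first inclusion, and \Cref{prop:depth-k-exterior-certificate} for the second. Your explicit justification of $f_u(E(c,N))\subset E(c,N)$ from $\Hutch_c(E(c,N))=E(c,N)$, and of $\Xn\setminus\R\subset\C\setminus(\Dbar\cup\R)$, only spells out steps the paper leaves implicit.
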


\begin{proof}
	If $c\in\Theta_k(n)$, then by \Cref{def:levels} there exists $u\in A_N^{\le k}$ with
	$g_u(2c)\in\PTrap(c,N)$. Since $\PTrap(c,N)\subset\interior E(c,N)$ by
	\Cref{cor:self-covering}, applying $f_u$ gives $2c\in E(c,N)$ and hence $c\in\Mn$ by
	\Cref{prop:Mn_characterization}. The inclusion $\Mn\subset\Xi_k(n)$ is
	\Cref{prop:depth-k-exterior-certificate}.
\end{proof}

\Cref{fig:B2_n20} shows a typical finite-depth comparison between capture and survival.

\begin{figure}[!htbp]
	\centering
	\MaybeIncludeGraphic[width=.9\textwidth]{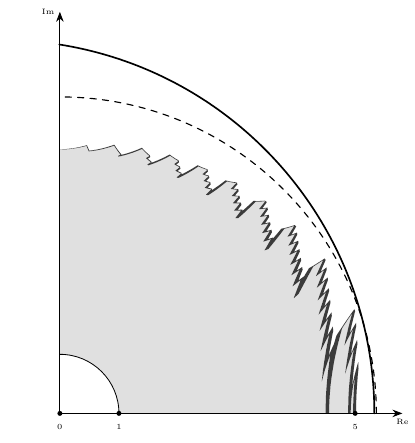}{First-quadrant view of the finite-capture set $\Theta_2(20)$.}
	\caption{First-quadrant view of the finite-capture set
		$\Theta_2(20)$. The darker region is $\Xi_2(20)\setminus\Theta_2(20)$; it contains $\partial\mathcal M_{20}\setminus\R$.}
	\label{fig:B2_n20}
\end{figure}

\subsection{The base capture set}
On $\Xn\setminus\R$, the canonical trap provides a uniform interior parallelogram contained in the difference attractor. The base layer of the capture filtration consists of those parameters for which the marked point $2c$ already lies in this parallelogram:
\[
	\Theta_0(n):=\{\,c\in\Xn\setminus\R : 2c\in\PTrap(c,N)\,\}.
\]

\begin{proposition}[Geometry of $\Theta_0(n)$]\label{prop:Theta0}
	For $n\ge3$, one has
	\begin{equation}
		\Theta_0(n)=\{c\in\Xn\setminus\R:\ \rho^2+|x|<N/2\},
		\qquad c=x+iy,\ \rho=|c|,\ N=2n-1.
	\end{equation}
	Equivalently, $\Theta_0(n)$ is the intersection of the two open disks
	\[
		\left\{|c-\tfrac12|<\sqrt{n-\tfrac14}\right\}
		\qquad\text{and}\qquad
		\left\{|c+\tfrac12|<\sqrt{n-\tfrac14}\right\},
	\]
	with the closed unit disk and the real axis removed.
\end{proposition}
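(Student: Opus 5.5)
The plan is a direct computation: write the marked point $2c$ in the canonical coordinates of \Cref{def:functionals-canonical} and compare with the half-widths \eqref{eq:VhTrap-opt} of \Cref{def:canonical-trap}. With $c=x+iy$ and $\rho=|c|$ we have $\lv(2c)=2y$. Since $\Imag(c\cdot 2c)=2\Imag(c^2)=4xy$, we also have $\ls(2c;c)=4xy/\rho$. Hence, for $c\in\Xn\setminus\R$, the condition $2c\in\PTrap(c,N)$ is the conjunction of two inequalities:
\[
|2y|<\frac{(N-2|x|)|y|}{\rho^2}
\qquad\text{and}\qquad
\frac{4|x||y|}{\rho}<\frac{N|y|}{\rho}.
\]
Dividing by $|y|>0$, these become $\rho^2+|x|<N/2$ (vertical condition) and $4|x|<N$ (slanted condition).

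The main step is to show that the vertical condition implies the slanted one; this is exactly where $n\ge3$ enters. I will argue by contradiction. Suppose $|x|\ge N/4$. Then $\rho^2+|x|\ge x^2+|x|\ge N^2/16+N/4$. This quantity is at least $N/2$ if and only if $N^2/16\ge N/4$, that is, $N\ge4$, which holds since $N=2n-1\ge5$. So $\rho^2+|x|<N/2$ forces $4|x|<N$, and $\Theta_0(n)$ is cut out by the single inequality $\rho^2+|x|<N/2$ inside $\Xn\setminus\R$.

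It is also worth recording that this inequality already implies the lens condition: $\rho^2+2|x|\le 2(\rho^2+|x|)<N$. Therefore intersecting with $\Xn$ amounts only to removing $\D$ and the real axis.

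For the disk description, I will split $\rho^2+|x|<N/2$ into the two inequalities $\rho^2-x<N/2$ and $\rho^2+x<N/2$. Completing the square gives
\[
\bigl|c\mp\tfrac12\bigr|^2=\rho^2\mp x+\tfrac14<\tfrac N2+\tfrac14=n-\tfrac14,
\]
which is the stated intersection of the two disks. I expect no real obstacle. The only delicate point is the implication between the two inequalities, and checking that it genuinely requires $N\ge4$. For $n=2$ ($N=3$) it can fail, which explains the hypothesis $n\ge3$.
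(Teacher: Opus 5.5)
Your proposal is correct and is essentially the paper's proof. It uses the same canonical-coordinate computation $\lv(2c)=2y$, $\ls(2c;c)=4xy/\rho$, the same contradiction via $x^2+|x|\ge N^2/16+N/4\ge N/2$, and the same completion of squares; your one-line bound $\rho^2+2|x|\le 2(\rho^2+|x|)<N$ is a slightly more direct substitute for the paper's triangle-inequality check $|c\pm1|\le|c\pm\tfrac12|+\tfrac12<\sqrt{2n}$. One correction to your closing aside: on the relevant domain $|c|>1$ the implication does not fail for $n=2$, since $\rho^2+|x|<3/2$ with $\rho>1$ already forces $|x|<1/2<3/4$, so what breaks at $N=3$ is only your $x^2$-based estimate, and the hypothesis $n\ge3$ is an artifact of that argument rather than of the statement.
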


\begin{proof}
	Let $c=x+iy$ with $y\neq0$ and $\rho=|c|$. By definition, $2c\in\PTrap(c,N)$ if and only if $|\lv(2c)|<\mathcal V(c,N)$ and $|\ls(2c;c)|<\mathcal S(c,N)$.

	The vertical inequality is
	\[
		|2y| < \frac{(N-2|x|)|y|}{\rho^2}
		\quad\Longleftrightarrow\quad
		\rho^2+|x|<\frac{N}{2}.
	\]
	The slanted inequality is
	\[
		\frac{|\,\Imag(2c^2)\,|}{\rho} < \frac{N|y|}{\rho}
		\quad\Longleftrightarrow\quad
		4|x|<N.
	\]

	For $n\ge3$, hence $N\ge5$, the vertical condition already implies the slanted one. Indeed, from $\rho^2+|x|<N/2$ and $\rho^2\ge x^2$ we get $x^2+|x|<N/2$. If $|x|\ge N/4$, then
	\[
		x^2+|x|\ge \frac{N^2+4N}{16}\ge \frac{N}{2},
	\]
	a contradiction. Thus $|x|<N/4$, i.e.\ $4|x|<N$.

	Finally, $\rho^2+|x|<N/2$ is equivalent to the pair of inequalities $\rho^2\pm x<N/2$. Completing the square gives
	\[
		x^2 \pm x + y^2 < \frac{2n-1}{2}
		\quad\Longleftrightarrow\quad
		\left(x \pm \frac{1}{2}\right)^2 + y^2 < n - \frac{1}{4},
	\]
	which is exactly the stated intersection of the two disks.
    Conversely, if $c$ lies in both disks, then
\[
	|c\pm1|\le |c\pm\tfrac12|+\tfrac12
	< \sqrt{n-\tfrac14}+\tfrac12
	< \sqrt{2n},
\]
so $c\in\Xn$; removing $\D\cup\R$ then gives the stated description.
\end{proof}

\begin{remark}[The case $n=2$]\label{rem:Theta0-n2}
	When $n=2$ (so $N=3$), the vertical trap condition $|2y|<\mathcal V(c,3)$ is still equivalent to
	$\rho^2+|x|<\tfrac{3}{2}$, but the slanted condition is no longer redundant: one must also impose
	$4|x|<3$. Thus
	\[
		\Theta_0(2)
		=\Bigl\{\,c=x+iy:\ |c|>1,\ y\ne0,\ \rho^2+|x|<\tfrac{3}{2},\ \text{and}\ |x|<\tfrac{3}{4}\Bigr\}.
	\]
	Equivalently, $\Theta_0(2)$ is the intersection of the two disks
	$|c\pm\tfrac12|<\sqrt{\tfrac74}$ with the vertical strip $|x|<\tfrac34$, with $\D$ and $\R$
	removed.
\end{remark}

In particular, the set $\Theta_0(n)$ lies in the inner annulus $1<|c|<\sqrt n$; see~\Cref{fig:Theta0}.

\begin{proposition}[Inner annulus]\label{prop:inner-annulus}
	For every $n\ge2$,
	\[
		\{\,c\in\C:\ 1<|c|<\sqrt n\,\}\subset \interior(\Mn).
	\]
\end{proposition}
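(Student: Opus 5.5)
The plan is to reduce everything to membership of non-real parameters. Let $U_n:=\{\,c\in\C:\ 1<|c|<\sqrt n\,\}$, and suppose we know $U_n\setminus\R\subset\Mn$. Then every real $c\in U_n$ is a limit of non-real points of $U_n$, so $c\in\Mn$ by the closedness of $\Mn$ in $\C\setminus\D$ (\Cref{lem:Mn-closed-prelim}). Hence $U_n\subset\Mn$. Since $U_n$ is open, this gives $U_n\subset\interior(\Mn)$. It therefore suffices to prove $2c\in E(c,N)$ for non-real $c$ with $\rho=|c|<\sqrt n$ (\Cref{prop:Mn_characterization}).

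A first attempt would use the lens machinery already in place. For $n\ge6$ one has $n+2\sqrt n<2n-1$, so $\rho^2+2|x|<n+2\sqrt n<N$ and $U_n\setminus\R\subset\Xn$. The trap $\PTrap(c,N)$ is then available. However, $\Theta_0(n)$ only covers $\rho^2+|x|<N/2$, which misses the part of $U_n$ near the real axis with $|x|$ close to $\sqrt n$. Trap capture alone is therefore not uniform enough, and the small cases $2\le n\le5$ fall outside the lens argument anyway. So I would instead build a self-covering set adapted to the whole annulus and invoke \Cref{lem:self-covering-interior}. That lemma only needs a bounded open $U$ with $U\subset\Hutch_c(U)$; iterated Hutchinson operators work equally well, since $U\subset\Hutch_c^2(U)$ gives $U\subset E(c,N)$ by the same telescoping argument.

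The construction would use two inverse steps:
\[
g_{t_1t_2}(z)=c^2\bigl(z-(t_1+t_2/c)\bigr),\qquad t_1,t_2\in A_N .
\]
The two-step digits $t_1+t_2/c$ form an $N\times N$ grid. This grid is spanned by the real step $2$ and the slanted step $2/c$, whose fundamental cell has area $4|y|/\rho^2$. The region these digits fill is the parallelogram $\Pi$ spanned by $N$ and $N/c$, of area about $N^2|y|/\rho^2$. The condition $\rho^2<n$, i.e.\ $\rho^4<n^2\approx N^2/4$, is exactly the area-type condition under which $c^{-2}U$ translated by the grid can tile over a region $U$ of comparable shape. I would take $U$ to be a centrally symmetric convex set, a parallelogram or ellipse in the coordinates $(a,b)$ with $z=a+b/c$, whose scaled copy $c^{-2}U$ contains a fundamental cell of the grid. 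Then I would check that every $z\in U$ has a grid point $w$ with $z-w\in c^{-2}U$. That is the two-step analogue of \Cref{prop:C-NE}: a nearest-grid-point rule, with a range condition ensuring the nearest grid point is admissible (i.e.\ $z$ lies in the $N\times N$ block) and a covering condition ensuring the residual lands in $c^{-2}U$. Finally, I would check that $2c\in U$, or else that $2c$ enters $U$ after a controlled number of inverse steps, mirroring the trap-entry mechanism.

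The main obstacle is the covering condition. The map $z\mapsto c^{-2}z$ rotates by $-2\theta$, so $c^{-2}U$ is not aligned with the grid, and a naive inscribed-cell estimate loses constants that could push the threshold below $\sqrt n$. My first choice is a rotation-invariant $U$, namely a disk $B(0,R)$ in a metric in which the grid is nearly square. The covering radius of the grid is then computable, and the requirement reads $\rho^{-2}R\ge$ covering radius together with $R\lesssim$ inradius of $\Pi$. I would then verify that these two requirements are compatible exactly when $\rho^2<n$. If the constants do not close, I would fall back to a hybrid argument. I would use the lens trap for $n\ge6$ away from the real axis, handle the sector near $\R$ by an explicit real-coefficient interval argument, and treat $2\le n\le5$ by the certified inverse search of \Cref{sec:certification}.
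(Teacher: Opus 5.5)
Your opening reduction is sound, and it matches the only step the paper itself carries out. The paper takes $\{1<|c|<\sqrt n\}\subset\Mn$ directly from \cite[Prop.~2.5(i)]{EJSn24} and then notes that the annulus is open in $\C\setminus\D$.

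\textbf{The inclusion is never proved.} Everything else in your proposal aims at that inclusion, and none of it is established: $U$ is not specified, the covering and range inequalities are not checked (you leave open whether ``the constants close''), and the marked-point condition is deferred. The plan also puts the threshold in the wrong place.
\begin{itemize}
\item Self-covering sets are plentiful. By \Cref{cor:self-covering}, $E(c,N)$ has nonempty interior on all of $\Xn\setminus\R$. This includes parameters outside $\Mn$, such as $c=0.585+1.705i$ for $n=3$ (\Cref{fig:interior-exterior}).
\item Your area count is off by a factor $4$. The two-step digits $t_1+t_2/c$ fill roughly $\{a+b/c:\ |a|,|b|\le N-1\}$, of area about $4N^2|y|/\rho^2$. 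So the covering heuristic gives $\rho^2\lesssim N=2n-1$, not $\rho^2<n$.
\item The threshold $\sqrt n$ comes from the position of $2c$. For $c=iy$ with $|y|>1$, one has $|\lv(2c)|=2|y|$ and $\VE(c,N)=(N-1)\rho/(\rho^2-1)$, so $2c\in\PEnc(c,N)$ iff $\rho^2\le n$. Hence $iy\notin\Mn$ for $|y|>\sqrt n$.
\item Consequently, as $c\to\pm i\sqrt n$ inside the annulus, $2c$ tends to the boundary of the smallest closed canonical parallelogram containing $E(c,N)$. The statement is sharp there. Your $U$ would have to reach almost to the extreme points of $E(c,N)$, which leaves no room for the lossy inscribed-cell estimates you anticipate.
\end{itemize}

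\textbf{The fallback does not close the gap.}
\begin{itemize}
\item \Cref{alg:geom} can return \Interior{} only for $c\in\Xn\setminus\R$. For $2\le n\le5$, part of the annulus lies outside the lens; for example, $n=2$ and $c=1.3+0.1i$ give $\rho^2+2|x|=4.3>3$. No search from \Cref{sec:certification} can certify such parameters.
\item Finitely many certified boxes form a compact subset of $\interior(\Mn)$. That set stays a positive distance from $\pm i\sqrt n\notin\interior(\Mn)$, so no finite computation covers the annulus.
\item For $n\ge6$, relying on the canonical trap means proving $U_n\setminus\R\subset\Theta_n$ with unbounded capture time. By \Cref{thm:bounded-delay-closure}, $\overline{\Theta_k(n)}\cap(\Xn\setminus\R)\subset\Theta_{k+2}(n)\subset\interior(\Mn)$, which excludes $\pm i\sqrt n$ for every $k$. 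The paper proves nothing of this kind; it only shows that $\Theta_n$ is dense in $(\interior(\Mn)\cap\Xn)\setminus\R$.
\end{itemize}

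As written, the proposal is a research plan whose decisive step is missing. To make it a proof, you must either invoke the cited result from \cite{EJSn24} or give a genuine argument that $2c\in E(c,N)$ for every $c$ in the annulus.
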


\begin{proof}
	By \cite[Prop.~2.5(i)]{EJSn24},
		$\{\,c\in\C:\ 1<|c|<\sqrt n\,\}\subset \Mn$.
	Since this annulus is open in the ambient parameter space $\C\setminus\D$, it lies in $\interior(\Mn)$.
\end{proof}

\begin{figure}[!htbp]
	\centering
	\begin{tikzpicture}[scale=1.0, line cap=round, line join=round, >=Stealth, font=\small]
		\pgfmathsetmacro{\n}{8}
		\pgfmathsetmacro{\N}{2*\n-1}
		\pgfmathsetmacro{\Rzero}{sqrt(\n - 0.25)}
		\pgfmathsetmacro{\radn}{sqrt(\n)}
		\pgfmathsetmacro{\rTriv}{1 + sqrt(\n-1)}
		\pgfmathsetmacro{\rMax}{max(\radn,\rTriv)+0.5}

		\draw[->, gray!60] (-\rMax-1,0) -- (\rMax+1,0) node[above right] {$\Real$};
		\draw[->, gray!60] (0,-\rMax) -- (0,\rMax) node[above right] {$\Imag$};

		\begin{scope}
			\clip (0,-\rMax) rectangle (\rMax+1,\rMax);
			\fill[Omega0Fill, opacity=0.96] (-0.5,0) circle (\Rzero);
		\end{scope}
		\begin{scope}
			\clip (-\rMax-1,-\rMax) rectangle (0,\rMax);
			\fill[Omega0Fill, opacity=0.96] (0.5,0) circle (\Rzero);
		\end{scope}

		\draw[black, line width=0.9pt] (0,0) circle (\radn);
		\draw[dashed, Omega0Line, line width=0.9pt] (0,0) circle (\rTriv);
		\fill[white] (0,0) circle (1.0);
		\draw[dashed] (0,0) circle (1.0);
		\node at (0.3,0.52) {\small $\D$};

		\pgfmathsetmacro{\intersectY}{sqrt(\n - 0.5)}
		\pgfmathsetmacro{\thetaArc}{atan2(\intersectY,0.5)}
		\begin{scope}[shift={(-0.5,0)}]
			\draw[LevelEdge!90!black, thick, dash pattern=on 4pt off 2pt on 1pt off 2pt]
			({\thetaArc:\Rzero}) arc[start angle=\thetaArc, end angle=-\thetaArc, radius=\Rzero];
		\end{scope}
		\begin{scope}
			\clip (-\rMax-1,-\rMax) rectangle (0,\rMax);
			\draw[LevelEdge!90!black, thick] (0.5,0) circle (\Rzero);
		\end{scope}

		\node[color=black] at (0.5,1.8) {$\Theta_0(n)$};
		\fill (0,\intersectY) circle (1.6pt) node[above right] {\small $i\sqrt{N/2}$};
		\draw[dashed, gray!70, line width=0.8pt] (-0.5,0) -- (0,\intersectY);
		\fill (0,-\intersectY) circle (1.6pt);
		\draw[dashed, gray!70, line width=0.8pt] (-0.5,0) -- (0,-\intersectY);
		\fill (\Rzero-0.5,0) circle (1.5pt) node[pin=105:{$\frac{-1+\sqrt{4n-1}}{2}$}] {};
		\fill (-0.5,0) circle (1.7pt) node[right, font=\footnotesize] {\small $-\frac12$};
		\fill (\rTriv,0) circle (1.6pt) node[below right] {\small $1+\sqrt{n-1}$};
		\fill (\radn,0) circle (1.7pt) node[above right] {\small $\sqrt{n}$};
	\end{tikzpicture}
    \caption{Geometry of the base capture set $\Theta_0(n)$ (shown for $n\ge3$).
	The shaded set is $\Theta_0(n)=\{\,c=x+iy\in\Xn\setminus\R:\ \rho^2+|x|<N/2\,\}$,
	equivalently the intersection of the two disks
	$|c\pm\tfrac12|<\sqrt{n-\tfrac14}$.
	Also shown are the radial guides $|c|=\sqrt n$ and
	$|c|=1+\sqrt{n-1}$ (dashed).}
	\label{fig:Theta0}
\end{figure}

\subsection{Inverse certification algorithm}
We can now combine the two certification mechanisms into a single inverse-iteration procedure: trap entry certifies membership from the inside, while enclosure exhaustion certifies non-membership from the outside.
\begin{algorithm}[!t]
	\caption{Inverse-iteration test for $c\in\Mn$}\label{alg:geom}
	\begin{algorithmic}[1]
		\Require $c=x+iy\in\C$ with $|c|>1$ and $y\neq 0$; $n\ge 2$;
		\Statex \hspace{\algorithmicindent} max depth $k_{\max}$; max nodes $L_{\max}$; enclosure bounds $\widehat{\SE}\ge\SE$, $\widehat{\VE}\ge\VE$
		\Ensure Verdict: \Interior, \Exterior, or \Undetermined

		\State $N \gets 2n-1,\ \rho \gets |c|,\ (s_0,v_0) \gets (\ls(2c;c),\lv(2c))$
		\State $\mathsf{inLens} \gets (c\in\Xn\setminus\R)$
		\If{$\mathsf{inLens}$}
		\State Compute trap half-widths $\mathcal{S} \gets \mathcal{S}(c,N)$ and $\mathcal{V} \gets \mathcal{V}(c,N)$
		\EndIf

		\Statex \emph{Step 1: Initial node checks}
		\If{$|s_0| > \widehat{\SE}$ \textbf{or} $|v_0| > \widehat{\VE}$} \Return \Exterior \EndIf
		\If{$\mathsf{inLens}$ \textbf{and} $|s_0| < \mathcal{S}$ \textbf{and} $|v_0| < \mathcal{V}$} \Return \Interior \EndIf

		\Statex \emph{Step 2: Level-by-level traversal of the backward-orbit tree}
		\State $W \gets \{(s_0,v_0,\emptyword)\}$ \Comment{Track coordinates $(s,v)$ and address word $u$}
		\For{$k=1$ \textbf{to} $k_{\max}$}
		\State $W' \gets \emptyset$
		\ForAll{$(s,v,u) \in W$}
		\ForAll{$t \in A_N$ \textbf{such that} $|\rho s-yt| \le \widehat{\VE}$} \Comment{Vertical branch truncation}
		\State $v' \gets \rho s-yt$
		\State $s' \gets \frac{2x}{\rho}v' - \rho v$
		\If{$|s'| \le \widehat{\SE}$} \Comment{Slanted branch truncation}
		\If{$\mathsf{inLens}$ \textbf{and} $|s'| < \mathcal{S}$ \textbf{and} $|v'| < \mathcal{V}$}
		\Return \Interior
		\EndIf
		\State $W' \gets W' \cup \{(s',v',ut)\}$
		\If{$|W'| \ge L_{\max}$} \Return \Undetermined \EndIf
		\EndIf
		\EndFor
		\EndFor
		\If{$W' = \emptyset$} \Return \Exterior \EndIf \Comment{Enclosure exhausted}
		\State $W \gets W'$
		\EndFor
		\State \Return \Undetermined
	\end{algorithmic}
\end{algorithm}

\begin{proposition}[Correctness of \Cref{alg:geom}]
	If \Cref{alg:geom} returns \Interior, then $c\in\interior(\Mn)$.
	If it returns \Exterior, then $c\notin\Mn$.
\end{proposition}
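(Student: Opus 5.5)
The plan is to treat the two verdicts separately and, in each case, track exactly which geometric inclusion the algorithm has actually verified. Throughout, the algorithm runs only on non-real $c$ with $|c|>1$. By \Cref{lem:dynamics}, every stored node $(s,v,u)$ is the canonical coordinate pair of $g_u(2c)$. This follows by induction on the level, since the update $v'=\rho s-yt$, $s'=\tfrac{2x}{\rho}v'-\rho v$ is exactly the coordinate form of $g_t$.

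\emph{Interior verdict.} This verdict can only be returned when $\mathsf{inLens}$ holds, so $c\in\Xn\setminus\R$, and when some node satisfies $|s|<\mathcal S(c,N)$, $|v|<\mathcal V(c,N)$. That is, $g_u(2c)\in\PTrap(c,N)$ for some word $u$. This gives $c\in\Theta_{|u|}(n)$, and \Cref{prop:Theta-Xi-bounds} (via \Cref{cor:self-covering}) yields $c\in\Mn$. Promoting this to $c\in\interior(\Mn)$ is the only real step, and I would argue by openness. The trap half-widths $\mathcal S(c',N)$ and $\mathcal V(c',N)$ are continuous in $c'$ on $\Xn\setminus\R$, and $\Xn\setminus\R$ is open. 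The map $c'\mapsto (\ls(g_u(2c');c'),\lv(g_u(2c')))$ is continuous, since it is the polynomial $2c'P_u(c')$ of \Cref{prop:dyn-poly} composed with coordinates that are continuous in $c'$ off $\R$. The strict inequalities defining membership in the open trap therefore persist on a neighbourhood $U$ of $c$. Every $c'\in U$ is then captured by the same word $u$, so $U\subset\Theta_{|u|}(n)\subset\Mn$, and hence $c\in\interior(\Mn)$.

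\emph{Exterior verdict.} There are two cases. In the first, Step 1 finds $|s_0|>\widehat{\SE}\ge\SE(c,N)$ or $|v_0|>\widehat{\VE}\ge\VE(c,N)$. Then $2c\notin\PEnc(c,N)\supseteq E(c,N)$ by \Cref{prop:enc-exact}, so $c\notin\Mn$ by \Cref{prop:Mn_characterization}. In the second case, $W'=\emptyset$ at some level $k$. Here I would show by induction on $k$ that the stored set $W$ at each level contains every $\PEnc$-admissible word of that length; that is, the pruning never discards an admissible word. The pruning tests are $|v'|\le\widehat{\VE}$ and $|s'|\le\widehat{\SE}$, which describe membership in $\widehat{\PEnc}(c,N)\supseteq\PEnc(c,N)$. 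Because the tests use the larger approximate enclosure, every $\PEnc$-admissible extension passes them.

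One point needs care. The algorithm only returns \Undetermined through the node cap after it has already inserted the node, so the cap never causes a false \Exterior. Also, an early \Interior\ return simply ends the run. Consequently, when $W'=\emptyset$ at level $k$ there is no $\PEnc$-admissible word of length $k$, so $c\notin\Xi_k(n)$, and \Cref{prop:depth-k-exterior-certificate} gives $c\notin\Mn$.

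The main obstacle is the interior part, specifically justifying that $\interior(\Mn)$ rather than just $\Mn$ is obtained. This rests on the continuity and openness argument above, in particular on the joint continuity of the trap half-widths and the canonical coordinates in $c$ off the real axis. Everything else is bookkeeping for the two pruning tests.
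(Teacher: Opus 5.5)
Your proof is correct and follows the paper's argument. In the \Interior\ case you use the same openness of $\{c' : g_u(2c')\in\PTrap(c',N)\}$. In the \Exterior\ case you use the same induction showing that pruning against $\widehat{\PEnc}\supseteq\PEnc(c,N)$ never discards a relevant branch; the only cosmetic difference is that you prove this for all $\PEnc$-admissible words and then cite \Cref{prop:depth-k-exterior-certificate}, whereas the paper follows a single address of $2c$ in $E(c,N)$ directly.
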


\begin{proof}
	Set
	\[
		N:=2n-1,
		\qquad
		z_0:=2c,
		\qquad
		\widehat{\PEnc}:=\widehat{\PEnc}(c,N).
	\]

	\smallskip
	\noindent\emph{\Interior.}
	If the algorithm returns \Interior, it has successfully generated a word $u\in A_N^{\le k_{\max}}$ (possibly $u=\emptyword$) for which $g_u(z_0)\in \PTrap(c,N)$. This can happen only when $c\in\Xn\setminus\R$, and then \Cref{cor:self-covering} gives $\PTrap(c,N)\subset \interior E(c,N)$. Hence $g_u(z_0)\in E(c,N)$, so $z_0=f_u\bigl(g_u(z_0)\bigr)\in f_u\bigl(E(c,N)\bigr)\subset E(c,N)$. By \Cref{prop:Mn_characterization}, $c\in\Mn$.

	To see that $c$ is an interior point of $\Mn$, define
	\[
		\mathcal U_u
		:=\{\,c'\in\Xn\setminus\R:\ g_u(2c')\in\PTrap(c',N)\,\}.
	\]
	Membership in $\PTrap(c',N)$ is determined by strict inequalities in continuous functions of $c'$, so $\mathcal U_u$ is open. It contains $c$, and the same witness $u$ shows that every $c'\in\mathcal U_u$ lies in $\Mn$. Thus $\mathcal U_u\subset\Mn$, and therefore $c\in\interior(\Mn)$.

	\smallskip
	\noindent\emph{\Exterior.}
	If the algorithm returns \Exterior\ at the initial check, then $z_0\notin\widehat{\PEnc}$. Since $E(c,N)\subseteq \widehat{\PEnc}$, it follows that $z_0\notin E(c,N)$, and hence $c\notin\Mn$ by \Cref{prop:Mn_characterization}.

	Suppose instead that the algorithm returns \Exterior\ because the working set becomes empty at some depth. If $z_0\in E(c,N)$, choose an address $(t_j)_{j\ge1}\subset A_N$ for $z_0$ and set $z_m:=g_{t_1\cdots t_m}(z_0)$. Then
	\[
		z_m\in E(c,N)\subseteq \widehat{\PEnc}
		\qquad(m\ge0).
	\]
	We claim that the word $t_1\cdots t_m$ survives in the working set at depth $m$ for every $m\ge0$. This is clear for $m=0$. If the claim holds at depth $m-1$, then when the algorithm processes the node corresponding to $t_1\cdots t_{m-1}$, the digit $t_m$ is among those tested because its child is exactly $z_m$, whose canonical coordinates $(s_m,v_m)$ satisfy $|v_m|\le \widehat{\VE}$ and $|s_m|\le \widehat{\SE}$. Thus this child is not pruned and is inserted into the next working set. By induction the claim holds for all $m$, contradicting the fact that the working set becomes empty.

	Therefore $z_0\notin E(c,N)$, and \Cref{prop:Mn_characterization} gives $c\notin\Mn$.
\end{proof}

\Cref{fig:interior-exterior} contrasts the two outcomes of \Cref{alg:geom}:
trap entry (\Interior) versus complete enclosure escape (\Exterior).

\begin{figure}[!htbp]
	\centering
	\begin{tikzpicture}[scale=0.4, line cap=round, line join=round, >=Stealth, font=\small]
		\begin{scope}
			\pgfmathsetmacro{\cxval}{0.585}
			\pgfmathsetmacro{\cyval}{1.705}
			\pgfmathsetmacro{\nval}{3}
			\pgfmathsetmacro{\xmin}{-7.5} \pgfmathsetmacro{\xmax}{7.5}
			\pgfmathsetmacro{\ymin}{-4.5} \pgfmathsetmacro{\ymax}{4.5}

			\draw[axis] (\xmin,0) -- (\xmax,0);
			\draw[axis] (0,\ymin) -- (0,\ymax);

			\def\VmaxExt{3.8125}
			\def\SmaxExt{5.8985}
			\DrawEnv[enclosure]{\cxval}{\cyval}{\VmaxExt}{\SmaxExt}

			\DrawTrapP[trap0]{\cxval}{\cyval}{\nval}

			\coordinate (P0) at (1.17,3.41);
			\node[right] at (1.17,3.25) {$2c$};

			\coordinate (P1) at (-6.2996,0.5797);
			\coordinate (P2) at (-2.3337,-3.5817);
			\coordinate (P3) at (5.9116,-2.6642);
			\coordinate (P4) at (5.6607,1.7007);
			\coordinate (Pesc) at (7.9113,2.3769);

			\draw[orbitEsc,->] (P0) -- (P1) -- (P2) -- (P3) -- (P4);
			\draw[orbitEsc,->] (P4) -- (Pesc);

			\foreach \pt in {P0,P1,P2,P3,P4} {
					\fill[orbitPoint] (\pt) circle (1.2pt);
				}

			\node[anchor=west] at (-7.3,4.35) {\Exterior};
			\fill (0,0) circle (3pt);
		\end{scope}

		\begin{scope}[shift={(17,0)}]
			\pgfmathsetmacro{\cxval}{0.585}
			\pgfmathsetmacro{\cyval}{1.675}
			\pgfmathsetmacro{\nval}{3}
			\pgfmathsetmacro{\xmin}{-7.5} \pgfmathsetmacro{\xmax}{7.5}
			\pgfmathsetmacro{\ymin}{-4.5} \pgfmathsetmacro{\ymax}{4.5}

			\draw[axis] (\xmin,0) -- (\xmax,0);
			\draw[axis] (0,\ymin) -- (0,\ymax);

			\def\VmaxInt{3.9423}
			\def\SmaxInt{5.9983}
			\DrawEnv[enclosure]{\cxval}{\cyval}{\VmaxInt}{\SmaxInt}

			\DrawTrapP[trap0]{\cxval}{\cyval}{\nval}

			\coordinate (Q0) at (1.17,3.35);
			\node[right] at (1.17,3.12) {$2c$};

			\coordinate (Q1) at (-6.0968,0.5695);
			\coordinate (Q2) at (-2.1805,-3.1790);
			\coordinate (Q3) at (5.2192,-2.1621);
			\coordinate (Q4) at (4.3348,0.7773);

			\draw[orbitInt,->] (Q0) -- (Q1) -- (Q2) -- (Q3) -- (Q4);

			\foreach \pt in {Q0,Q1,Q2,Q3} {
					\fill[orbitPoint] (\pt) circle (1.2pt);
				}
			\fill[orbitPointFinal] (Q4) circle (1.6pt);

			\node[anchor=west,fill=white,inner sep=1pt] at (-1.50,1.15) {\Interior};
			\fill (0,0) circle (3pt);
		\end{scope}
	\end{tikzpicture}
	\caption{Two outcomes of \Cref{alg:geom} for $N=5$.
		Left ($c=0.585+1.705i$): every backward branch exits the enclosure $\PEnc(c,5)$ at finite depth, certifying \Exterior.
		Right ($c=0.585+1.675i$): a branch enters the trap $\PTrap(c,5)$ at depth $4$, yielding \Interior.}
	\label{fig:interior-exterior}
\end{figure}
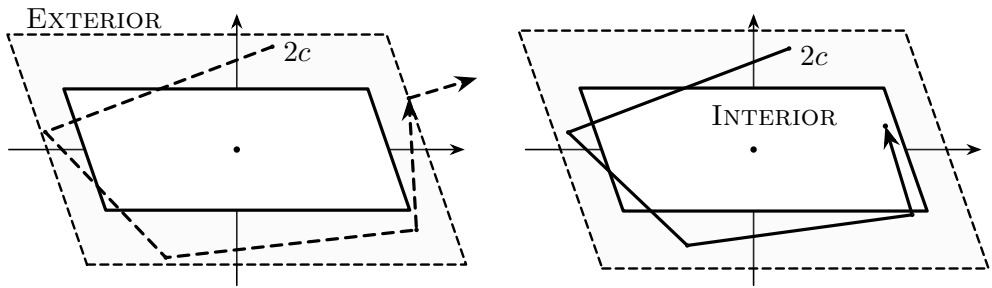

The parameter-plane counterparts of this dichotomy are the finite-capture sets $\Theta_k(n)$ and the survival sets $\Xi_k(n)$. The next bounds show that a finite truncation of inverse iteration already suffices for effective computation.

\subsection{Truncation and branching bounds.}
The enclosure geometry also controls the finite combinatorics of the exact backward tree. The next proposition isolates the vertical digit window at each node, and the following theorem gives a uniform outer-radial branching cap.

\begin{proposition}[Vertical branch truncation]\label{prop:vertical-pruning}
	Fix $n\ge2$ and set $N=2n-1$. Let $c=x+iy$ with $\rho=|c|>1$ and $y\neq0$, and let $\mathcal{V}_{\mathcal{E}}(c,N)$ denote the vertical half-width of the canonical exterior enclosure. For a node $(s,v)$ in canonical coordinates, define the set of vertically admissible digits:
	\[
		\mathcal{A}_v(s):=\{\,t\in A_N:\ |\rho s-yt|\le \mathcal{V}_{\mathcal{E}}(c,N)\,\}.
	\]
	Any preimage node $(s',v')$ that satisfies the vertical bounding test $|v'|\le \mathcal{V}_{\mathcal{E}}(c,N)$ must be generated by a digit $t\in \mathcal{A}_v(s)$. Moreover, the cardinality of this admissible set is strictly bounded:
	\[
		\#\mathcal{A}_v(s)\le \left\lfloor \frac{\mathcal{V}_{\mathcal{E}}(c,N)}{|y|}\right\rfloor+1.
	\]
\end{proposition}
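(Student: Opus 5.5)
The proof splits into two parts: a direct use of the update formula from \Cref{lem:dynamics}, and a counting argument for points of an arithmetic progression of step $2$ inside a bounded interval.

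\emph{Step 1 (necessity).} By \Cref{lem:dynamics}, the preimage $z'=g_t(z)$ of a node with canonical coordinates $(s,v)$ has vertical coordinate $v'=\rho s-yt$. This depends on $t$ and $s$ only, not on $v$. So if the preimage passes the vertical test $|v'|\le\VE(c,N)$, then $|\rho s-yt|\le\VE(c,N)$, which says exactly that $t\in\Av(s)$.

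\emph{Step 2 (cardinality).} Since $y\neq0$, rewrite the condition as
\[
	\Bigl|\,t-\frac{\rho s}{y}\Bigr|\le\frac{\VE(c,N)}{|y|}.
\]
Set $\alpha:=\rho s/y$ and $r:=\VE(c,N)/|y|$. Then $\Av(s)$ consists of the digits of $A_N$ lying in the closed interval $[\alpha-r,\alpha+r]$, which has length $2r$. The elements of $A_N$ form an arithmetic progression with common difference $2$. If $m$ of them lie in the interval, then the smallest and largest of these differ by $2(m-1)$, and this difference is at most $2r$. Hence $m-1\le r$. Since $m-1$ is an integer, $m-1\le\lfloor r\rfloor$, which gives
\[
	\#\Av(s)\le\lfloor r\rfloor+1.
\]
The case $\Av(s)=\varnothing$ satisfies the bound trivially.

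\emph{Where care is needed.} The computation itself is routine. The only points to watch are the following.
\begin{itemize}
	\item The division by $|y|$ uses $y\neq0$.
	\item Integrality of $m-1$ is what converts the real bound $r$ into the floor $\lfloor r\rfloor$. This is the only non-obvious step.
	\item The statement says ``strictly bounded'', but the natural bound is non-strict: equality $\#\Av(s)=\lfloor r\rfloor+1$ occurs when $r$ is an integer and the interval endpoints are digits. So I will prove the inequality $\le$, as displayed in the statement, and not claim strictness.
\end{itemize}
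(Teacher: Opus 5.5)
Your proposal is correct and follows the paper's proof essentially line for line: you read off $v'=\rho s-yt$ from \Cref{lem:dynamics}, rewrite the admissibility condition as $|t-\rho s/y|\le \VE(c,N)/|y|$, and count points of the step-$2$ progression $A_N$ in an interval of length $2\VE(c,N)/|y|$. You are also right that the argument yields the displayed non-strict inequality, which is all the paper proves despite the wording ``strictly bounded''.
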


\begin{proof}
	By \Cref{lem:dynamics}, the child's vertical coordinate is $v'=\rho s-yt$. This proves the first claim.

	For the counting bound, the condition $t\in\Av(s)$ is equivalent to
	\[
		\left|t-\frac{\rho}{y}s\right|
		\le \frac{\VE(c,N)}{|y|}.
	\]
	Hence $\Av(s)$ is the intersection of $A_N$ with an interval of length $2\VE(c,N)/|y|$. Since $A_N$ is an arithmetic progression with step $2$, any interval of length $L$ contains at most $\lfloor L/2\rfloor+1$ points of $A_N$. Applying this with $L=2\VE(c,N)/|y|$ gives the result.
\end{proof}

For $t\in A_N$ the inverse branch is $g_t(z)=c(z-t)$, and for a word $u=t_1\cdots t_k\in A_N^k$ we write $g_u=g_{t_k}\circ\cdots\circ g_{t_1}$. The depth-$k$ backward nodes are the points $g_u(2c)$.

\begin{theorem}[Outer radial branching bound]\label{thm:uniform-branching-cap}
	Fix $n\ge2$ and set $N=2n-1$. Let $c=x+iy$ with $\rho:=|c|>1$ and $y\neq0$.
	Assume $\rho\ge\sqrt n$. Then, for every node in the backward-search tree,
	\[
		\#\Av(s)\le b_n
		:=\left\lceil \frac{2(\sqrt n+1)}{\sqrt n-1}\right\rceil
		=\left\lceil\frac{4}{\sqrt n-1}\right\rceil+2.
	\]
	In particular, the exact backward tree has at most $b_n^k$ nodes at depth $k$, and $b_n=3$ for all $n\ge25$.
\end{theorem}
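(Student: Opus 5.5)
The plan is to reduce the statement to an explicit upper bound on the ratio $\VE(c,N)/|y|$, and then feed that bound into the counting estimate of \Cref{prop:vertical-pruning}. That proposition gives $\#\Av(s)\le\lfloor \VE(c,N)/|y|\rfloor+1$ at every node. So it suffices to prove
\[
	\frac{\VE(c,N)}{|y|}<B_n:=\frac{2(\sqrt n+1)}{\sqrt n-1}
\]
with \emph{strict} inequality whenever $\rho\ge\sqrt n$. Indeed, if $L<B$ then $\lfloor L\rfloor+1\le\lceil B\rceil$, whether or not $B$ is an integer. This is why strictness matters.

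\textbf{Step 1: per-term bound.} Write $c=\rho e^{i\theta}$, so that $|y|=\rho|\sin\theta|$. By \eqref{eq:VE-series-Y}, $\VE(c,N)=(N-1)\sum_{k\ge1}\rho^{-k}|\sin k\theta|$. I will use the elementary inequality $|\sin k\theta|\le k|\sin\theta|$, proved by induction from $\sin(k+1)\theta=\sin k\theta\cos\theta+\cos k\theta\sin\theta$. For $k\ge2$ this inequality is strict: equality at $k=2$ would force $|\cos\theta|=1$, which is impossible since $y\ne0$, and the induction propagates strictness.

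\textbf{Step 2: summation.} Dividing by $|y|$ and using $\sum_{k\ge1}k\rho^{-k}=\rho/(\rho-1)^2$ gives
\[
	\frac{\VE(c,N)}{|y|}<(N-1)\sum_{k\ge1}k\rho^{-k-1}=\frac{N-1}{(\rho-1)^2}.
\]
The right-hand side is decreasing in $\rho>1$. Hence for $\rho\ge\sqrt n$ it is at most
\[
	\frac{2n-2}{(\sqrt n-1)^2}=\frac{2(\sqrt n-1)(\sqrt n+1)}{(\sqrt n-1)^2}=B_n .
\]
Combining this with \Cref{prop:vertical-pruning} yields $\#\Av(s)\le\lceil B_n\rceil=b_n$. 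The identity $B_n=2+4/(\sqrt n-1)$ then gives the second expression for $b_n$.

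\textbf{Step 3: consequences.} Every child node that survives the vertical test arises from a digit in $\Av(s)$. Since the bound $b_n$ is independent of $s$, induction on depth gives at most $b_n^k$ nodes at depth $k$. Finally, $4/(\sqrt n-1)\le1$ exactly when $n\ge25$, and $4/(\sqrt n-1)>0$ always, so $b_n=3$ for all $n\ge25$.

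The main obstacle is the strictness bookkeeping in Step 2 rather than the estimate itself. Without strictness, an integer value of $B_n$ (for example $B_9=4$ or $B_{25}=3$) would produce $\lfloor B_n\rfloor+1$, which exceeds the claimed $\lceil B_n\rceil$ by one. The strict inequality $|\sin 2\theta|<2|\sin\theta|$ for non-real $c$ is what closes this gap.
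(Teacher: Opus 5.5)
Your proposal is correct and follows the paper's own proof: \Cref{prop:vertical-pruning}, the bound $|\sin k\theta|\le k|\sin\theta|$ made strict through the $k=2$ term, summation to $(N-1)/(\rho-1)^2$, and evaluation at $\rho=\sqrt n$. You also spell out two steps the paper leaves implicit: that $L<B$ implies $\lfloor L\rfloor+1\le\lceil B\rceil$, and the induction behind the $b_n^k$ node count.
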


\begin{proof}
	By \Cref{prop:vertical-pruning},
	\[
		\#\Av(s)\le \left\lfloor \frac{\VE(c,N)}{|y|}\right\rfloor+1.
	\]
	Write $c=\rho e^{i\theta}$. Since $y\neq0$, we have $\theta\notin\pi\Z$. Using
	\eqref{eq:VE-series-Y} and $|\sin(k\theta)|\le k|\sin\theta|$, with strict inequality for $k=2$,
	we obtain
	\[
		\VE(c,N)
		< (N-1)|\sin\theta|\sum_{k=1}^{\infty} k\rho^{-k}.
	\]
	Because $|y|=\rho|\sin\theta|$,
	\[
		\frac{\VE(c,N)}{|y|}
		< \frac{N-1}{\rho}\sum_{k=1}^{\infty} k\rho^{-k}
		= \frac{N-1}{(\rho-1)^2}.
	\]
	Since $N-1=2(n-1)$ and $\rho\ge\sqrt n$,
	\[
		\frac{\VE(c,N)}{|y|}
		< \frac{2(n-1)}{(\sqrt n-1)^2}
		= \frac{2(\sqrt n+1)}{\sqrt n-1}.
	\]
	Hence
	\[
		\#\Av(s)\le \left\lfloor \frac{\VE(c,N)}{|y|}\right\rfloor+1
		\le
		\left\lceil \frac{2(\sqrt n+1)}{\sqrt n-1}\right\rceil.
	\]

	Finally,
	\[
		\frac{2(\sqrt n+1)}{\sqrt n-1}\le 3
		\iff
		\sqrt n\ge 5
		\iff
		n\ge25,
	\]
	so $b_n=3$ for every $n\ge25$.
\end{proof}

By \Cref{prop:inner-annulus}, only the complementary radial regime $|c|\ge\sqrt n$ requires certification. For the uniform branching estimates below, it is therefore enough to work in this outer region, where the enclosure series yields the explicit cap above.

These combinatorial bounds also have a visible geometric counterpart in the first quadrant, illustrated in~\Cref{fig:Omega1_Q1_fill_n17}. As that figure shows for $n=17$, the first-capture set $\Theta_1(n)$ already reaches far beyond the inner annulus $1<|c|<\sqrt n$; the remaining visible gaps are concentrated near the points of the circle $|c|=\sqrt n$ with $2\Real(c)\in\Z$, equivalently near the non-real roots of $z^2-mz+n$ $(m\in\Z)$, that is, near the quadratic algebraic integers of norm $n$. The ambient lens, the base capture set, and the radial guides organize where admissible branches can occur.

The underlying rigidity mechanism is straightforward: in canonical coordinates, the inverse dynamics reduces to a shear (\Cref{lem:dynamics}), with the last digit entering \emph{only} through a one-dimensional translation in the vertical update. This forces the admissible last digits to lie in an explicit real \emph{digit window}. Since $A_N$ is an arithmetic progression, admissible digits occur in consecutive blocks, collapsing the naive $N^k$ combinatorics into a small family of contiguous digit blocks, one block for each prefix.

\begin{figure}[!htbp]
	\centering
	\IfFileExists{data_Level_1_n17.tex}{\input{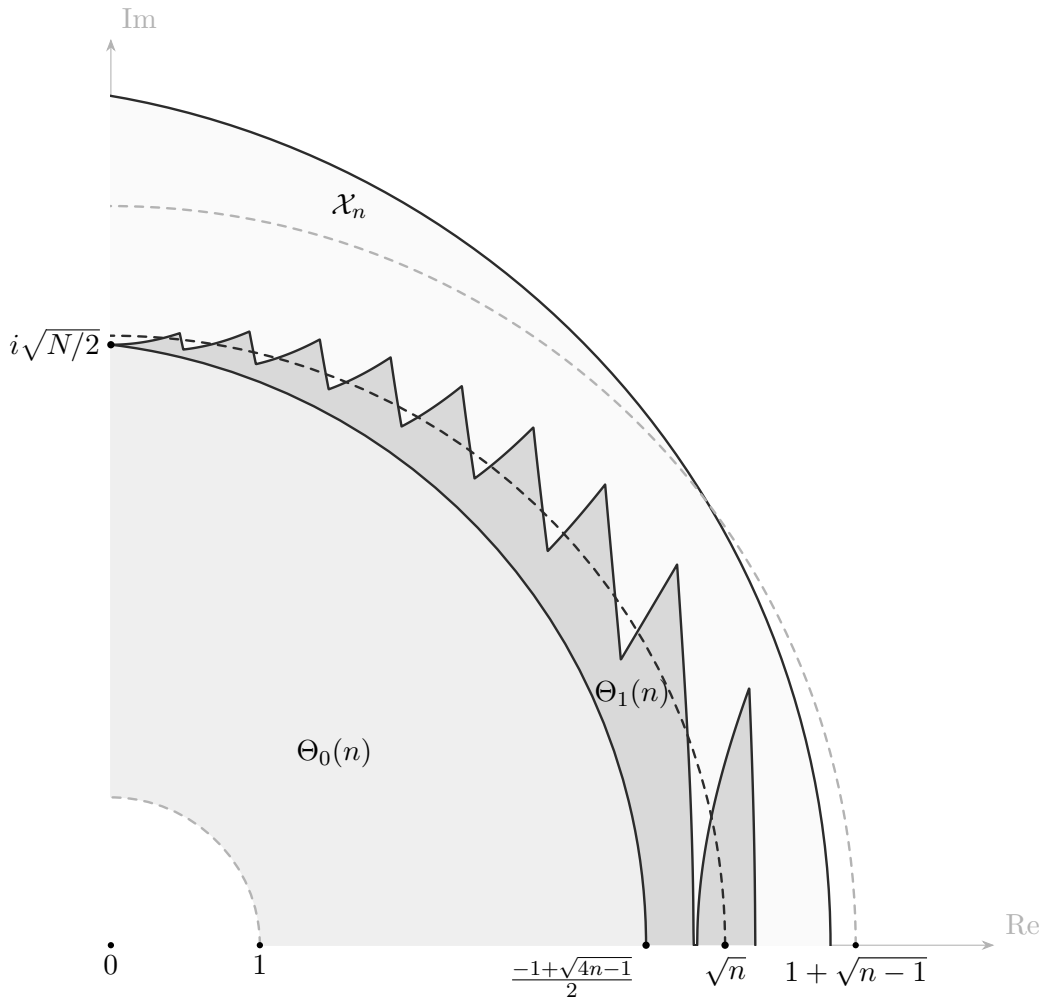}}{}

	\begin{tikzpicture}[scale=1.96, line cap=round, line join=round, >=Stealth, font=\small]
		\pgfmathsetmacro{\n}{17}
		\pgfmathsetmacro{\N}{2*\n - 1}
		\pgfmathsetmacro{\rLens}{sqrt(\N + 1)}
		\pgfmathsetmacro{\rN}{sqrt(\n)}
		\pgfmathsetmacro{\rNmOne}{sqrt(\n-1)}
		\pgfmathsetmacro{\xTriv}{1+\rNmOne}
		\pgfmathsetmacro{\rOmegaZero}{sqrt(\n-0.25)}
		\pgfmathsetmacro{\Qmax}{max(\rLens,\xTriv)+0.6}

		\draw[->, gray!60] (0,0) -- (\Qmax-.5,0) node[above right] {$\Real$};
		\draw[->, gray!60] (0,0) -- (0,\Qmax-.3) node[above right] {$\Imag$};

		\begin{scope}
			\clip (0,0) rectangle (\Qmax,\Qmax);

			\begin{scope}
				\clip (-1,0) circle (\rLens);
				\fill[gray!4] (1,0) circle (\rLens);
			\end{scope}
			\draw[Omega0Line, line width=0.9pt] (-1,0) circle (\rLens);

			\IfFileExists{data_Level_1_n17.tex}{
				\ifdefined\OmegaOneClosedNSeventeenQOne
					\fill[Omega1Fill, opacity=0.90] \OmegaOneClosedNSeventeenQOne ;
				\fi
				\ifdefined\OmegaOnePathNSeventeenQOne
					\draw[Omega1Line, line width=0.9pt] \OmegaOnePathNSeventeenQOne ;
				\fi
			}{
				\node[text=black, rotate=30] at (3.5, 2.5) {Boundary data missing};
			}

			\fill[Omega0Fill, opacity=0.96] (-0.5, 0) circle (\rOmegaZero);
			\draw[Omega0Line, line width=0.9pt] (-0.5, 0) circle (\rOmegaZero);

			\draw[dashed, Omega0Line, line width=0.9pt] (0,0) circle (\rN);
			\fill[white] (0,0) circle (1);
			\draw[dashed, gray!60, line width=0.9pt] (0,0) circle (1);
			\draw[dashed, gray!60, line width=0.9pt] (0,0) circle (\xTriv);

		\end{scope}

		\fill (0,0) circle (0.6pt) node[below] {\small $0$};
		\fill (1,0) circle (0.6pt) node[below] {\small $1$};
		\fill (\xTriv,0) circle (0.6pt) node[below] {\small $1+\sqrt{n-1}$};
		\fill (\rN,0) circle (0.7pt) node[below] {\small $\sqrt{n}$};
		\pgfmathsetmacro{\xBaseRight}{(-1 + sqrt(1 + 2*\N))/2}
		\fill (\xBaseRight,0) circle (0.7pt) node[below left] {\small $\frac{-1 + \sqrt{4n-1}}{2}$};
		\fill (0,{sqrt(0.5*\N)}) circle (0.7pt) node[left] {\small $i\sqrt{N/2}$};

		\node[color=black] at (1.6,5.0) {$\mathcal X_n$};
		\IfFileExists{data_Level_1_n17.tex}{
			\ifdefined\OmegaOnePathNSeventeenQOne
				\node[color=black] at (3.5,1.7) {$\Theta_1(n)$};
			\fi
		}{}
		\node[color=black] at (1.5,1.3) {$\Theta_0(n)$};
	\end{tikzpicture}
	\caption{First-quadrant view of the first-capture set $\Theta_1(n)$ (shown for $n=17$).
		The figure displays the ambient lens $\Xn$, the base capture set $\Theta_0(n)$, and the
		radial guides $|c|=\sqrt n$ and $|c|=1+\sqrt{n-1}$.}
	\label{fig:Omega1_Q1_fill_n17}
\end{figure}

\section{The finite-capture filtration and a two-step closure theorem}\label{sec:stratification}

We now pass from backward trap entry to forward trap iterates. In that language the capture time becomes a first-entry time, and the delay-two closure theorem follows from a uniform buffer between the trap and its second forward iterate.

\subsection{Forward images of the trap}
For $c\in\Xn\setminus\R$ define the forward trap iterates
\begin{equation}
	\PTrap_0(c):=\PTrap(c,N),
	\qquad
	\PTrap_{k+1}(c):=\Hutch_c\bigl(\PTrap_k(c)\bigr)\quad(k\ge0).
\end{equation}

\begin{lemma}[Forward invariance of the interior]\label{lem:forward-interior}
	Let $|c|>1$ and let $E:=E(c,N)$. Then
	\[
		f_t(\interior E)\subset \interior E
		\qquad (t\in A_N).
	\]
	Consequently, if $U\subset \interior E$, then
	\[
		\Hutch_c(U)\subset \interior E,
	\]
	and hence
	\[
		\Hutch_c^k(U)\subset \interior E
		\qquad (k\ge0).
	\]
\end{lemma}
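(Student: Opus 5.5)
The plan is to combine the fixed-point property $\Hutch_c(E)=E$ with the fact that each $f_t$ is an affine homeomorphism of $\C$.

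\emph{First inclusion.} Fix $t\in A_N$. Since $f_t(z)=t+z/c$ has inverse $g_t$, it is an open map, so $f_t(\interior E)$ is an open subset of $\C$. It is also contained in $f_t(E)$. By the fixed-point identity, $f_t(E)\subset\Hutch_c(E)=E$. An open set contained in $E$ lies in $\interior E$, which gives $f_t(\interior E)\subset\interior E$. No property of $c$ beyond $c\neq0$ is needed, since only invertibility of $f_t$ matters.

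\emph{Consequences.} Let $U\subset\interior E$. Then
\[
\Hutch_c(U)=\bigcup_{t\in A_N}f_t(U)\subset\bigcup_{t\in A_N}f_t(\interior E)\subset\interior E .
\]
The statement $\Hutch_c^k(U)\subset\interior E$ then follows by induction on $k$. The case $k=0$ is the hypothesis. If $\Hutch_c^k(U)\subset\interior E$, applying the previous display to the set $\Hutch_c^k(U)$ gives $\Hutch_c^{k+1}(U)\subset\interior E$.

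Note that $\Hutch_c$ was defined on compact sets, while here it is applied to arbitrary subsets (for example the open trap) through the same union formula. I will say explicitly that this is the meaning used. With that convention there is no real obstacle in the argument.

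Applied to $U=\PTrap(c,N)$, which lies in $\interior E(c,N)$ by \Cref{cor:self-covering}, the lemma gives $\PTrap_k(c)\subset\interior E(c,N)$ for every $k\ge0$.
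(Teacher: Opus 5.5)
Your proof is correct and follows essentially the same route as the paper: the affine homeomorphism $f_t$ maps open sets to open sets, $f_t(E)\subset E$ by the fixed-point identity, and you then take the union over $t\in A_N$ and induct on $k$ (the paper runs the first step pointwise, using a ball $B(z,r)\subset E$). Your remark that $\Hutch_c$ is being applied to non-compact sets through the union formula is a reasonable clarification; the paper uses this convention without stating it.
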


\begin{proof}
	Fix $t\in A_N$ and $z\in\interior E$. Choose $r>0$ such that
	$B(z,r)\subset E$.
	Since $f_t$ is an affine homeomorphism, $f_t(B(z,r))$ is an open neighborhood of
	$f_t(z)$.
	Moreover,
	\[
		f_t(B(z,r))\subset f_t(E)\subset \bigcup_{s\in A_N} f_s(E)=E.
	\]
	Therefore $f_t(z)\in\interior E$.
	The statement for $\Hutch_c(U)$ follows by taking the union over $t\in A_N$, and the
	iterated statement follows by induction on $k$.
\end{proof}

Since $\PTrap_0(c)$ is self-covering, $\PTrap_0(c)\subset\PTrap_1(c)$, and hence $\PTrap_k(c)\subset\PTrap_{k+1}(c)$ for all $k\ge0$. Each $\PTrap_k(c)$ is open, and \Cref{cor:self-covering,lem:forward-interior} give $\PTrap_k(c)\subset \interior E(c,N)$ for every $k\ge0$.
\Cref{fig:nested} shows the first three trap iterates.

\begin{figure}[!htbp]
	\centering
	\begin{tikzpicture}[scale=0.7, line cap=round, line join=round, >=Stealth, font=\small]
		\tikzset{
			trap0/.style={fill=Omega0Fill, draw=Omega0Line, line width=1.0pt},
			trap1/.style={fill=Omega1Fill, draw=Omega1Line, line width=1.0pt},
			trap2/.style={fill=black!26, draw=Omega2Line, line width=1.0pt}
		}

		\pgfmathsetmacro{\cxval}{0.7}
		\pgfmathsetmacro{\cyval}{1.4}
		\pgfmathsetmacro{\nval}{3}

		\pgfmathsetmacro{\xmin}{-8} \pgfmathsetmacro{\xmax}{8}
		\pgfmathsetmacro{\ymin}{-5} \pgfmathsetmacro{\ymax}{5}
		\draw[step=1, thin, color=black!10] (\xmin,\ymin) grid (\xmax,\ymax);
		\draw[axis] (\xmin,0) -- (\xmax,0) node[below] {$\Real$};
		\draw[axis] (0,\ymin) -- (0,\ymax) node[above right] {$\Imag$};

		\DrawPlevel[trap2]{2}{\cxval}{\cyval}{\nval}
		\DrawPlevel[trap1]{1}{\cxval}{\cyval}{\nval}
		\DrawPlevel[trap0]{0}{\cxval}{\cyval}{\nval}

		\node[font=\small] at (0.8, 0.8) {$\PTrap_0(c)$};
		\node[font=\small] at (3.4, 2.7) {$\PTrap_1(c)$};
		\node[font=\small] at (4.8, 4.5) {$\PTrap_2(c)$};
	\end{tikzpicture}
	\caption{The canonical trap $\PTrap_0(c)$ and its first two iterates $\PTrap_1, \PTrap_2$
		for $N=5$, $c=0.7+1.4i$.}
	\label{fig:nested}
\end{figure}
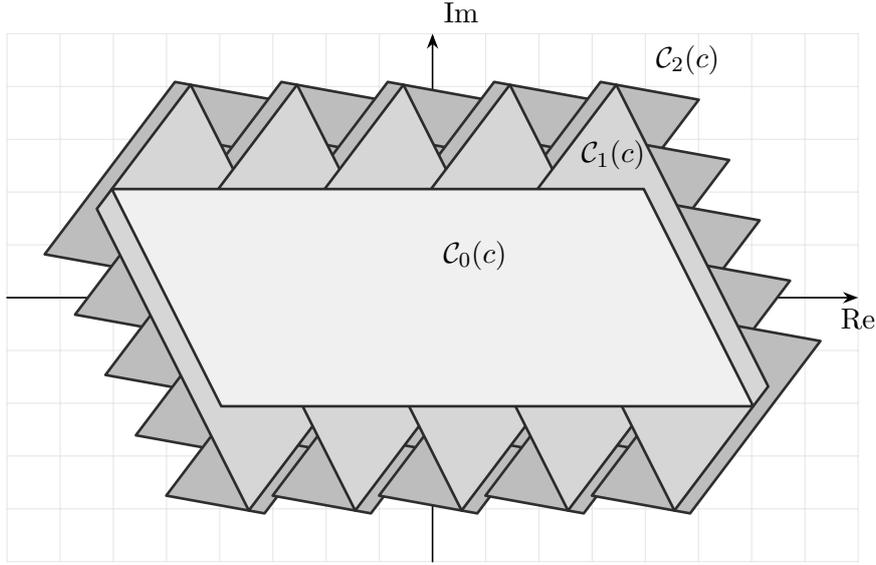

\subsection{Forward--backward duality and exact strata}
Membership in $\PTrap_k(c)$ reduces to inverse iteration: $2c\in\PTrap_k(c)$ holds if
and only if some inverse branch of depth $\le k$ sends $2c$ into the base trap $\PTrap_0(c)$; see~\Cref{fig:duality-forward-backward}.

\begin{proposition}[Forward--backward duality]\label{prop:duality}
	Let $u\in A_N^k$. Then
	\[
		2c\in f_u(\PTrap_0(c))
		\quad\Longleftrightarrow\quad
		g_u(2c)\in\PTrap_0(c).
	\]
\end{proposition}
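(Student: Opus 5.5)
The plan is to use that $f_u$ and $g_u$ are mutually inverse bijections of $\C$. By \Cref{def:gu}, for a word $u=t_1\cdots t_k$ we have $f_u=f_{t_1}\circ\cdots\circ f_{t_k}$ and $g_u=g_{t_k}\circ\cdots\circ g_{t_1}$. Each $g_t=f_t^{-1}$ is an affine bijection of $\C$, since $|c|>1$ gives $c\neq0$. So the first step is to record $g_u=f_u^{-1}$ by induction on $k=|u|$. The case $k=0$ is the identity convention. For the inductive step, write $u=u't_k$ and use $f_{uv}=f_u\circ f_v$ and $g_{uv}=g_v\circ g_u$. This gives $g_u\circ f_u=g_{t_k}\circ g_{u'}\circ f_{u'}\circ f_{t_k}=\mathrm{id}$, and similarly $f_u\circ g_u=\mathrm{id}$.

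The equivalence then follows in two directions. If $2c\in f_u(\PTrap_0(c))$, write $2c=f_u(w)$ with $w\in\PTrap_0(c)$. Applying $g_u$ gives $g_u(2c)=w\in\PTrap_0(c)$. Conversely, if $g_u(2c)\in\PTrap_0(c)$, then $2c=f_u(g_u(2c))\in f_u(\PTrap_0(c))$.

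As a by-product, I would also record the consequence used afterwards. An induction on $k$, using $\Hutch_c(K)=\bigcup_t f_t(K)$, gives $\PTrap_k(c)=\bigcup_{u\in A_N^k}f_u(\PTrap_0(c))$. Combined with the nesting $\PTrap_j(c)\subset\PTrap_k(c)$ for $j\le k$, this shows that $2c\in\PTrap_k(c)$ if and only if $g_u(2c)\in\PTrap_0(c)$ for some $u\in A_N^{\le k}$.

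There is no real obstacle here. The only point needing care is the order of composition in $g_u$ versus $f_u$, which is fixed by the conventions of \Cref{def:gu}.
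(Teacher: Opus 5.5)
Your proposal is correct and follows the paper's route: the paper's proof is exactly the observation that $f_u$ and $g_u$ are mutually inverse (by \Cref{def:gu}), so $z\in f_u(\PTrap_0(c))$ if and only if $g_u(z)\in\PTrap_0(c)$. Your explicit induction verifying $g_u=f_u^{-1}$ and your by-product $\PTrap_k(c)=\bigcup_{u\in A_N^k}f_u(\PTrap_0(c))$ add detail that the paper leaves implicit. The by-product uses the nesting of the $\PTrap_j(c)$, which needs $c\in\Xn\setminus\R$, and that is the standing context.
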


\begin{proof}
	Since $f_u$ and $g_u$ are inverse maps (\Cref{def:gu}), $z\in f_u(\PTrap_0(c))$ if and only if
	$g_u(z)\in\PTrap_0(c)$.
\end{proof}

\begin{figure}[!htbp]
	\centering
	\begin{tikzpicture}[scale=0.50, line cap=round, line join=round, >=Stealth, font=\small]
		\pgfmathsetmacro{\cx}{0.585}
		\pgfmathsetmacro{\cy}{1.675}
		\pgfmathsetmacro{\nval}{3}

		\pgfmathsetmacro{\xmin}{-6.6}
		\pgfmathsetmacro{\xmax}{ 6.6}
		\pgfmathsetmacro{\ymin}{-3.7}
		\pgfmathsetmacro{\ymax}{ 3.85}
		\pgfmathsetmacro{\panelSep}{14.2}

		\pgfmathsetmacro{\wRe}{0.0226884271}
		\pgfmathsetmacro{\wIm}{0.0983353094}

		\tikzset{
			wordbox/.style={draw=Omega0Line, fill=Omega0Fill!55, rounded corners=1.5pt,
					inner sep=2.0pt, minimum height=4.2mm, minimum width=8.2mm,
					font=\scriptsize},
			note/.style={font=\scriptsize, align=center, text=black!90},
			sumarr/.style={->, line width=0.95pt, draw=black!75},
			maparr/.style={->, line width=0.75pt, draw=black!70}
		}

		\begin{scope}
			\node[anchor=north west, font=\bfseries] at (\xmin,5.05) {(a)};

			\DrawTrapP[trap0]{\cx}{\cy}{\nval}
			\node[font=\scriptsize, anchor=south west] at (2.9,0.9) {$\PTrap_0$};

			\coordinate (s0)   at (0,0);
			\coordinate (s1)   at (2,0);
			\coordinate (s2)   at (1.25663548,2.12843687);
			\coordinate (s3)   at (1.75384207,2.52398798);
			\coordinate (su) at (1.14808642,2.90610507);

			\draw[sumarr] (s0) -- (s1) -- (s2) -- (s3) -- (su);

			\foreach \P in {s0,s1,s2,s3}{\filldraw[orbitPoint] (\P) circle (1.1pt);}
			\filldraw[orbitPoint] (su) circle (1.15pt);

			\begin{scope}[shift={(su)}, cm={\wRe,\wIm,-\wIm,\wRe,(0,0)}]
				\DrawTrapP[cover1]{\cx}{\cy}{\nval}
			\end{scope}

			\coordinate (twoc) at ({2*\cx},{2*\cy});
			\filldraw[orbitPointFinal] (twoc) circle (1.55pt);
			\node[font=\scriptsize, anchor=west] at ($(twoc)+(0.18,0.02)$) {$2c$};

			\node[note, anchor=west] at (-4.10,4.5)
			{$f_{\mathbf u}(\PTrap_0)
					= \displaystyle\sum_{k=1}^{|\mathbf u|}\frac{t_k}{c^{k-1}}+\dfrac{\PTrap_0}{c^{|\mathbf u|}}$};

			\node[note] at (3.,-3.25) {$2c\in f_{\mathbf u}(\PTrap_0)$};
		\end{scope}

		\node[font=\Large] at (-1.+0.5*\panelSep,-3.35) {$\Longleftrightarrow$};

		\begin{scope}[shift={(\panelSep,0)}]
			\node[anchor=north west, font=\bfseries] at (\xmin,5.05) {(b)};

			\node[note] at (-1.5,4.2)
			{$g_{\mathbf u}=g_{t_{|\mathbf u|}}\circ\cdots\circ g_{t_1}$};

			\DrawTrapP[trap0]{\cx}{\cy}{\nval}
			\node[font=\scriptsize, anchor=south west] at (2.9,0.9) {$\PTrap_0$};

			\coordinate (q0) at ({2*\cx},{2*\cy});
			\coordinate (q1) at (-6.0968,0.5695);
			\coordinate (q2) at (-2.1805405,-3.1789825);
			\coordinate (q3) at (5.219179495,-2.1621101);
			\coordinate (q4) at (4.334754422,0.777291246);

			\draw[orbitInt,->] (q0) -- (q1) -- (q2) -- (q3) -- (q4);

			\foreach \P in {q0,q1,q2,q3}{\filldraw[orbitPoint] (\P) circle (1.15pt);}
			\filldraw[orbitPointFinal] (q4) circle (1.55pt);

			\node[font=\scriptsize, anchor=west] at ($(q0)+(0.18,0.05)$) {$2c$};
			\node[font=\scriptsize, anchor=south west] at ($(q4)+(0.12,0.12)$) {$g_{\mathbf u}(2c)$};

			\node[note] at (-5.,-3.25) {$g_{\mathbf u}(2c)\in \PTrap_0$};
		\end{scope}
	\end{tikzpicture}
	\caption{Forward--backward duality for a finite-time trap witness.
		For $u=t_1\cdots t_m$, $f_u(\PTrap_0(c))$ is a translated and scaled copy of $\PTrap_0(c)$.
		(\emph{a}) Forward view: $2c\in f_u(\PTrap_0(c))$.
		(\emph{b}) Backward view: equivalently, $g_u(2c)\in\PTrap_0(c)$.
		Shown for $N=5$, $c=0.585+1.675i$, and $u=(2,-4,-2,4)$.}
	\label{fig:duality-forward-backward}
\end{figure}
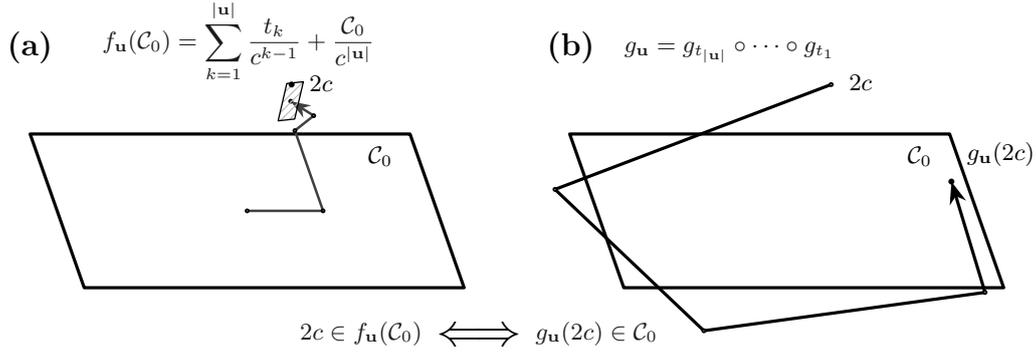

\begin{definition}[Capture-time strata]\label{def:strata}
	For $k\in\N\cup\{0\}$ define the \emph{capture-time stratum}
	\[
		\Omega_k(n):=\{\,c\in\Xn\setminus\R:\ k(c)=k\,\}.
	\]
	By \Cref{prop:duality}, the capture time $k(c)$ corresponds exactly to the smallest $k$ for which
	$2c\in\PTrap_k(c)$. Hence
	\[
		\Omega_0(n)=\Theta_0(n)=\{\,c\in\Xn\setminus\R:\ 2c\in\PTrap_0(c)\,\},
	\]
	in agreement with the base capture set introduced in \Cref{sec:certification}, and for $k\ge1$,
	\[
		\Omega_k(n)
		=\{\,c\in\Xn\setminus\R:\ 2c\in\PTrap_k(c)\setminus\PTrap_{k-1}(c)\,\}.
	\]
	In particular,
	\[
		\Theta_k(n)=\bigcup_{j=0}^k\Omega_j(n),
		\qquad
		\Theta_n=\bigcup_{k\ge0}\Omega_k(n).
	\]
\end{definition}

\subsection{The two-step buffer}
The next result gives the uniform two-step buffer that drives the closure argument.

\begin{proposition}\label{prop:two-step-buffer}
	If $c\in\Xn\setminus\R$, then
		$\closure{\PTrap_0(c)}\subset \PTrap_2(c)$.
	Consequently, for every $k\ge0$,
	\[
		\closure{\PTrap_k(c)}\subset \PTrap_{k+2}(c).
	\]
\end{proposition}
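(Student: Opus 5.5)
The plan is to work in canonical coordinates $(s,v)$ and run the nearest admissible digit rule of \Cref{def:NE} twice, starting from a point of the closed trap. Write $\mathcal S:=\mathcal S(c,N)$ and $\mathcal V:=\mathcal V(c,N)$. Since $c\in\Xn\setminus\R$, \Cref{cor:canonical-validity} gives $|y|<\mathcal V$, together with the saturated identities $\rho\mathcal S=N|y|$ and $\mathcal S=\rho\mathcal V+2|x||y|/\rho$. A point $z\in\closure{\PTrap_0(c)}$ satisfies $|s|\le\mathcal S$ and $|v|\le\mathcal V$. The one-step argument of \Cref{prop:C-NE} then loses strictness only in the slanted coordinate. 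The idea is that a second step recovers it, because after one step the vertical coordinate is bounded by $|y|$, which is strictly smaller than $\mathcal V$.

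\emph{Step 1.} Set $\alpha=\rho s/y$. Then $|\alpha|\le\rho\mathcal S/|y|=N$, so \Cref{lem:nearest-even-digit} applies even on the boundary $|\alpha|=N$ and gives $t\in A_N$ with $|\alpha-t|\le1$. By \Cref{lem:NE-properties,lem:dynamics}, $z':=g_t(z)$ has coordinates satisfying
\[
	|v'|\le|y|,
	\qquad
	|s'|\le \frac{2|x|}{\rho}|v'|+\rho|v|\le \frac{2|x||y|}{\rho}+\rho\mathcal V=\mathcal S.
\]
Thus $z'\in\closure{\PTrap_0(c)}$, and in addition $|v'|\le|y|$.

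\emph{Step 2.} Apply the same rule to $z'$. Again $|\alpha'|\le N$, so there is $t'\in A_N$ with $|v''|\le|y|<\mathcal V$. For the slanted coordinate we now use the sharper vertical bound on $z'$:
\[
	|s''|\le \frac{2|x||y|}{\rho}+\rho|v'|\le \frac{2|x||y|}{\rho}+\rho|y|<\frac{2|x||y|}{\rho}+\rho\mathcal V=\mathcal S.
\]
The strict inequality uses $|y|<\mathcal V$ and $\rho>0$. Hence $g_{tt'}(z)\in\PTrap_0(c)$. By \Cref{prop:duality}, $z\in f_{tt'}(\PTrap_0(c))\subset\Hutch_c^2(\PTrap_0(c))=\PTrap_2(c)$. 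This proves $\closure{\PTrap_0(c)}\subset\PTrap_2(c)$.

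\emph{Consequence.} $\Hutch_c$ is a finite union of affine homeomorphisms, so it commutes with closure on bounded sets: $\closure{\bigcup_t f_t(A)}=\bigcup_t f_t(\closure A)$. By induction, $\closure{\PTrap_k(c)}=\Hutch_c^k(\closure{\PTrap_0(c)})$. Since $\Hutch_c$ is monotone under inclusion, this gives $\closure{\PTrap_k(c)}\subset\Hutch_c^k(\PTrap_2(c))=\PTrap_{k+2}(c)$.

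There is no serious obstacle. The one delicate point is the loss of strictness in the slanted bound at the first step, and the two-step structure is designed precisely to get around it. One also needs \Cref{lem:nearest-even-digit} to cover the endpoint case $|\alpha|=N$, which it does because it is stated for $|r|\le N$.
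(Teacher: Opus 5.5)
Your proof is correct and follows the paper's argument. Both apply the nearest admissible digit rule twice, use the bound $|v_1|\le|y|<\mathcal V$ from the first step to restore strictness in the slanted coordinate at the second, and then push the closure through the maps $f_u$. The differences are minor: you always take two steps instead of splitting on whether $|s_1|<\mathcal S$, and you explicitly justify the endpoint case $|\alpha|=N$ and the identity $\overline{\PTrap_k(c)}=\Hutch_c^k\bigl(\overline{\PTrap_0(c)}\bigr)$, both of which the paper uses tacitly.
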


\begin{proof}
	Let $c\in\Xn\setminus\R$, and let $(\mathcal S,\mathcal V)$ be the canonical trap half-widths. By \Cref{cor:canonical-validity}, one has $|y|<\mathcal V$.

	Take $z_0\in\overline{\PTrap_0(c)}$, and write its canonical coordinates as $(s_0,v_0)$. Then $|s_0|\le\mathcal S$ and $|v_0|\le\mathcal V$. Apply the nearest admissible digit rule to $z_0$, obtaining $t_1\in A_N$. By \Cref{lem:NE-properties}, the point $z_1:=g_{t_1}(z_0)$ satisfies $|v_1|\le |y|<\mathcal V$. The shear bound from \Cref{prop:C-NE} then gives
	\[
		|s_1|\le \frac{2|x|}{\rho}|v_1|+\rho|v_0|
		\le \frac{2|x||y|}{\rho}+\rho\mathcal V
		=\mathcal S.
	\]
	If $|s_1|<\mathcal S$, then $z_1\in\PTrap_0(c)$, so $z_0\in f_{t_1}(\PTrap_0(c))\subset\PTrap_1(c)\subset\PTrap_2(c)$.

	If instead $|s_1|=\mathcal S$, apply the nearest admissible digit rule once more to $z_1$, obtaining $t_2\in A_N$ and $z_2:=g_{t_2}(z_1)$. Again $|v_2|\le |y|<\mathcal V$, and now
	\[
		|s_2|\le \frac{2|x|}{\rho}|v_2|+\rho|v_1|
		< \frac{2|x||y|}{\rho}+\rho\mathcal V
		=\mathcal S.
	\]
	Thus $z_2\in\PTrap_0(c)$, so $z_0\in f_{t_1t_2}(\PTrap_0(c))\subset\PTrap_2(c)$. This proves $\overline{\PTrap_0(c)}\subset\PTrap_2(c)$.

	For general $k\ge0$,
	\[
		\overline{\PTrap_k(c)} = \bigcup_{u \in A_N^k} f_u\bigl(\overline{\PTrap_0(c)}\bigr) \subset \bigcup_{u \in A_N^k} f_u\bigl(\PTrap_2(c)\bigr) = \PTrap_{k+2}(c).
	\]
\end{proof}

\subsection{Openness of finite-depth capture}
\begin{lemma}\label{lem:topology-prep}
	\leavevmode
	\begin{enumerate}[label=(\roman*), leftmargin=*]
		\item The map $c\mapsto E(c,N)$ is continuous on $\C\setminus\D$ in the Hausdorff metric.
		\item For each $k\ge0$, $\Theta_k(n)$ is open in $\Xn\setminus\R$. Consequently, $\Theta_n$ is open in $\Xn\setminus\R$.
	\end{enumerate}
\end{lemma}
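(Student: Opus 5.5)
For part (i) we use the standard contraction estimate for IFS attractors, made explicit with the series representation \eqref{eq:series}. For $c,c'$ in a compact subset $K\subset\C\setminus\D$ we have $|c|,|c'|\ge r>1$ on $K$. Given a point $z=\sum_k a_k c^{-k}\in E(c,N)$, pair it with $z'=\sum_k a_k c'^{-k}\in E(c',N)$, which uses the same address. Then
\[
|z-z'|\le (N-1)\sum_{k\ge1}|c^{-k}-c'^{-k}|\le (N-1)\,|c-c'|\sum_{k\ge1}k\,r^{-k-1}.
\]
Here we use $|c^{-k}-c'^{-k}|\le k\,r^{-k-1}|c-c'|$ from the mean value inequality along the segment; if the segment leaves $\{|z|\ge r\}$, we work locally in small balls instead. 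The bound is symmetric in $c,c'$, so the Hausdorff distance is at most $C_K|c-c'|$. This gives local Lipschitz continuity, hence continuity. Alternatively, one can cite Hutchinson's theorem on continuous dependence of the fixed point of a uniformly contracting family, as already done in \Cref{lem:Mn-closed-prelim}.

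For part (ii), fix $k$ and write $\Theta_k(n)=\bigcup_{u\in A_N^{\le k}}\mathcal U_u$, where
\[
\mathcal U_u:=\{c\in\Xn\setminus\R:\ g_u(2c)\in\PTrap(c,N)\},
\]
exactly as in the correctness proof of \Cref{alg:geom}. Each $\mathcal U_u$ is open for the following reasons. The map $c\mapsto g_u(2c)=2cP_u(c)$ is a polynomial in $c$ by \Cref{prop:dyn-poly}. The functionals $\ls(\cdot;c)$ and $\lv$ depend continuously on $c$ for $y\ne0$, since $\ls(z;c)=\Imag(cz)/|c|$. The half-widths $\mathcal S(c,N)=N|y|/\rho$ and $\mathcal V(c,N)=(N-2|x|)|y|/\rho^2$ are continuous on $\C\setminus(\D\cup\R)$. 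Membership is therefore given by the two strict inequalities
\[
|\ls(g_u(2c);c)|<\mathcal S(c,N),\qquad |\lv(g_u(2c))|<\mathcal V(c,N),
\]
between continuous functions on the open set $\Xn\setminus\R$. This makes $\mathcal U_u$ relatively open, indeed open in $\C$. A finite union of open sets is open, so $\Theta_k(n)$ is open, and $\Theta_n=\bigcup_k\Theta_k(n)$ is open as a countable union.

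The only step needing care is the joint continuity in (i): the Lipschitz constant must be uniform near a given $c_0$. This is handled by restricting to a closed ball around $c_0$ that stays inside $\{|c|\ge r\}$ for some $r>1$. That ball is convex, so the mean value bound applies directly. Part (ii) does not in fact need (i); it uses only the explicit formulas for the trap.
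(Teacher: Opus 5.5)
Your proposal is correct and follows the paper's proof. Part (ii) is exactly the paper's argument: each $\mathcal U_u$ is cut out by two strict inequalities between continuous functions of $c$, and $\Theta_k(n)$ is a finite union of such sets. For part (i) the paper only cites Hutchinson, whereas you verify it directly by the same-address coupling, which gives a valid explicit local bound $d_H\bigl(E(c,N),E(c',N)\bigr)\le \frac{N-1}{(r-1)^2}\,|c-c'|$ on closed balls contained in $\{|c|\ge r\}$, $r>1$.
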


\begin{proof}
	(i) is a standard property of IFS attractors~\cite{Hutchinson1981FractalsMeasures}.

	For (ii), define for each word $u$
	\[
		\mathcal U_u:=\{\,c\in\Xn\setminus\R:\ g_u(2c)\in\PTrap_0(c)\,\}.
	\]
	On $\Xn\setminus\R$, the maps $c\mapsto g_u(2c)$, $c\mapsto\mathcal V(c,N)$, $c\mapsto\mathcal S(c,N)$, and the functionals $\lv$, $\ls(\cdot;c)$ are continuous, and membership in $\PTrap_0(c)$ is determined by strict inequalities. Hence each $\mathcal U_u$ is open. By \Cref{def:levels,prop:duality},
	\[
		\Theta_k(n)=\bigcup_{|u|\le k}\mathcal U_u.
	\]
	Thus $\Theta_k(n)$ is open as a finite union of open sets, and consequently $\Theta_n=\bigcup_{k\ge0}\Theta_k(n)$ is open as well.
\end{proof}

The openness statement from \Cref{lem:topology-prep} and the dynamical buffer from \Cref{prop:two-step-buffer} now combine to prove \Cref{thm:bounded-delay-closure}.

\subsection{Proof of the two-step closure theorem}
\begin{proof}[Proof of \Cref{thm:bounded-delay-closure}]
	Let $c\in \overline{\Theta_k(n)}\cap(\Xn\setminus\R)$, and choose a sequence $c_m\in\Theta_k(n)$ with $c_m\to c$ in the ambient space $\C\setminus\D$.

	For each $m$, \Cref{def:levels} gives a word $u_m\in A_N^{\le k}$ such that $g_{u_m}(2c_m)\in \PTrap_0(c_m)$. Since $A_N^{\le k}$ is finite, at least one word occurs infinitely often. Passing to a subsequence, we may therefore assume that $u_m=u$ is constant.

	The map $c\mapsto g_u(2c)$ is continuous on $\C\setminus\D$, and the trap half-widths are continuous on $\Xn\setminus\R$. Passing to the limit in the defining strict inequalities for $\PTrap_0(c_m)$ yields
	\[
		g_u(2c)\in \overline{\PTrap_0(c)}.
	\]
	By \Cref{prop:two-step-buffer}, $\overline{\PTrap_0(c)}\subset \PTrap_2(c)$, so $g_u(2c)\in \PTrap_2(c)$. Since
	\[
		\PTrap_2(c)=\Hutch_c^2\bigl(\PTrap_0(c)\bigr)=\bigcup_{w\in A_N^2} f_w\bigl(\PTrap_0(c)\bigr),
	\]
	there exists $w\in A_N^2$ such that $g_u(2c)\in f_w(\PTrap_0(c))$. Equivalently,
	\[
		g_w\bigl(g_u(2c)\bigr)=g_{uw}(2c)\in \PTrap_0(c).
	\]
	Since $|u|\le k$, the concatenated word $uw$ has length at most $k+2$. Therefore \Cref{def:levels} gives $c\in \Theta_{k+2}(n)$, proving $\overline{\Theta_k(n)}\cap(\Xn\setminus\R)\subset \Theta_{k+2}(n)$.
\end{proof}

\begin{remark}[Meaning of the two-step delay]
  Inside $\Xn\setminus\R$, finite-depth boundary points are uniformly controlled:
  if $c\in\partial\Theta_k(n)\cap(\Xn\setminus\R)$,
  then $c\in \Theta_{k+2}(n)$ by \Cref{thm:bounded-delay-closure}.
  In particular, every such finite-depth boundary point already has a finite trap witness, at most two levels deeper.
\end{remark}

\subsection{Semialgebraic charts and finite-depth boundary geometry.}
Beyond the closure statement, finite-depth capture sets also admit explicit local descriptions in parameter space. The next lemma records this semialgebraic structure on each open quadrant.

\begin{lemma}[Atlas charts are semialgebraic on quadrants]\label{lem:atlas-semialgebraic-charts}
	Fix a word $u$ and an open quadrant $Q$. Then $\mathcal U_u\cap Q$ is an open semialgebraic subset of $Q$.
	Consequently, for each $k\ge 0$, $\Theta_k(n)\cap Q$ is a finite union of open semialgebraic sets.
\end{lemma}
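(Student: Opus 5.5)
The plan is to write $\mathcal U_u\cap Q$ explicitly as the solution set of finitely many strict polynomial inequalities in $(x,y)$, intersected with polynomial conditions defining $Q$, $\Xn$ and $\C\setminus\D$. On an open quadrant $Q$ the signs of $x$ and $y$ are fixed. This removes every absolute value applied to $x$ or $y$ alone; the remaining absolute values act on polynomial expressions and will be handled by squaring.

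\emph{Lens and ambient conditions.} On $Q$, the condition $c\in\Xn$ reads $x^2+y^2+2\sigma x<N$, where $\sigma=\sgn x$ is constant on $Q$. The condition $|c|>1$ reads $x^2+y^2>1$. Both are strict polynomial inequalities.

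\emph{The iterate.} By \Cref{prop:dyn-poly}, $g_u(2c)=2c\,P_u(c)$ with $P_u\in\Z[z]$. Writing $c=x+iy$, we get $\Real g_u(2c)=p(x,y)$ and $\Imag g_u(2c)=q(x,y)$ with $p,q\in\Z[x,y]$.

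\emph{Trap inequalities.} Use \Cref{def:functionals-canonical} and \Cref{def:canonical-trap}. The condition $|\lv(g_u(2c))|<\mathcal V(c,N)$ becomes $|q|\rho^2<(N-2\sigma x)\,\sigma' y$, with $\sigma'=\sgn y$ and $\rho^2=x^2+y^2$. The right side is positive on the lens, since $N-2|x|>\rho^2>0$. Hence the condition is equivalent to the polynomial inequality
\[
q^2\rho^4<(N-2\sigma x)^2y^2
\]
together with $N-2\sigma x>0$, which is already implied by the lens condition.

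The slanted condition $|\ls(g_u(2c);c)|<\mathcal S(c,N)$ reads $|yp+xq|/\rho<N|y|/\rho$. The factor $\rho>0$ cancels, so it is equivalent to
\[
(yp+xq)^2<N^2y^2.
\]
The only non-polynomial quantity, $\rho$, disappears in both conditions, either by cancellation or by squaring.

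\emph{Conclusion.} $\mathcal U_u\cap Q$ is a finite intersection of sets $\{h>0\}$ with $h$ polynomial, so it is a basic open semialgebraic set. It is open because each defining set is open, which also agrees with \Cref{lem:topology-prep}.

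For the consequence, \Cref{lem:topology-prep} gives $\Theta_k(n)=\bigcup_{|u|\le k}\mathcal U_u$. This is a finite union, since $A_N^{\le k}$ is finite. Intersecting with $Q$ gives a finite union of open semialgebraic sets.

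The only delicate step is making sure the squaring is legitimate, i.e.\ that both sides of each inequality are nonnegative before squaring. This is exactly where fixing the quadrant and using $N-2|x|>0$ on the lens are needed.
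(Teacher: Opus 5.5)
Your proof is correct and takes essentially the same approach as the paper. You fix the signs of $x$ and $y$ on $Q$, use \Cref{prop:dyn-poly} to make $g_u(2c)$ polynomial in $(x,y)$, and clear the positive factors $\rho$ and $\rho^2$ and square to obtain finitely many strict polynomial inequalities; the finite union over $|u|\le k$ then gives the consequence. You are slightly more explicit than the paper in writing out the lens and $|c|>1$ conditions and in checking that both sides are nonnegative before squaring.
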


\begin{proof}
	By \Cref{prop:dyn-poly}, $z_u(c):=g_u(2c)$ is polynomial in $c$, hence its real and imaginary parts are polynomial in $(x,y)$.
	The condition $z_u(c)\in\PTrap_0(c)$ is equivalent to
	\[
		|\lv(z_u(c))|<\mathcal V(c,N),
		\qquad
		|\ls(z_u(c);c)|<\mathcal S(c,N).
	\]
	On a fixed open quadrant $Q$, the signs of $x$ and $y$ are fixed, so $|x|$ and $|y|$ are linear in $(x,y)$ with fixed sign choices.
	Also $x^2+y^2=|c|^2$ is strictly positive on $\Xn\setminus\R$.
	After squaring and clearing these positive denominators, membership in $\mathcal U_u\cap Q$
	is described by finitely many strict polynomial inequalities in $(x,y)$.
	Thus $\mathcal U_u\cap Q$ is open semialgebraic.

	Finally, by \Cref{def:levels,prop:duality} one has $\Theta_k(n)=\bigcup_{|u|\le k}\mathcal U_u$,
	so intersecting with $Q$ gives a finite union of such charts.
\end{proof}

Computed examples show that from depth $k=3$ onward the inner boundaries $\partial\Theta_k(n)$ form closed loops and the annular region $\Xi_k(n)\setminus\Theta_k(n)$ splits into several components; see \Cref{fig:M13hole}. A comprehensive analysis of this hole-formation mechanism, including the derivation of the infinitely-many-holes result via renormalized loops, is presented in the companion work~\cite{E26holes}.

\begin{figure}[!htbp]
	\centering
	\MaybeIncludeGraphic[width=\linewidth,height=\textheight,keepaspectratio]{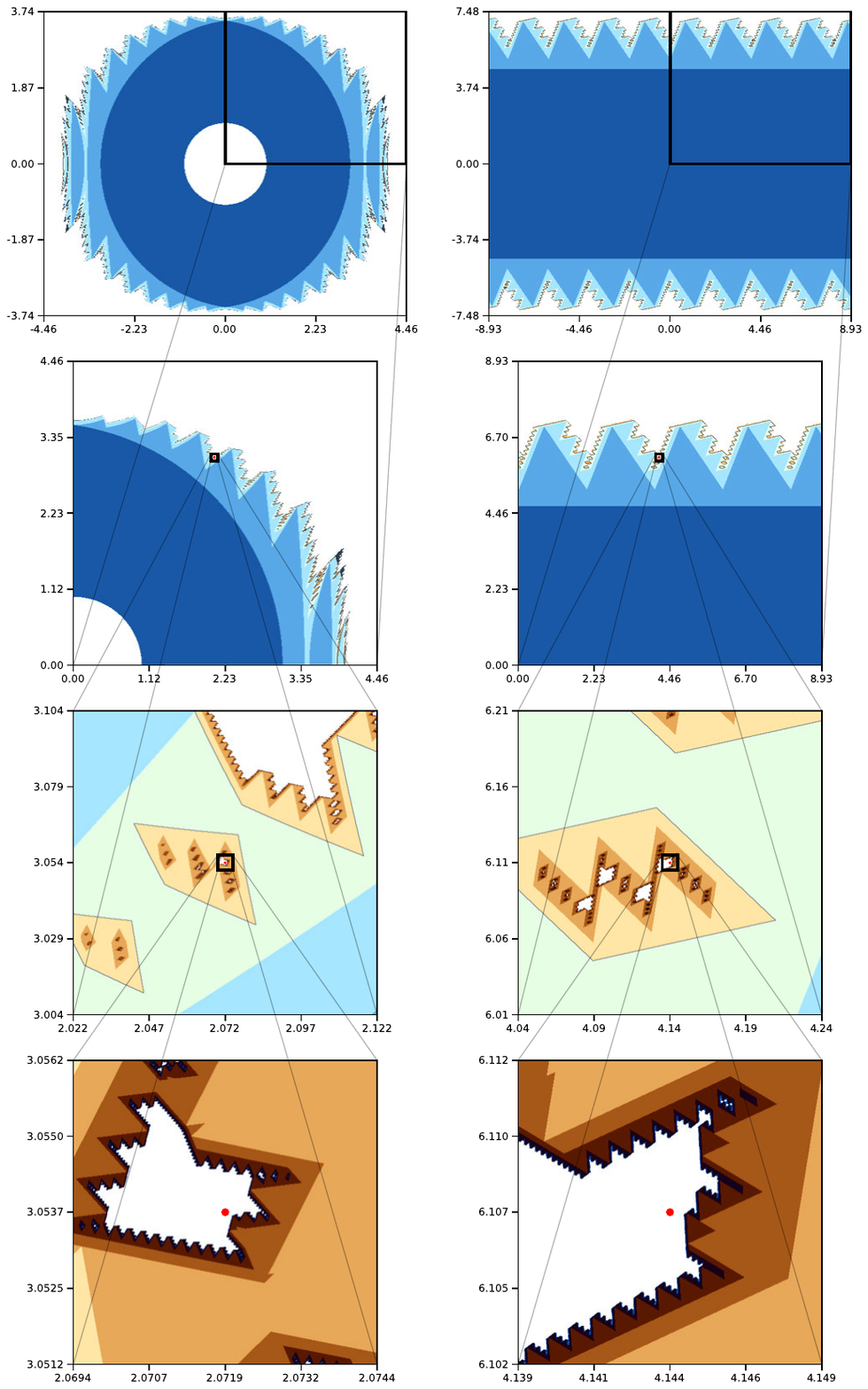}{Finite-capture filtration for $n=13$ and the depth-$3$ inner loop geometry.}
	\caption{Finite-capture filtration for $n=13$ ($N=25$).
		Left: magnifications of $\mathcal M_{13}$ centered at $c$. The depth-$3$ boundary of $\Theta_3(13)$ forms a four-curve inner loop enclosing the parameter $c$ and its associated hole.
		Right: corresponding dynamical magnifications of $E(c,25)$ centered at $2c$.}
	\label{fig:M13hole}
\end{figure}

\section{Lens-local closure and the sharp threshold}\label{sec:global}

We return to the root problem and use finite capture as a stable substitute for exact finite hits of $0$. First we identify $\overline{\Rn}$ inside the lens; then we globalize the description through the sharp threshold.

\subsection{Restricted roots yield finite capture}
Restricted roots give the finite witnesses needed to pass from exact hits of $0$ to the closure statement.

\begin{lemma}[Restricted polynomials give trap witnesses]\label{lem:roots-imply-trap}
	Let $c\in\Rn\cap(\Xn\setminus\R)$. Then $c\in\Theta_n$.
	More precisely, if
	\[
		P(z)=z^m+d_1z^{m-1}+\cdots+d_m\in\Dn[z]
		\quad\text{and}\quad P(c)=0,
	\]
	then there exists a word $u\in A_N^m$ such that $g_u(2c)=0\in\PTrap_0(c)$, hence $k(c)\le m$.
\end{lemma}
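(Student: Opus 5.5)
The plan is to combine \Cref{prop:dyn-poly} with the observation that $0$ lies in the canonical trap. Given $P(z)=z^m+d_1z^{m-1}+\cdots+d_m$ with $d_j\in\Dn$ and $P(c)=0$, I will build a word whose associated polynomial in \Cref{prop:dyn-poly} is exactly $P$. That proposition says that for $u=t_1\cdots t_m$ with $t_j=2e_j$ one has $g_u(2c)=2c\,P_u(c)$, where $P_u(z)=z^m-e_1z^{m-1}-\cdots-e_m$. So I choose $e_j:=-d_j$, which lies in $\Dn$ because $\Dn$ is symmetric. Then $t_j:=-2d_j\in 2\Dn=A_N$, and $P_u=P$.

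With this word, $g_u(2c)=2c\,P(c)=0$. Before using that, I will double-check the sign convention by rerunning the recursion $P_u(z)=zP_{u'}(z)-e_k$ from the proof of \Cref{prop:dyn-poly}; only the choice $e_j=-d_j$ depends on it.

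Next I show $0\in\PTrap_0(c)=\PTrap(c,N)$ for $c\in\Xn\setminus\R$. The canonical coordinates of $0$ are $(s,v)=(0,0)$, so membership reduces to $\mathcal S(c,N)>0$ and $\mathcal V(c,N)>0$. The first holds because $y\ne0$. For the second, \Cref{cor:canonical-validity}(i) gives $\mathcal V(c,N)>0$ exactly when $2|x|<N$, and this follows from $\rho^2+2|x|<N$, the defining inequality of $\Xn$.

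It follows that $g_u(2c)\in\PTrap(c,N)$ with $|u|=m$. By \Cref{def:levels}, $c\in\Theta_m(n)\subset\Theta_n$ and $k(c)\le m$. One degenerate case needs a remark: $m=0$ cannot occur, since then $P\equiv1$ has no roots, so $m\ge1$ throughout.

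I expect no real obstacle. The only points that need care are the sign bookkeeping between $P$ and $P_u$ and making sure $0$ lies in the open trap rather than only in its closure; both are settled by the two checks above.
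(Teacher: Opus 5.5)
Your proposal is correct and matches the paper's proof: the paper also takes $t_j=2(-d_j)$, applies \Cref{prop:dyn-poly} to get $g_u(2c)=2c\,P(c)=0$, and concludes from the fact that the open canonical parallelogram $\PTrap_0(c)$ is centered at $0$. Your check that $\mathcal S(c,N)>0$ and $\mathcal V(c,N)>0$ on $\Xn\setminus\R$ spells out the nonemptiness that the paper leaves implicit.
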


\begin{proof}
	Write $t_j:=2(-d_j)\in A_N$ and $u:=t_1\cdots t_m\in A_N^m$. By \Cref{prop:dyn-poly},
	$g_u(2c)=2c\,P(c)=0$. Since $\PTrap_0(c)$ is an open canonical parallelogram centered at $0$,
	we have $0\in\PTrap_0(c)$, so $g_u(2c)\in\PTrap_0(c)$. By \Cref{def:levels}, this implies
	$k(c)\le m$ and therefore $c\in\Theta_n$.
\end{proof}

This supplies the approximation input for \Cref{thm:finite-capture-non-real}.

\subsection{Non-real closure in the lens}
We now prove the lens-local description using the closure identity $\Mn=\overline{\Rn}\setminus\D$ and the finite trap witnesses provided by \Cref{lem:roots-imply-trap}.

\begin{proof}[Proof of \Cref{thm:finite-capture-non-real}]
	Since $\Theta_n\subset\Mn$ by \Cref{prop:Theta-Xi-bounds} and $\Mn$ is closed in the ambient space $\C\setminus\D$ by \Cref{lem:Mn-closed-prelim}, we have
	\[
		\closure{\Theta_n}\cap(\Xn\setminus\R)\subset \Mn\cap(\Xn\setminus\R).
	\]

	For the reverse inclusion, let $c\in\Mn\cap(\Xn\setminus\R)$. By \Cref{prop:Mn-closure-Rn} there exists a sequence $c_m\in\Rn$ with $c_m\to c$. Since $\Xn\setminus\R$ is open, we have $c_m\in\Xn\setminus\R$ for all sufficiently large $m$, and then \Cref{lem:roots-imply-trap} gives $c_m\in\Theta_n$. Hence $c\in\closure{\Theta_n}$.
\end{proof}

\subsection{Completion by the real trace}
The non-real statement extends to the full lens-local description by adjoining the real boundary trace and approximating real interior points from the non-real side.

\begin{proof}[Proof of \Cref{cor:lens-real-trace}]
	By \Cref{thm:finite-capture-non-real}, the identity already holds on $(\Xn\cap\C\setminus\R)$. Moreover, $\Theta_n\subset\Mn$ by \Cref{prop:Theta-Xi-bounds}, so $\closure{\Theta_n}\cap\Xn\subset\Mn\cap\Xn$ because $\Mn$ is closed in $\C\setminus\D$. Also $\partial\Mn\cap\R\subset\Mn$. Therefore
	\[
		\bigl(\closure{\Theta_n}\cup(\partial\Mn\cap\R)\bigr)\cap\Xn\subset \Mn\cap\Xn.
	\]

	For the reverse inclusion, let $c\in\Mn\cap\Xn$. If $c\notin\R$, then \Cref{thm:finite-capture-non-real} gives $c\in\closure{\Theta_n}$. If instead $c\in\R$, then either $c\in\partial\Mn\cap\R$, in which case there is nothing to prove, or $c\in\interior(\Mn)\cap\R$. In the latter case, because both $\interior(\Mn)$ and $\Xn$ are open in $\C\setminus\D$, there exists $r>0$ such that
	\[
		B(c,r)\subset \interior(\Mn)\cap\Xn.
	\]
	Choose any sequence $c_j\in B(c,r)\cap(\C\setminus(\D\cup\R))$ with $c_j\to c$. Each $c_j$ is non-real and lies in $\Mn\cap\Xn$, so \Cref{thm:finite-capture-non-real} yields $c_j\in\closure{\Theta_n}$ for all $j$. Since $\closure{\Theta_n}$ is closed in the ambient space $\C\setminus\D$, we conclude that $c\in\closure{\Theta_n}$.
\end{proof}

\subsection{The sharp lens threshold}
The sharp lens threshold globalizes the preceding lens-local statements. We first record the enclosure-based disk bound and the resulting lens consequence for $n\ge21$, and then state the threshold theorem whose remaining cases are proved in the appendix.

\begin{proposition}\label{prop:Mn-non-real-disk}
	For every $n\ge2$,
	\[
		\Mn\setminus\R \subset \{\,c\in\C:\ |c|<1+\sqrt{n-1}\,\}.
	\]
\end{proposition}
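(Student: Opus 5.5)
We use the reciprocal-series description: by \Cref{rem:DN-series}, $c\in\Mn$ with $|c|>1$ if and only if $1+\sum_{k\ge1}d_kc^{-k}=0$ for some $(d_k)\subset\Dn$. The plan is to show that for non-real $c$ with $|c|\ge 1+\sqrt{n-1}$ no such series can vanish. The naive bound $\bigl|\sum_{k\ge1}d_kc^{-k}\bigr|\le (n-1)/(\rho-1)$ only excludes $\rho>n$, so it is too weak. Instead we exploit the non-real hypothesis through the vertical functional, as in the enclosure.

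The key step is to write the series as a one-point condition in the difference attractor. By \Cref{prop:Mn_characterization}, $c\in\Mn$ forces $2c\in E(c,N)\subseteq\PEnc(c,N)$ (\Cref{prop:enc-exact}). In particular
\[
	2|y|=|\lv(2c)|\le \VE(c,N)=(N-1)\sum_{k\ge1}\rho^{-k}|\sin k\theta|.
\]
A crude estimate is not enough, so we iterate once more. Any address of $2c$ has first digit $t_1$, and $z_1=g_{t_1}(2c)=c(2c-t_1)$ must again lie in $\PEnc(c,N)$. By \Cref{lem:dynamics} the vertical coordinate of $z_1$ is $\rho\,\ls(2c;c)-yt_1=y(2\rho^2/\rho\cdot\ldots)$. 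More concretely, $\Imag(c(2c-t_1))=\Imag(2c^2)-yt_1=y(4x-t_1)$.

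So a cleaner route is to pull out one factor of $y$ uniformly. Writing $c^{-k}=\rho^{-k}e^{-ik\theta}$, we use $|\sin k\theta|\le k|\sin\theta|$, exactly as in the proof of \Cref{thm:uniform-branching-cap}. This gives
\[
	\VE(c,N)/|y|<(N-1)/(\rho-1)^2=2(n-1)/(\rho-1)^2.
\]
Combined with $2|y|\le\VE$, this yields $(\rho-1)^2<n-1$, that is, $\rho<1+\sqrt{n-1}$, which is exactly the claim. The strictness comes from the strict inequality $|\sin 2\theta|<2|\sin\theta|$ for $\theta\notin\pi\Z$, already noted in that proof.

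To carry this out, I would do the following in order. First, recall the criterion $2c\in E(c,N)$ for $c\in\Mn\setminus\R$. Second, apply the exact support bound \eqref{eq:support-functional-E} with $\varphi=\lv$, noting that $\lv(2c)=2y$. Third, insert the Chebyshev-type bound $|\sin k\theta|\le k|\sin\theta|$ and sum $\sum k\rho^{-k}=\rho/(\rho-1)^2$. Fourth, divide by $|y|=\rho|\sin\theta|\neq0$. Finally, solve the resulting inequality for $\rho$.

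The only real obstacle is confirming that the constants match exactly, with no stray factor of $2$. The left side is $|\lv(2c)|=2|y|$, and the right side is $(N-1)|y|/(\rho-1)^2$ with $N-1=2(n-1)$. The factor $2$ cancels to give $(\rho-1)^2<n-1$. If the constants had not cooperated, I would have fallen back on a second backward step via \Cref{lem:dynamics}, but here they do cooperate.
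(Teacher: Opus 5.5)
Your argument is correct and is essentially the paper's proof: the vertical enclosure bound $2|y|=|\lv(2c)|\le\VE(c,N)$, combined with $|\sin k\theta|\le k|\sin\theta|$ (strict at $k=2$), gives $\VE(c,N)/|y|<2(n-1)/(\rho-1)^2$ and hence $(\rho-1)^2<n-1$. The paper simply runs this contrapositively, showing $\VE(c,N)<2|y|$ whenever $\rho\ge 1+\sqrt{n-1}$. The abandoned one-step detour via \Cref{lem:dynamics}, including its garbled intermediate expression, and the opening remark on reciprocal series play no role and can be deleted.
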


\begin{proof}
	Let $c=\rho e^{i\theta}\notin\R$ with $\rho>1$, and set $N=2n-1$. Assume for contradiction that $\rho\ge 1+\sqrt{n-1}$. We will show that $c\notin\Mn$.

	It is enough to prove that $\VE(c,N)<2|y|$. Indeed, since $|\lv(2c)|=2|y|$, this would imply $2c\notin\PEnc(c,N)$ by the vertical enclosure bound, hence $2c\notin E(c,N)$ by \Cref{prop:enc-exact}, and therefore $c\notin\Mn$ by \Cref{prop:Mn_characterization}.

	Using \eqref{eq:VE-series-Y} together with the estimate $|\sin(k\theta)|\le k|\sin\theta|$ for $k\ge1$, we obtain
	\[
		\VE(c,N)
		\le (N-1)|\sin\theta|\sum_{k=1}^{\infty}k\rho^{-k}
		= (N-1)|\sin\theta|\,\frac{\rho}{(\rho-1)^2}.
	\]
	The inequality is strict because $c\notin\R$, so $\theta\notin\pi\Z$, and the $k=2$ term satisfies $|\sin(2\theta)|=2|\sin\theta\cos\theta|<2|\sin\theta|$.

	Since $N-1=2(n-1)$ and $|y|=\rho|\sin\theta|$, we get
	\[
		\VE(c,N)
		< \frac{2(n-1)|y|}{(\rho-1)^2}
		\le 2|y|,
	\]
	where the final inequality uses $(\rho-1)^2\ge n-1$. Thus $|\lv(2c)|>\VE(c,N)$, so $2c$ lies outside the canonical enclosure. Consequently $2c\notin E(c,N)$, and hence $c\notin\Mn$.
\end{proof}

Combining \Cref{prop:Mn-non-real-disk} with the definition of the lens \Cref{def:lens} yields the
following immediate corollary, recovering \cite[Theorem~2]{EJSn24} for $n\ge21$.

\begin{corollary}\label{cor:Mn-non-real-lens}
	For every $n\ge21$,
	\[
		\Mn\setminus\R \subset \Xn.
	\]
\end{corollary}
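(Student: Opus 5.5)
The plan is to combine \Cref{prop:Mn-non-real-disk} with the elementary bound $|x|\le\rho$. This turns the lens inequality $\rho^2+2|x|<N$ into a one-variable inequality in $\rho=|c|$. Let $c=x+iy\in\Mn\setminus\R$. By definition $\Mn\subset\C\setminus\D$, so $\rho>1$ and the condition $c\in\C\setminus\D$ in \Cref{def:lens} is automatic. By \Cref{prop:Mn-non-real-disk} we have $\rho<1+\sqrt{n-1}$. It therefore remains to show that $\rho^2+2|x|<N=2n-1$.

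First, since $|x|\le\rho$,
\[
	\rho^2+2|x|\le \rho^2+2\rho=(\rho+1)^2-1.
\]
The right-hand side is increasing in $\rho>0$, so the strict radial bound gives
\[
	\rho^2+2|x|<\bigl(2+\sqrt{n-1}\bigr)^2-1=n+2+4\sqrt{n-1}.
\]
Second, I will check the numerical inequality
\[
	n+2+4\sqrt{n-1}\le 2n-1,
	\qquad\text{i.e.}\qquad
	n-3\ge 4\sqrt{n-1}.
\]
For $n\ge3$ both sides are nonnegative, so this is equivalent after squaring to $n^2-22n+25\ge0$. That quadratic holds exactly for $n\ge 11+\sqrt{96}\approx 20.80$, hence for every integer $n\ge21$. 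Combining the two steps gives $\rho^2+2|x|<N$, so $c\in\Xn$.

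The only delicate point is arithmetic bookkeeping: the threshold $21$ is exactly where the crude estimate $|x|\le\rho$ still suffices. One must keep the inequality from \Cref{prop:Mn-non-real-disk} strict and expand $(2+\sqrt{n-1})^2$ correctly. Improving to $n=20$ would require information beyond the disk bound, which is why that case is deferred to the appendix.
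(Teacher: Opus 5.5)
Your proof is correct and is essentially the paper's argument. The paper combines the disk bound $|c|<1+\sqrt{n-1}$ with the triangle inequality $|c\pm1|\le|c|+1$ and the numerical fact $1+\sqrt{n-1}<\sqrt{2n}-1$ for $n\ge21$. That is exactly your estimate $\rho^2+2|x|\le(\rho+1)^2-1$, written in the equivalent form $|c\pm1|^2\le(\rho+1)^2$, and it leads to the same quadratic threshold $n\ge 11+4\sqrt6\approx 20.80$.
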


\begin{proof}
	Let $c\in\Mn\setminus\R$. By \Cref{prop:Mn-non-real-disk}, $|c|<1+\sqrt{n-1}$. For $n\ge21$ one has $1+\sqrt{n-1}<\sqrt{2n}-1$ (equivalently, $n>11+4\sqrt6\approx 20.80$). Hence
	\[
		|c\pm1|\le |c|+1<\sqrt{2n}.
	\]
	Since $c\in\Mn\subset\C\setminus\D$, this is exactly the condition $c\in\Xn$.
\end{proof}

\begin{remark}
	\Cref{prop:Mn-non-real-disk} also appears as \cite[Prop.~4]{EJSn24}. The enclosure proof above
	isolates the geometric obstruction: as soon as $2c$ violates the vertical inequality
	$|\lv(z)|\le \VE(c,N)$, membership in $E(c,N)$ is impossible. Together with the elementary implication
	$c\notin\Xn \Rightarrow |c|\ge\sqrt{2n}-1$, this yields \Cref{cor:Mn-non-real-lens}.
\end{remark}

\begin{remark}
	The sharp threshold is $n=20$: for $n\ge20$ one has $\Mn\setminus\R\subset\Xn$, whereas for
	$2\le n\le19$ this inclusion fails. The proof splits into three ranges:
	$n\ge21$ is covered by \Cref{cor:Mn-non-real-lens}, the borderline case $n=20$ by
	\Cref{prop:M20-lens}, and the failure for $2\le n\le19$ by \Cref{prop:off-lens-nlt20}.
\end{remark}

We can now prove the sharp lens threshold stated in the introduction.

\begin{proof}[Proof of \Cref{thm:lens-regime}]
	For $n\ge21$, apply \Cref{cor:Mn-non-real-lens}. For $n=20$, use \Cref{prop:M20-lens}. For
	$2\le n\le19$, use \Cref{prop:off-lens-nlt20}.
\end{proof}

The restriction to the canonical lens is only essential for $2\le n\le 19$. Indeed, for
$n\ge20$, \Cref{thm:lens-regime} yields
\[
	\Mn\setminus\R \subseteq \Xn,
\]
so any statement proved on $(\Mn\cap\Xn)\setminus\R$ automatically extends to
$\Mn\setminus\R$.

\subsection{Global description for \texorpdfstring{$n\ge20$}{n>=20}}
We now combine the lens-local description with \Cref{thm:lens-regime}.

\begin{proof}[Proof of \Cref{cor:global-finite-capture-description}]
	Assume $n\ge20$. By \Cref{thm:lens-regime}, every non-real point of $\Mn$ lies in $\Xn$, so \Cref{thm:finite-capture-non-real} gives
	\[
		\Mn\setminus\R \subset \closure{\Theta_n}.
	\]
	If $c\in\Mn\cap\R$, then either $c\in\partial\Mn\cap\R$ or $c\in\interior(\Mn)\cap\R$. In the latter case, choose a sequence $c_j\in\interior(\Mn)\setminus\R$ with $c_j\to c$. Again \Cref{thm:lens-regime} gives $c_j\in\Xn$, so \Cref{thm:finite-capture-non-real} implies $c_j\in\closure{\Theta_n}$ for all $j$, and closedness yields $c\in\closure{\Theta_n}$.

	Hence
	\[
		\Mn\subset \closure{\Theta_n}\cup(\partial\Mn\cap\R).
	\]
	The reverse inclusion follows from $\Theta_n\subset\Mn$, the closedness of $\Mn$ in $\C\setminus\D$, and $\partial\Mn\cap\R\subset\Mn$.
\end{proof}

\begin{remark}\label{rem:finite-capture-non-real}
	\Cref{thm:finite-capture-non-real,cor:global-finite-capture-description} give
	\[
		(\Mn\cap\Xn)\setminus\R=\closure{\Theta_n}\cap(\Xn\setminus\R),
	\]
	and, for $n\ge20$,
	\[
		\Mn\setminus\R=\closure{\Theta_n}\setminus\R.
	\]
\end{remark}

\subsection{Density and interior topology}
We next identify finite capture as the appropriate model for the interior of $\Mn$.

\begin{corollary}
	\label{cor:density-trap-captured}
	For every $n\ge2$, the finite-capture locus $\Theta_n$ is open and dense in
	\[
		\bigl(\interior(\Mn)\cap\Xn\bigr)\setminus\R.
	\]
	If $n\ge20$, then $\Theta_n$ is open and dense in $\interior(\Mn)\setminus\R$.
	Consequently, for $n\ge20$,
	\[
		\closure{\Theta_n}=\closure{\interior(\Mn)}.
	\]
\end{corollary}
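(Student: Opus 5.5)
The plan has three steps: openness, density in the lens, and the consequence for closures when $n\ge20$.

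\emph{Openness.} By \Cref{lem:topology-prep}(ii), $\Theta_n$ is open in $\Xn\setminus\R$. Since $\Xn\setminus\R$ is open in $\C\setminus\D$, $\Theta_n$ is open in $\C\setminus\D$. By \Cref{prop:Theta-Xi-bounds}, $\Theta_n\subset\Mn$, and an open subset of $\Mn$ lies in $\interior(\Mn)$. Hence $\Theta_n\subset(\interior(\Mn)\cap\Xn)\setminus\R$, and it is open there.

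\emph{Density in the lens.} Let $c\in(\interior(\Mn)\cap\Xn)\setminus\R$ and let $W$ be any neighborhood of $c$. Shrink $W$ so that $W\subset(\interior(\Mn)\cap\Xn)\setminus\R$. Then $c\in(\Mn\cap\Xn)\setminus\R$, so \Cref{thm:finite-capture-non-real} gives $c\in\closure{\Theta_n}$. Therefore $W\cap\Theta_n\neq\varnothing$, which is density. One could instead argue directly: $c\in\overline{\Rn}$ by \Cref{prop:Mn-closure-Rn}, and roots near $c$ lie in $\Theta_n$ by \Cref{lem:roots-imply-trap}. Either route works.

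\emph{The case $n\ge20$.} \Cref{thm:lens-regime} gives $\Mn\setminus\R\subset\Xn$, so $\interior(\Mn)\setminus\R=(\interior(\Mn)\cap\Xn)\setminus\R$, and the first statement applies verbatim. For the closure identity we prove two inclusions.
\begin{itemize}
\item Since $\Theta_n\subset\interior(\Mn)$, we get $\closure{\Theta_n}\subset\closure{\interior(\Mn)}$.
\item For the reverse, $\closure{\interior(\Mn)}=\closure{\interior(\Mn)\setminus\R}$. Indeed, the real axis has empty interior, so every point of the open set $\interior(\Mn)$ is a limit of non-real points of $\interior(\Mn)$. By density, $\interior(\Mn)\setminus\R\subset\closure{\Theta_n}$, and taking closures finishes the argument.
\end{itemize}

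The main thing to check carefully is the topological bookkeeping: closures are taken in $\C\setminus\D$, and openness of $\Theta_n$ must be relative to the ambient space, not just to $\Xn\setminus\R$. The openness of $\Xn\setminus\R$ in $\C\setminus\D$ resolves this. No real obstacle is expected beyond invoking \Cref{thm:finite-capture-non-real} and \Cref{thm:lens-regime}.
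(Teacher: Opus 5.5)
Your proof is correct and follows essentially the same route as the paper. Openness comes from \Cref{lem:topology-prep} together with $\Theta_n\subset\Mn$, density from \Cref{thm:finite-capture-non-real}, and the $n\ge20$ case from \Cref{thm:lens-regime}; the reverse closure inclusion approximates points of $\interior(\Mn)$ by non-real points of $\interior(\Mn)$. Your explicit remark that $\Xn\setminus\R$ is open in $\C\setminus\D$, so that $\Theta_n$ is open in the ambient space and hence lies in $\interior(\Mn)$, spells out a step the paper leaves implicit.
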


\begin{proof}
	By \Cref{lem:topology-prep}(ii), the set $\Theta_n$ is open in $\Xn\setminus\R$. Since $\Theta_n\subset\Mn$ by \Cref{prop:Theta-Xi-bounds}, we have
	\[
		\Theta_n\subset \bigl(\interior(\Mn)\cap\Xn\bigr)\setminus\R.
	\]

	Now let $c\in \bigl(\interior(\Mn)\cap\Xn\bigr)\setminus\R$. By \Cref{thm:finite-capture-non-real}, $c\in\closure{\Theta_n}\cap(\Xn\setminus\R)$.
	Thus every point of $\bigl(\interior(\Mn)\cap\Xn\bigr)\setminus\R$ lies in the closure of $\Theta_n$. Because $\Theta_n$ is open, it is dense in that set.

	Assume now that $n\ge20$. By \Cref{thm:lens-regime},
		$\Mn\setminus\R\subset\Xn$,
	hence $\interior(\Mn)\setminus\R\subset\Xn$. The first part therefore shows that $\Theta_n$ is open and dense in $\interior(\Mn)\setminus\R$.

	Finally, still assuming $n\ge20$, the inclusion $\closure{\Theta_n}\subset \closure{\interior(\Mn)}$ is immediate from $\Theta_n\subset\interior(\Mn)$. For the reverse inclusion, let $c\in\closure{\interior(\Mn)}$ and choose $c_j\in\interior(\Mn)$ with $c_j\to c$. If necessary, perturb each real term $c_j$ slightly inside the open set $\interior(\Mn)$ so that $c_j\notin\R$ for all $j$. By the density statement just proved, $c_j\in\closure{\Theta_n}$ for every $j$. Since $\closure{\Theta_n}$ is closed in the ambient space $\C\setminus\D$, it follows that $c\in\closure{\Theta_n}$. Therefore $\closure{\Theta_n}=\closure{\interior(\Mn)}$.
\end{proof}

This density statement settles the generalized Bandt conjecture for $n\ge20$. For $n=2$, Bandt conjectured density of $\interior(\mathcal M_2)$ in $\mathcal M_2$; the lens-local statement was proved by Solomyak and Xu~\cite{SolomyakXu2003}, and the full case by Calegari--Koch--Walker~\cite{Calegari2017RootsConjecture}. For $n>2$, our earlier work established density in $\Mn\cap\Xn$~\cite{EJSn24}. Combined with \Cref{cor:global-finite-capture-description}, the present corollary yields the full statement for every $n\ge20$.

For the remaining cases $3\le n\le 19$, extending the density argument beyond the canonical lens requires a different off-lens geometry. In~\cite{E26ETDS} we achieve this by replacing the trap $\PTrap(c,N)$ with a smaller parallelogram adapted to the Minkowski-sum representation $E(c,N)=E(c,n)+E(c,n)$.

\section{Discussion and outlook}

The main point of the paper is that the fractal closure of a restricted root set can be accessed by finite dynamical certificates. Outside the unit disk the closure of $\Rn$ admits an exact dynamical description. On the lens $\Xn\setminus\R$ we construct a canonical trap and a canonical enclosure, define finite capture, and prove a two-step closure theorem. This gives an exact description of the non-real locus in the lens and, for $n\ge20$, a global description up to the real trace.

Two features are central. The first is the uniform delay-two phenomenon
\[
	\overline{\Theta_k(n)}\cap(\Xn\setminus\R)\subset \Theta_{k+2}(n),
\]
which shows that finite-depth boundaries are still governed by finite witnesses. The second is the sharp threshold at $n=20$: from that point onward the lens already contains the entire non-real connectedness locus, so the finite-capture description becomes global up to the real trace. In terms of roots, this yields a geometric description of $\overline{\Rn}\setminus\D$ through explicit dynamical traps and an intrinsic stratification by capture depth.

Several natural questions remain. One is to understand the real trace $\partial\Mn\cap\R$ more precisely from the algebraic viewpoint. Another is to describe the residual interior set $\interior(\Mn)\setminus\Theta_n$, namely the part of the closure not detected by finite capture itself. A third is to extend the present lens-based mechanism intrinsically to the off-lens region for $2\le n\le19$, where the sharp threshold fails but the root problem remains.

\section*{Funding}
This work was supported by the Ministerio de Ciencia, Innovaci\'on y Universidades, Agencia Estatal de Investigaci\'on, and FEDER, UE, under Grant PID2023-146424NB-I00; and by the Generalitat de Catalunya under Grant 2021 SGR 00113. The first author was also supported by the Universitat de Girona and Banco Santander Grant Programme for Researchers in Training (IFUdG 2022--2024).

\section*{Data availability statement}
No external datasets were used in this study. Interactive visualizations of $\Rn$, $\Mn$, and the associated attractors are available through the companion web application at \weblink{complextrees.com/collinear}{https://complextrees.com/collinear}.

\section*{Acknowledgments}
We thank Joan Salda\~na for helpful comments on an earlier draft. The first author also thanks Wolfram Research for computational resources that supported the development of this work.

\appendix

\section{Certified sharpness of the lens threshold}

The stratification developed here is formulated on the lens $\Xn\setminus\R$, where the canonical trap is nonempty and self-covering. To determine when this framework already describes the whole non-real locus $\Mn\setminus\R$, we must decide for which $n$ one has
$\Mn\setminus\R \subset \Xn$.

For $n\ge21$ this inclusion is established in \Cref{cor:Mn-non-real-lens}. We now consider the borderline case $n=20$ and show that the threshold is sharp: for every $2\le n\le19$ there are parameters in $\Mn\setminus\R$ lying outside the lens $\Xn$.
\subsection{The borderline case \texorpdfstring{$n=20$}{n=20}}

\Cref{fig:B2_n20} already indicates that $\mathcal M_{20}\setminus\R$ is contained entirely within the lens. The following result establishes this rigorously.

\begin{proposition}\label{prop:M20-lens}
	$\mathcal M_{20}\setminus\R \subset \mathcal X_{20}$.
	Equivalently, if $c=x+iy\notin\R$ and $\rho=|c|$, then
	\[
		c\in\mathcal M_{20}
		\quad\Longrightarrow\quad
		\rho^2+2|x|<39
		\quad\Longleftrightarrow\quad
		|c\pm 1|^2<40.
	\]
\end{proposition}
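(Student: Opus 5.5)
Fix $n=20$, $N=39$, and argue by contraposition. Assume $c=x+iy\notin\R$ with $\rho^2+2|x|\ge 39$; we show $c\notin\mathcal M_{20}$. By \Cref{rem:basic-symmetries} we may take $x\ge0$, $y>0$. By \Cref{prop:Mn-non-real-disk} we may also assume $\rho<1+\sqrt{19}\approx5.359$. The off-lens condition means $|c+1|^2\ge 40$, so $\rho\ge\sqrt{40}-1\approx5.325$. Hence the region to exclude is the thin crescent
\[
	K:=\{\,c:\ x\ge0,\ y>0,\ \rho^2+2x\ge39,\ \rho\le1+\sqrt{19}\,\},
\]
which is compact once we also handle the point where it meets the real axis. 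On $K$ we have $\rho\approx5.33$, which is well beyond $\sqrt{20}$.

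The main tool is the exterior certificate of \Cref{prop:depth-k-exterior-certificate}: we show $c\notin\Xi_k(20)$ for some fixed small $k$. We first try $k=0$, which needs $2c\notin\PEnc(c,39)$. The crude bound in \Cref{prop:Mn-non-real-disk} fails only barely, because it uses $|\sin k\theta|\le k|\sin\theta|$. So we sharpen it by keeping the first terms of $\VE(c,39)=38\sum_k\rho^{-k}|\sin k\theta|$ exactly and bounding only the tail. We then test $|\lv(2c)|=2y>\VE$ and $|\ls(2c;c)|=2|\Imag(c^2)|/\rho=4xy/\rho>\SE$.

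Where the angle $\theta$ is away from the real axis, $\sin2\theta$ and $\sin3\theta$ are much smaller than $2\sin\theta$ and $3\sin\theta$. There the vertical test should succeed directly, since $2/(\rho-1)^2\cdot 19\approx 1.01$ sits right at the margin. Near the real axis ($\theta$ small), the crude bound is asymptotically tight. There we instead use depth $k=1$ or $2$ with the vertical pruning of \Cref{prop:vertical-pruning}: by \Cref{thm:uniform-branching-cap} at most $3$ or $4$ digits survive. For each surviving digit $t$ we check that $g_t(2c)$ leaves $\PEnc$, typically through the slanted bound $\SE$. This uses \Cref{lem:dynamics} in $(s,v)$ coordinates.

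To make this rigorous on the continuum $K$, we subdivide $K$ into finitely many boxes in $(\rho,\theta)$. On each box we certify one of these finitely many polynomial/trigonometric strict inequalities by interval arithmetic. Monotonicity helps here: $\VE$ decreases in $\rho$, and explicit Lipschitz bounds control $\theta$. The tail $\sum_{k>K_0}$ is bounded by $38\rho^{-K_0}/(\rho-1)$.

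The main obstacle is the corner where $K$ approaches the real axis near $x\approx5.33$, $y\to0$. There all quantities scale like $y$, so we divide every inequality by $y$ and pass to the limit functions of $\rho$ and $\theta/\sin\theta$, getting a uniform margin that persists as $y\to0$. We expect the limit to reduce to the real-series comparison $19\rho/(\rho-1)^2\cdot(\dots)$ versus $\rho$. If depth $0$ is tight in that limit, we would use a two-step enclosure test there. The normalized inequalities are polynomial in $\cos\theta$ and $\rho$, so a verified check on a compact normalized domain suffices.
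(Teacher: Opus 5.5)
\textbf{Gap: the proposal is a plan for a computation, not a proof.} Your mechanism is the right one: an enclosure-based exterior certificate, showing $c\notin\Xi_k(20)$ via \Cref{prop:depth-k-exterior-certificate}. But the decisive inequality is never established, and the plan is built around a depth-$0$ step that cannot work on most of the region.

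On your crescent $K$ one has $x>(39-(1+\sqrt{19})^2)/2>5.14$ and $y<1.52$, so $\theta<0.29$ and $\cos\theta>0.95$ throughout. There is no part of $K$ ``away from the real axis''. The gain from $|\sin k\theta|<k|\sin\theta|$ is only second order: $\VE/y\approx 38/(\rho-1)^2-38\rho\theta^2/(\rho-1)^4$ to order $\theta^2$, while $38/(\rho-1)^2>2$ for every $\rho<1+\sqrt{19}$. So for each such $\rho$ the depth-$0$ vertical test fails for all small $\theta>0$. It therefore fails along the whole real edge $\sqrt{40}-1\le\rho\le1+\sqrt{19}$ of $K$ and on a large part of $K$ adjacent to it, not just at a corner.

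The depth-$0$ slanted test, $4x>38+\VE/y$, fails everywhere on $K$ because $4x<22$. The whole burden therefore falls on the depth-$1$ check. You leave that check to unspecified interval arithmetic, with hedges (``should succeed'', ``we expect'', ``if depth $0$ is tight we would\dots''). Nothing in the proposal shows that it closes with a margin that survives the normalization by $y$.

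\textbf{Missing idea: depth $1$ closes in closed form with the crude bound.} No sharpening of $\VE$ and no subdivision are needed.
\begin{itemize}
\item Put $\beta:=\VE(c,39)/y<38/(\rho-1)^2<19/8$, using $\rho\ge\sqrt{40}-1>5$.
\item If $c\in\mathcal M_{20}$, some child $z_1=g_t(2c)$ lies in $E(c,39)\subseteq\PEnc(c,39)$.
\item \Cref{lem:dynamics} gives $\lv(z_1)=y(4x-t)$ and $\ls(z_1;c)=-\tfrac{2y}{\rho}\bigl(\rho^2+x(t-4x)\bigr)$.
\item Divide by $y$; this is exactly the normalization you wanted near the real axis. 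The enclosure bounds become $|4x-t|\le\beta$ and $|\rho^2+x(t-4x)|\le(38+\beta)/2<323/16$.
\item Since $\rho^2+2x\ge39$ and $\rho<1+\sqrt{19}$, we get $41/8<x<43/8$. The first inequality then leaves only $t\in\{20,22\}$.
\item With $t\ge 20$, $\rho^2+x(t-4x)\ge 39+18x-4x^2>323/16$ on that interval, a contradiction.
\end{itemize}
This is the depth-$1$ certificate your plan was reaching for, obtained uniformly on $K$, and the real edge causes no difficulty.
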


\begin{proof}
	Set $n=20$ and $N=39$. Assume for contradiction that
	\[
		c=x+iy\in\mathcal M_{20}\setminus\mathcal X_{20},
		\qquad y\neq0.
	\]
	By \Cref{rem:basic-symmetries}, up to replacing $c$ by $\overline c$ and/or $-c$, we may assume $x>0$ and $y>0$.

	Write $\rho:=|c|$. By \Cref{prop:Mn-non-real-disk}, $\rho<1+\sqrt{19}$. Since $x>0$, one has $|c+1|>|c-1|$, so the condition $c\notin\mathcal X_{20}$ is equivalent to $|c+1|\ge\sqrt{40}$, that is, to $\rho^2+2x\ge39$. In particular,
	\[
		\rho\ge |c+1|-1\ge \sqrt{40}-1>5.
	\]

	Let $\VE=\VE(c,N)$ and $\SE=\SE(c,N)$, and set $\beta:=\frac{\VE}{y}$. As in the proof of \Cref{prop:Mn-non-real-disk},
	\[
		\beta<\frac{N-1}{(\rho-1)^2}<\frac{38}{16}=\frac{19}{8}.
	\]

	Because $c\in\mathcal M_{20}$, \Cref{prop:Mn_characterization} gives $2c\in E(c,N)$. Hence there exists $t\in A_N$ such that
	\[
		z_1:=g_t(2c)\in E(c,N)\subseteq \PEnc(c,N)
	\]
	by \Cref{prop:enc-exact}. Applying \Cref{lem:dynamics} at the root node $z_0=2c$, whose canonical coordinates are $\lv(2c)=2y$ and $\ls(2c;c)=4xy/\rho$, we obtain
	\[
		\lv(z_1)=y(4x-t),
	\]
	and
	\[
		\ls(z_1;c)
		=-\frac{2y}{\rho}\bigl(\rho^2+x(t-4x)\bigr).
	\]
	Since $z_1\in\PEnc(c,N)$,
	\[
		|\lv(z_1)|\le \VE,
		\qquad
		|\ls(z_1;c)|\le \SE.
	\]
	Therefore
	\begin{equation}\label{eq:n20-clean-vertical}
		|4x-t|\le \beta,
	\end{equation}
	and
	\begin{equation}\label{eq:n20-clean-slanted}
		\bigl|\rho^2+x(t-4x)\bigr|
		\le \frac{\rho\,\SE}{2y}
		= \frac{(N-1)+\beta}{2}
		< 19+\frac{19}{16}
		= \frac{323}{16},
	\end{equation}
	where we used \eqref{eq:SE-from-VE-def}.

	From $\rho^2+2x\ge39$ and $\rho<1+\sqrt{19}$ we deduce
	\[
		2x\ge 39-\rho^2 > 39-(1+\sqrt{19})^2 = 19-2\sqrt{19}>\frac{41}{4}.
	\]
	Also $x\le \rho<1+\sqrt{19}<\frac{43}{8}$. Hence
	\[
		\frac{41}{8}<x<\frac{43}{8},
		\qquad
		\frac{41}{2}<4x<\frac{43}{2}.
	\]
	Combining this with \eqref{eq:n20-clean-vertical} and $\beta<19/8<5/2$, we obtain
	\[
		t\in\{20,22\}.
	\]

	Finally, since $\rho^2+2x\ge39$ and $t\ge20$,
	\[
		\rho^2+x(t-4x)\ge 39-2x+x(t-4x)\ge 39+18x-4x^2=:q(x).
	\]
	For $x>9/4$, one has
	\[
		q'(x)=18-8x<0,
	\]
	so $q$ is decreasing on $(41/8,43/8)$. Therefore
	\[
		\rho^2+x(t-4x) > q(43/8)=\frac{323}{16},
	\]
	contradicting \eqref{eq:n20-clean-slanted}. This contradiction proves
	\[
		\mathcal M_{20}\setminus\R \subset \mathcal X_{20}.
	\]
\end{proof}

\Cref{fig:n20-q1-geometry} summarizes the geometry behind the bounds used in the proof above.

\begin{figure}[!htbp]
	\centering
	\begin{tabular}{@{}c@{}c@{}}
		\begin{tikzpicture}[x=1cm,y=1cm]
			\footnotesize
			\pgfmathsetmacro{\R}{sqrt(40)}
			\pgfmathsetmacro{\Rdisk}{1+sqrt(19)}
			\pgfmathsetmacro{\Qmax}{6.65}
			\pgfmathsetmacro{\xL}{41/8}
			\pgfmathsetmacro{\xU}{43/8}
			\pgfmathsetmacro{\xMid}{(\xL+\xU)/2}
			\pgfmathsetmacro{\xZm}{4.98}
			\pgfmathsetmacro{\xZM}{5.63}
			\pgfmathsetmacro{\yZm}{0.00}
			\pgfmathsetmacro{\yZM}{1.60}

			\tikzset{
				axis/.style={->,gray!75,line width=0.60pt},
				lensfill/.style={fill=gray!10},
				lensbdry/.style={draw=black,line width=1.00pt},
				diskbdry/.style={draw=gray!55,line width=0.90pt,dash pattern=on 2.2pt off 1.8pt},
				stripmid/.style={draw=gray!60,line width=0.55pt,dash pattern=on 1.2pt off 1.8pt},
				stripedge/.style={draw=black,line width=0.80pt,dash pattern=on 2.0pt off 1.6pt},
				badregion/.style={pattern=north east lines,pattern color=gray!55},
				zoomrect/.style={black,line width=0.95pt},
				note/.style={fill=white,draw=gray!55,rounded corners=1.2pt,inner sep=1.8pt},
				paneltag/.style={fill=white,draw=gray!55,rounded corners=1.2pt,inner sep=1.5pt,font=\bfseries}
			}

			\path[use as bounding box] (-0.35,-0.30) rectangle (\Qmax+0.65,\Qmax+0.75);

			\begin{scope}
				\clip (0,0) rectangle (6.75,\Qmax);

				\fill[lensfill] (-1,0) circle (\R);

				\fill[badregion,even odd rule] (0,0) circle (\Rdisk) (-1,0) circle (\R);

				\draw[stripmid] (\xMid,0) -- (\xMid,\Qmax);

				\draw[lensbdry] (-1,0) circle (\R);
				\draw[diskbdry] (0,0) circle (\Rdisk);

				\draw[stripedge] (\xL,0) -- (\xL,\Qmax);
				\draw[stripedge] (\xU,0) -- (\xU,\Qmax);

				\draw[zoomrect] (\xZm,\yZm) rectangle (\xZM,\yZM);

				\fill[white] (0,0) -- (1,0) arc (0:90:1) -- cycle;
				\draw[gray!60, dashed, line width=0.7pt] (1,0) arc (0:90:1);
			\end{scope}

			\draw[axis] (0,0) -- (7.0,0) node[below right] {$x$};
			\draw[axis] (0,0) -- (0,6.9) node[above left] {$y$};

			\node[note,anchor=west] at (1.65,6.05) {$\mathcal X_{20}:\ |c+1|<\sqrt{40}$};
			\node[note,anchor=west] at (0.35,3.95) {$|c|<1+\sqrt{19}$};

			\node[note,align=center] at (\xMid,\Qmax+0.38)
			{$\tfrac{41}{8}<x<\tfrac{43}{8}\ \Rightarrow\ t\in\{20,22\}$};

			\draw[gray!70,line width=0.6pt] (\xL,0) -- ++(0,0.09);
			\draw[gray!70,line width=0.6pt] (\xU,0) -- ++(0,0.09);
			\node[below, font=\scriptsize] at (\xL,0) {$\tfrac{41}{8}$};
			\node[below, font=\scriptsize, xshift=2pt] at (\xU,0) {$\tfrac{43}{8}$};

		\end{tikzpicture}
		 &
		\begin{tikzpicture}[x=1cm,y=1cm,scale=5.8]
			\footnotesize
			\pgfmathsetmacro{\R}{sqrt(40)}
			\pgfmathsetmacro{\Rdisk}{1+sqrt(19)}
			\pgfmathsetmacro{\xL}{41/8}
			\pgfmathsetmacro{\xU}{43/8}
			\pgfmathsetmacro{\xMid}{(\xL+\xU)/2}
			\pgfmathsetmacro{\xZm}{4.98}
			\pgfmathsetmacro{\xZM}{5.63}
			\pgfmathsetmacro{\yZm}{0.00}
			\pgfmathsetmacro{\yZM}{1.60}
			\pgfmathsetmacro{\xTtwenty}{5.0}
			\pgfmathsetmacro{\xTtwentytwo}{5.5}

			\tikzset{
				lensfill/.style={fill=gray!10},
				lensbdry/.style={draw=black,line width=0.95pt},
				diskbdry/.style={draw=gray!55,line width=0.85pt,dash pattern=on 2.2pt off 1.8pt},
				stripmid/.style={draw=gray!60,line width=0.50pt,dash pattern=on 1.1pt off 1.7pt},
				stripedge/.style={draw=black,line width=0.80pt,dash pattern=on 2.0pt off 1.6pt},
				badregion/.style={pattern=north east lines,pattern color=gray!55},
				tline/.style={draw=gray!65,line width=0.55pt,dotted},
				note/.style={fill=white,draw=gray!55,rounded corners=1.2pt,inner sep=1.5pt},
				paneltag/.style={fill=white,draw=gray!55,rounded corners=1.2pt,inner sep=1.5pt,font=\bfseries},
				callout/.style={fill=white,draw=gray!55,rounded corners=1.2pt,inner sep=2.2pt,align=left}
			}

			\path[use as bounding box] (\xZm-0.07,-0.11) rectangle (\xZM+0.55,\yZM+0.20);

			\begin{scope}
				\clip (\xZm,\yZm) rectangle (\xZM,\yZM);

				\fill[lensfill] (-1,0) circle (\R);

				\fill[badregion,even odd rule] (0,0) circle (\Rdisk) (-1,0) circle (\R);

				\draw[stripmid] (\xMid,\yZm) -- (\xMid,\yZM);

				\draw[lensbdry] (-1,0) circle (\R);
				\draw[diskbdry] (0,0) circle (\Rdisk);

				\draw[stripedge] (\xL,\yZm) -- (\xL,\yZM);
				\draw[stripedge] (\xU,\yZm) -- (\xU,\yZM);

				\foreach \xx/\tt in {\xTtwenty/20, \xTtwentytwo/22}{
						\draw[tline] (\xx,\yZm) -- (\xx,\yZM);
					}
			\end{scope}

			\node[note,anchor=north] at (\xTtwenty+0.025,\yZM-0.01) {\scriptsize $t=20$};
			\node[note,anchor=north] at (\xTtwentytwo+0.025,\yZM-0.01) {\scriptsize $t=22$};

			\node[anchor=north, font=\scriptsize] at (\xL,0) {$\tfrac{41}{8}$};
			\node[anchor=north, font=\scriptsize] at (\xU,0) {$\tfrac{43}{8}$};

			\node[callout,anchor=west] (badcall) at (\xZM+0.10,0.95)
			{$|c|<1+\sqrt{19}$\\[-1pt]$\sqrt{40}\le |c+1|$};
			\draw[-{Stealth[length=3mm]},black,line width=0.7pt]
			(badcall.west) .. controls +(-0.25,-0.05) and +(0.30,0.45) ..
			(\xZm+0.36,\yZm+0.20);

		\end{tikzpicture}
		\\
	\end{tabular}

	\caption{First-quadrant geometry for the $n=20$ argument.
		The lens is $\mathcal X_{20}$, given here by $|c+1|<\sqrt{40}$, and the dashed circle is the
		disk bound $|c|<1+\sqrt{19}$.
		The hatched set on the right is the portion of the disk lying outside the lens, namely
		$\{\sqrt{40}\le|c+1|\}\cap\{|c|<1+\sqrt{19}\}$.
		The vertical strip $\tfrac{41}{8}<x<\tfrac{43}{8}$ forces the even digit $t$ in
		$|4x-t|\le \beta<\tfrac{19}{8}$ to satisfy $t\in\{20,22\}$.}
	\label{fig:n20-q1-geometry}
\end{figure}
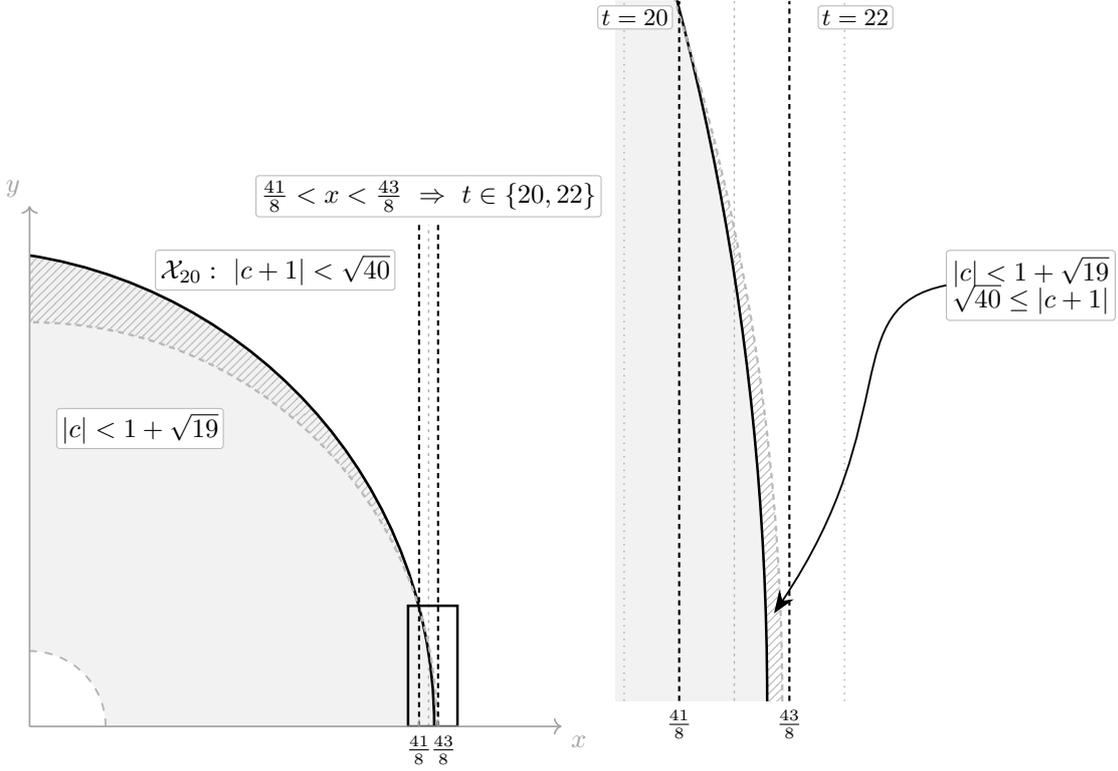

This establishes the positive side of the threshold. We now prove sharpness by constructing explicit non-real parameters in $\Mn\setminus\Xn$ for every $2\le n\le19$.

\subsection{Certified off-lens witnesses for \texorpdfstring{$2\le n\le 19$}{2<=n<=19}}
For each $n\in\{2,3,\dots,19\}$ we record a \emph{certificate tuple}
\[
	(P_n,\tilde c_n,r_n,\Delta_n,\Lambda_n),
\]
where $P_n\in\Dn[c]$ is monic, $\tilde c_n\in\C$ is a numerical center, and $r_n>0$ is a certification radius. The quantity
\[
	\Delta_n:=|P_n'(\tilde c_n)|\,r_n-|P_n(\tilde c_n)|-\mathcal R_{P_n}(\tilde c_n,r_n)
\]
is the Rouch\'e margin from \Cref{lem:rouche-disk-test}, while
\[
	\Lambda_n:=(|\tilde c_n+1|-r_n)^2-2n
\]
is the off-lens margin. If
	$\Delta_n>0,\quad \Lambda_n>0,\quad \Imag(\tilde c_n)>r_n$,
then the closed disk $B_n:=\overline{B(\tilde c_n,r_n)}$ contains exactly one root of $P_n$, this disk is disjoint from $\R$, and every point of $B_n$ lies outside the lens $\Xn$. Table \Cref{tab:off-lens-cert} lists one such certificate for each $2\le n\le19$.

\begin{lemma}[Rouch\'e disk test]\label{lem:rouche-disk-test}
Let $P(z)=\sum_{k=0}^d a_k z^k\in\C[z]$, let $z_0\in\C$, and let $r>0$. Define
\[
  \mathcal R_P(z_0,r):=\sum_{k=2}^d |a_k|
  \Big((|z_0|+r)^k-|z_0|^k-k|z_0|^{k-1}r\Big).
\]
If
\[
  |P(z_0)|+\mathcal R_P(z_0,r)<|P'(z_0)|\,r,
\]
then $P$ has exactly one zero in $\overline{B(z_0,r)}$, counted with multiplicity.
\end{lemma}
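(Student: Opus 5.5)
We compare $P$ with its linearization at $z_0$ and apply Rouch\'e's theorem on the circle $|z-z_0|=r$. Set $L(z):=P(z_0)+P'(z_0)(z-z_0)$. The hypothesis forces $P'(z_0)\neq0$, because the right-hand side $|P'(z_0)|\,r$ must be strictly positive. So $L$ is a degree-one polynomial with the single zero $z_0-P(z_0)/P'(z_0)$. Moreover $|P(z_0)|<|P'(z_0)|\,r$, so this zero lies in the open disk $B(z_0,r)$.

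The key estimate is a bound on the Taylor remainder $P(z)-L(z)$ on the closed disk. Write $P(z)-L(z)=\sum_{k=2}^d a_k\bigl(z^k-z_0^k-kz_0^{k-1}(z-z_0)\bigr)$ and set $h:=z-z_0$. Expand $z^k=(z_0+h)^k$ binomially. Then
\[
z^k-z_0^k-kz_0^{k-1}h=\sum_{j=2}^k\binom{k}{j}z_0^{k-j}h^j .
\]
Taking absolute values termwise with $|h|\le r$ gives
\[
\bigl|z^k-z_0^k-kz_0^{k-1}h\bigr|\le \sum_{j=2}^k\binom{k}{j}|z_0|^{k-j}r^j=(|z_0|+r)^k-|z_0|^k-k|z_0|^{k-1}r .
\]
Summing over $k$ with weights $|a_k|$ yields $|P(z)-L(z)|\le\mathcal R_P(z_0,r)$ for every $z\in\overline{B(z_0,r)}$.

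On the boundary $|z-z_0|=r$ we have $|L(z)|\ge |P'(z_0)|\,r-|P(z_0)|>\mathcal R_P(z_0,r)\ge|P(z)-L(z)|$. By Rouch\'e's theorem, $P$ and $L$ therefore have the same number of zeros in the open disk $B(z_0,r)$, namely exactly one, counted with multiplicity. The same strict inequality $|P(z)-L(z)|<|L(z)|$ on the circle gives $P\neq0$ there, so $P$ has no zero on the boundary. Hence $P$ has exactly one zero in the closed disk $\overline{B(z_0,r)}$.

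The argument has no real obstacle. The only points needing care are that the termwise majorization is what produces exactly the expression $\mathcal R_P$, and that the strict inequality excludes zeros on the boundary circle, which is what lets the conclusion be stated for the closed disk.
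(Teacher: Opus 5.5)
Your proof is correct and follows the paper's argument essentially verbatim: linearize $P$ at $z_0$, bound $|P-L|$ on the circle $|z-z_0|=r$ by $\mathcal R_P(z_0,r)$, apply Rouch\'e's theorem, and use the strict inequality to exclude zeros on the boundary so the count holds on the closed disk. Your explicit binomial majorization fills in the step that the paper simply attributes to ``Taylor's formula,'' and you also note that the hypothesis forces $P'(z_0)\neq0$.
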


\begin{proof}
Write
\[
  L(z):=P(z_0)+P'(z_0)(z-z_0).
\]
For $|z-z_0|=r$, Taylor's formula gives
  $|P(z)-L(z)|\le \mathcal R_P(z_0,r)$,
while
\[
  |L(z)|\ge |P'(z_0)|\,r-|P(z_0)|.
\]
Hence the stated hypothesis implies $|P-L|<|L|$ on $\partial B(z_0,r)$.
By Rouch\'e's theorem, $P$ and $L$ have the same number of zeros in $B(z_0,r)$,
counted with multiplicity. Since $L$ is affine, its unique zero is
\[
  z_0-\frac{P(z_0)}{P'(z_0)},
\]
and the hypothesis gives $|P(z_0)|<|P'(z_0)|\,r$, so this zero lies in $B(z_0,r)$. Therefore $P$ has exactly one zero in $B(z_0,r)$, counted with multiplicity. Moreover, the strict inequality $|P-L|<|L|$ on $\partial B(z_0,r)$ implies that neither $L$ nor $P$ vanishes on the boundary. Hence the same unique zero is the only zero in $\overline{B(z_0,r)}$.
\end{proof}

\begin{table}[htb]
	\centering
	\scriptsize
	\setlength{\tabcolsep}{3pt}
	\renewcommand{\arraystretch}{1.12}
	\begin{tabular}{r l l r r r}
		\toprule
		$n$                                & $P_n(c)$ (monic, $P_n\in\Dn[c]$)        & $\tilde c_n$ & $r_n$                & $\Delta_n$ & $\Lambda_n$ \\
		\midrule
		2                                  & $c^{3}-c^{2}+1$                         &
		$0.877438833123+0.744861766620\,i$ & $10^{-3}$                               & $0.00266846$ & $0.075557$                                      \\
		3                                  & $c^{3}-2c^{2}+2$                        &
		$1.419643377607+0.606290729207\,i$ & $10^{-3}$                               & $0.00282945$ & $0.217275$                                      \\
		4                                  & $c^{3}-3c^{2}+c+3$                      &
		$1.884646177119+0.589742805022\,i$ & $10^{-3}$                               & $0.00319771$ & $0.663093$                                      \\
		5                                  & $c^{3}-4c^{2}+3c+3$                     &
		$2.273409138442+0.563821092829\,i$ & $10^{-3}$                               & $0.00323211$ & $1.02646$                                       \\
		6                                  & $c^{3}-5c^{2}+5c+4$                     &
		$2.755773570847+0.474476778007\,i$ & $10^{-3}$                               & $0.00311967$ & $2.32339$                                       \\
		7                                  & $c^{3}-5c^{2}+5c+4$                     &
		$2.755773570847+0.474476778007\,i$ & $10^{-3}$                               & $0.00311967$ & $0.323393$                                      \\
		8                                  & $c^{3}-6c^{2}+7c+7$                     &
		$3.313682542356+0.421052806988\,i$ & $10^{-3}$                               & $0.00332164$ & $2.77648$                                       \\
		9                                  & $c^{3}-6c^{2}+7c+7$                     &
		$3.313682542356+0.421052806988\,i$ & $10^{-3}$                               & $0.00332164$ & $0.776475$                                      \\
		10                                 & $c^{3}-6c^{2}+8c+9$                     &
		$3.353263977249+1.222280727312\,i$ & $10^{-3}$                               & $0.0103477$  & $0.435835$                                      \\
		11                                 & $c^{3}-7c^{2}+10c+9$                    &
		$3.806735133791+0.423562585034\,i$ & $10^{-3}$                               & $0.00374313$ & $1.27446$                                       \\
		12                                 & $c^{3}-7c^{2}+10c+11$                   &
		$3.855311981586+0.784808055439\,i$ & $10^{-3}$                               & $0.00725306$ & $0.180142$                                      \\
		13                                 & $c^{3}-7c^{2}+12c+12$                   &
		$3.846271905939+1.591733658655\,i$ & $10^{-3}$                               & $0.0152924$  & $0.00976647$                                    \\
		14                                 & $c^{3}-8c^{2}+13c+13$                   &
		$4.342889763045+0.309577572123\,i$ & $10^{-3}$                               & $0.00309836$ & $0.631607$                                      \\
		15                                 & $c^{4}-8c^{3}+14\sum_{j=0}^{2}c^{j}$    &
		$4.459402595334+1.287852995168\,i$ & $10^{-3}$                               & $0.067323$   & $1.45242$                                       \\
		16                                 & $c^{4}-8c^{3}+15\sum_{j=0}^{2}c^{j}$    &
		$4.462920327187+1.637845297937\,i$ & $10^{-3}$                               & $0.0891869$  & $0.51463$                                       \\
		17                                 & $c^{8}-8c^{7}+16\sum_{j=0}^{6}c^{j}$    &
		$4.499982156746+1.936550004261\,i$ & $10^{-6}$                               & $0.065575$   & $1.80\times 10^{-5}$                            \\
		18                                 & $c^{4}-9c^{3}+17\sum_{j=0}^{2}c^{j}$    &
		$4.962660230936+0.943348614313\,i$ & $10^{-3}$                               & $0.0577094$  & $0.431151$                                      \\
		19                                 & $c^{12}-9c^{11}+18\sum_{j=0}^{10}c^{j}$ &
		$4.999999990671+1.414213647216\,i$ & $10^{-9}$                               & $0.0496746$  & $1.16\times 10^{-7}$                            \\
		\bottomrule
	\end{tabular}
	\caption{Certified data for off-lens witnesses $c_n\in\Mn\setminus\Xn$, $2\le n\le19$.
		Each row is a certificate tuple $(P_n,\tilde c_n,r_n,\Delta_n,\Lambda_n)$.
		The displayed values of $\Delta_n$ and $\Lambda_n$ are certified lower bounds.
		The inequalities $\Delta_n>0$, $\Lambda_n>0$, and $\Imag(\tilde c_n)>r_n$ imply, respectively,
		unique-root, off-lens, and non-real certification for the disk $B_n=\overline{B(\tilde c_n,r_n)}$.}
	\label{tab:off-lens-cert}
\end{table}

\begin{proposition}[Certified off-lens witnesses]\label{prop:off-lens-nlt20}
	For each $2\le n\le 19$ there exists $c_n\in\Mn\setminus\Xn$ with $c_n\notin\R$.
\end{proposition}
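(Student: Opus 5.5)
For each $2\le n\le 19$ we take the certificate tuple $(P_n,\tilde c_n,r_n,\Delta_n,\Lambda_n)$ from \Cref{tab:off-lens-cert} and let $c_n$ be the root of $P_n$ it certifies. The argument has three parts: locate a genuine root of $P_n$ near $\tilde c_n$, check that this root is in $\Mn$, and check that it is non-real and outside $\Xn$.

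\emph{Existence of the root.} Fix $n$. We apply \Cref{lem:rouche-disk-test} with $z_0=\tilde c_n$ and $r=r_n$. The condition $\Delta_n>0$ is exactly the hypothesis of that lemma. So $P_n$ has exactly one zero $c_n\in B_n=\overline{B(\tilde c_n,r_n)}$. To make $\Delta_n>0$ rigorous, we evaluate $P_n(\tilde c_n)$, $P_n'(\tilde c_n)$ and the remainder $\mathcal R_{P_n}(\tilde c_n,r_n)$ in interval (ball) arithmetic with outward rounding. We do the same for $\Lambda_n$. The table values are then certified lower bounds. This computational certification is the main obstacle, specifically for $n=17$ and $n=19$. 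There the off-lens margin is tiny ($\sim10^{-5}$ and $\sim10^{-7}$) and the radii are $10^{-6}$ and $10^{-9}$, so the centres and the polynomial evaluations need enough precision. This is especially delicate for the degree-$12$ polynomial with coefficient $18$, where cancellation in $P_n(\tilde c_n)$ is severe. We would run the checks in high-precision ball arithmetic, for example with $\tilde c_n$ stored to many more digits than displayed, and report the resulting rigorous enclosures.

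\emph{Membership in $\Mn$.} Each $P_n$ listed is monic and has non-leading coefficients in $\Dn$. For example, for $n=19$ the coefficients are $-9$ and $18$, both of absolute value at most $18=n-1$. For the repeated rows $n=7$ and $n=9$ we use $D_{n-1}\subset\Dn$. Hence $c_n\in\Rn$. The disk $B_n$ lies in $\C\setminus\D$, since $|\tilde c_n|-r_n>1$ in every row. So $c_n\in\Rn\setminus\D\subset\Mn$ by Step~1 of the proof of \Cref{prop:Mn-closure-Rn}.

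\emph{Non-real and off-lens.} From $\Imag(\tilde c_n)>r_n$ we get $\Imag(c_n)\ge \Imag(\tilde c_n)-r_n>0$, so $c_n\notin\R$. For every $c\in B_n$, $|c+1|\ge|\tilde c_n+1|-r_n$, and this is positive in every row. Together with $\Lambda_n>0$ this gives $|c+1|^2>2n$, hence $|c+1|\ge\sqrt{2n}$, so $c\notin\Xn$ by \Cref{def:lens}. Applying this to $c=c_n$ gives $c_n\in\Mn\setminus\Xn$ with $c_n\notin\R$, which proves the proposition for each $2\le n\le19$.
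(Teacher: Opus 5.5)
Your proposal is correct and follows the paper's proof essentially step for step. It uses the Rouch\'e disk test with $\Delta_n>0$ to obtain the root, $\Imag(\tilde c_n)>r_n$ for non-reality, $\Lambda_n>0$ to get $|c+1|\ge|\tilde c_n+1|-r_n>\sqrt{2n}$ on $B_n$, and $c_n\in\Rn\setminus\D\subset\Mn$ for membership. The only difference is cosmetic: you check $|c_n|>1$ row by row via $|\tilde c_n|-r_n>1$, whereas the paper obtains it uniformly from $|c_n|\ge|c_n+1|-1>\sqrt{2n}-1\ge1$, which needs no extra table check.
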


\begin{proof}
  Fix $n\in\{2,3,\dots,19\}$, and let $(P_n,\tilde c_n,r_n)$ be the corresponding data from
  \Cref{tab:off-lens-cert}. Let
  \[
    B_n:=\overline{B(\tilde c_n,r_n)}.
  \]

 By \Cref{lem:rouche-disk-test} and the certified inequality $\Delta_n>0$,
  the polynomial $P_n$ has exactly one zero in $B_n$; denote it by $c_n$.
  Because the certified inequality $\Imag(\tilde c_n)>r_n$ guarantees $B_n\cap\R=\varnothing$,
  this zero is non-real.

	Next, the inequality $\Lambda_n>0$ gives
	\[
		(|\tilde c_n+1|-r_n)^2>2n.
	\]
	Hence for every $c\in B_n$,
	\[
		|c+1|
		\ge |\tilde c_n+1|-|c-\tilde c_n|
		\ge |\tilde c_n+1|-r_n
		> \sqrt{2n}.
	\]
	Consequently, $c_n\notin\Xn$.

	Moreover,
	\[
		|c_n|
		\ge |c_n+1|-1
		> \sqrt{2n}-1
		\ge 1,
	\]
	and the inequality is strict because $|c_n+1|>\sqrt{2n}$. Thus $|c_n|>1$.

	Finally, $P_n$ is monic with coefficients in $\Dn$, so $c_n\in\Rn$ by definition. Since
	$c_n\in\Rn\setminus\D$, \Cref{prop:Mn-closure-Rn} yields $c_n\in\Mn$. Therefore
		$c_n\in\Mn\setminus\Xn$,
	as required.
\end{proof}

\medskip
\noindent
Combining \Cref{cor:Mn-non-real-lens} ($n\ge 21$), \Cref{prop:M20-lens} ($n=20$),
and \Cref{prop:off-lens-nlt20} ($2\le n\le 19$) yields the sharp dichotomy stated in
\Cref{thm:lens-regime}.

\end{document}